\documentclass[a4paper,11pt,reqno]{amsart}

%% IMPOSTAZIONI
\usepackage[utf8]{inputenc}
\usepackage[english]{babel}
\usepackage{amsmath}
\usepackage{amsfonts}
\usepackage{amsthm}
\usepackage{mathtools}
\usepackage{float}
\usepackage{placeins}
\usepackage{stmaryrd}
\usepackage{graphics}
\usepackage{graphicx}
\usepackage{subfig}
\usepackage{enumitem}
\usepackage{datetime}
\usepackage{bm}
\usepackage[colorlinks=false]{hyperref}
\usepackage{color}
%\usepackage[notref]{showkeys}
%\usepackage{refcheck}
%\setlength{\overfullrule}{5 pt} % per evidenziare gli overfull ***

%% MATEMATICA
\newcommand{\mc}{\mathcal}
\newcommand{\mf}{\mathfrak}
\newcommand{\eps}{\varepsilon}
\renewcommand{\d}{\,\mathrm{d}}
\newcommand{\tr}{\mathrm{tr}}
\renewcommand{\div}{\mathrm{div}\,}

\DeclareMathOperator*{\argmin}{argmin}

\def\N{\mathbb{N}}
\def\R{\mathbb{R}}
\def\C{\mathbb{C}}
\renewcommand{\to}{\rightarrow}

%% STRUTTURE
\numberwithin{equation}{section}
\newtheorem{thm}{Theorem}[section]
\newtheorem{defn}[thm]{Definition}
\newtheorem{prop}[thm]{Proposition}
\newtheorem{lemma}[thm]{Lemma}
\newtheorem{cor}[thm]{Corollary}

\theoremstyle{definition}
\begingroup
\newtheorem{rmk}[thm]{Remark}
\newtheorem{ex}[thm]{Example}
\endgroup

\theoremstyle{remark}

\oddsidemargin 5pt
\evensidemargin 5pt
\textwidth16.cm
\textheight22.5cm
\flushbottom

\begin{document}
	\author[F. Freddi and F. Riva]{Francesco Freddi and Filippo Riva}
	
	\title[Potential- and non potential-based cohesive models]{Potential-based  versus non potential-based cohesive models accounting for loading and unloading with application to sliding elastic laminates}
	
	\begin{abstract}
		A rigorous unified perspective of cohesive zone models is presented, including and comparing potential-based and non potential-based formulations, and encompassing known examples studied in literature. The main novelty of the work consists in the natural inclusion of loading and unloading effects in a general mixed-mode framework, incorporated through an intrinsic construction of energy densities or tensions. The proposed mathematical investigation identifies and proves the limitations of variational models with respect to non-variational ones, the latter yielding a feasible description of real instances in all relevant situations and regimes. This validates existing empirical and numerical observations. An application to a mechanical process of two elastic laminates sliding one on each other along their cohesive interface is finally analyzed, and existence results in both potential-based and non potential-based versions are obtained, extending previous contributions.
	\end{abstract}
	
	\maketitle
	
	{\small
		\keywords{\noindent {\bf Keywords:} cohesive interface, potential-based model, non potential-based model, loading and unloading regimes.
		}
		\par
		\subjclass{\noindent {\bf 2020 MSC:} 
			70G75,	%Variational methods for problems in mechanics
			74A45,	%Theories of fracture and damage
			74B20,	%Classical linear elasticity
			74K20.	%Plates. 
			
		}
	}

	\pagenumbering{arabic}
	
	\medskip
	
	\tableofcontents
	
	\section*{Introduction}
	
	Over the past two decades, variational mathematical methods have emerged as a powerful tool for studying failure phenomena in solids, driven by their intrinsic connection to the energetic nature of mechanical processes. These methods have gained significant interest not only in the mathematical community but also in engineering applications, particularly in fracture mechanics and related problems such as debonding and delamination. For comprehensive overviews of these approaches, we refer to \cite{BourFrancMar,FrancMar}. In this context, mechanical models that can be described by minimizing a suitable energy are often referred to as potential-based models in the engineering literature, while those characterized solely by equilibrium equations between strain and tension are termed non potential-based models. Mathematically, these are commonly distinguished as variational and non-variational models, respectively. Although potential-based models are frequently preferred in applications due to their simpler structure, they are not universally applicable and may fail to adequately capture certain phenomena, as highlighted in \cite{McGarry}.	
	
	Within fracture mechanics and delamination processes, models can be further categorized into cohesive and brittle frameworks. Cohesive models, pioneered by Barenblatt \cite{Barenblatt} and Dugdale \cite{Dugdale}, describe failure as a gradual process, in contrast to brittle models, which assume abrupt collapse. Brittle fracture, first energetically interpreted by Griffith \cite{Griffith}, occurs instantaneously when a critical threshold (often termed toughness) is exceeded. Cohesive models, on the other hand, are particularly effective in capturing the distinct regimes of loading and unloading, where a variable such as crack amplitude or displacement slip either increases or decreases. This is achieved through the introduction of an irreversible history variable, which tracks the system's past states and distinguishes between loading (when the variable increases) and unloading (when it remains constant). Similar approaches have been extended to model fatigue \cite{CrisLazzOrl}, where the history variable may also evolve during unloading phases.
	
	A key challenge in this field is the construction of cohesive energies and tension expressions that incorporate both current and history variables while maintaining physically feasible mechanical properties. While potential-based models involving only current variables--suitable for systems under monotone loading--are well-documented in the literature (see, e.g., \cite{ParkPaulino}), models that account for unloading regimes remain scarce and incomplete. For instance, in \cite{PPRthermo}, the history variable is treated as a damage parameter rather than being intrinsically embedded within the potential, limiting the model's applicability. Recent works such as \cite{BonCavFredRiva,Riv} have explored potentials incorporating unloading effects, but these studies are restricted to isotropic behaviors, where the energy and history variable depend solely on the amplitude of failure, not on its direction. For non potential-based models, the situation is even more limited: while expressions involving current variables have been proposed \cite{McGarry}, analytical frameworks for incorporating history variables into cohesive tensions remain largely unexplored.
	
	In this paper, we contribute to the analysis of cohesive-zone models in three significant ways. First, we provide a rigorous mathematical formulation of both variational and non-variational cohesive models under a unified framework. We propose an intrinsic method to construct potential energies and tension expressions that account for loading and unloading effects in a general anisotropic (mixed-mode) setting, starting from energy densities or tensions defined solely for the loading phase. These constructions yield the only admissible candidates with physically reasonable behavior, such as linear unloading responses following loading phases.
	
	Second, we compare potential-based and non potential-based models, demonstrating--through both theoretical analysis and representative examples--the limitations of the former relative to the latter. Specifically, we extend the observations of \cite{McGarry} to general loading-unloading scenarios, showing that the variational model is consistent only when the material exhibits the same fracture energy in all directions. Even in this case, the model predicts realistic behavior only under unidirectional unloading or in the restrictive case of uncoupled energy densities, where changes in one direction are independent of others. In contrast, the non-variational model produces feasible results across all loading-unloading regimes without such limitations.
	
	Finally, we investigate the consistency of these constructions in a specific model of a hybrid composite comprising two elastic laminates subjected to horizontal stretching driven by a prescribed boundary displacement. The laminates interact along their interface, where cohesive effects arise due to displacement mismatches. Unlike previous studies \cite{AleFredd2d,Riv}, which assume isotropic interfaces, we consider an anisotropic interface where cohesive effects depend on the direction of the displacement slip. While prior works focused solely on variational formulations, we establish well-posedness for the non-variational formulation as well. The model assumes slow, quasistatic evolution and small displacements, allowing for a linearized elasticity framework. This extends the scope of earlier studies on isotropic cohesive interfaces \cite{AleFredd1d,BalBabBouHenMau,BalBouMarMau, CagnToad, NegSca,NegVit} to the anisotropic case.
	
	\textbf{Plan of the paper.} In Section~\ref{sec:setting}, we provide a detailed description of the mechanical model under study: two elastic laminates with a cohesive interface. We present both the variational and the non-variational formulations of the problem. For the variational formulation, we introduce the elastic and cohesive energies of the system, while for the non-variational formulation, we derive the equilibrium equations (or inclusions) that must be satisfied. In both cases, we outline the key assumptions, distinguishing between those required for mathematical rigor and those essential from a mechanical perspective. We then define the notions of quasistatic evolution appropriate for each setting: for the potential-based model, we adopt the well-established concept of energetic solutions, while for the non potential-based model, we introduce a natural notion of equilibrium solutions.
	Section~\ref{sec:examples} focuses on the explicit construction of an anisotropic cohesive energy and a cohesive tension that incorporate both loading and unloading regimes, starting from a given energy density or tension defined solely for the loading phase. We ensure that these constructions satisfy all necessary mechanical and mathematical properties. Additionally, we highlight the limitations of the variational formulation compared to the non-variational one, emphasizing the latter's broader applicability.
	In Section~\ref{sec:numeric}, we present representative examples to validate the consistency of our theoretical constructions and to support the discussions in Section~\ref{sec:examples}. These examples illustrate the practical implications of our findings and demonstrate the effectiveness of the proposed models.
	Finally, Section~\ref{sec:existence} is dedicated to the mathematical proof of existence for both energetic solutions (in the variational setting) and equilibrium solutions (in the non-variational setting). For the former, we employ the well-known method of minimizing movements, while for the latter, we utilize Kakutani's fixed point theorem for set-valued functions. These proofs establish the well-posedness of the proposed formulations and provide a rigorous foundation for their application.

	\section*{Notation}
	
	The maximum (resp. minimum) of two extended real numbers $\alpha,\beta\in \R\cup\{\pm\infty\}$ is denoted by $\alpha\vee\beta$ (resp. $\alpha\wedge\beta$).
	
	For a positive integer $n\in \N$, the standard scalar product between vectors $v,w\in \R^n$ is denoted by $v\cdot w$ and we write $|v|_n$ for the euclidean norm, where the subscript simply stresses the space dimension. Analogously, by $0_n$ we mean the null vector in $\R^n$. For lightness of notation, in the scalar case $n=1$ we omit the subscript. We also introduce the vector $v\bm{\vee}w$ whose components are obtained by taking the maximum between the corresponding components of $v$ and $w$, i.e. $(v\bm{\vee}w)_i:=v_i\vee w_i$.
	
	We use the symbols $\R^{n\times n}$ and $\R^{n\times n}_{\rm sym}$ to denote the set of real $(n\times n)$-matrices and the subset of symmetric matrices. For any matrix $A\in \R^{n\times n}$, we write $A_{\rm sym}:=\frac12 (A+A^T)\in \R^{n\times n}_{\rm sym}$ for its symmetric part. In the case $A=\nabla u$ we adopt the usual notation $e(u)$ in place of $(\nabla u)_{\rm sym}$. The Frobenius scalar product between two matrices $A, B \in \R^{n\times n}$ is $A:B=\tr(AB^T)$, and the corresponding norm is denoted by $|A|_{n\times n}:=\sqrt{A:A}$.  The tensor product between two vectors $v,w\in \R^n$ is the matrix $v\otimes w\in \R^{n\times n}$ defined by $(v\otimes w)_{i,j}:=v_iw_j$.
	
	We adopt standard notations for Bochner spaces and for scalar- or vector-valued Lebesgue and Sobolev spaces. By $L^0(\Omega)^+$ we mean the space of nonnegative Lebesgue measurable functions on the (open) set $\Omega\subseteq \R^n$. Given $\alpha\in (0,1]$, by $C^{0,\alpha}(\overline{\Omega})$ and $C^{0,\alpha}(\overline{\Omega};\R^m)$ we mean, respectively, the space of scalar- and $\R^m$-valued functions which are $\alpha$-H\"older continuous (Lipschitz continuous if $\alpha=1$) in $\overline\Omega$, endowed with the norm $\|\cdot\|_{C^{0,\alpha}(\overline{\Omega})}:=\|\cdot\|_{C^0(\overline{\Omega})}+[\,\cdot\,]_{\alpha,\overline\Omega}$, where $[\,f\,]_{\alpha,\overline\Omega}:=\sup\limits_{\substack{{x,y\in\overline\Omega}\\x\neq y}}\frac{|f(x)-f(y)|}{|x-y|_n^{\alpha}}$ is the H\"older seminorm of $f$. In order to lighten the notation, we write the same symbol for the norms in $C^{0,\alpha}(\overline{\Omega})$ and in $C^{0,\alpha}(\overline{\Omega};\R^m)$; the meaning will be clear from the context. The same convention is used for norms in Lebesgue or Sobolev spaces. We finally denote by $C^{0,\alpha}_{\rm loc}({\Omega})$ (resp. $C^{0,\alpha}_{\rm loc}({\Omega};\R^m)$) the space of functions belonging to $C^{0,\alpha}(\overline{\Omega'})$ (resp. $C^{0,\alpha}(\overline{\Omega'};\R^m)$) for all open set $\Omega'\subset\subset \Omega$, i.e. such that the closure of $\Omega'$ is still a subset of $\Omega$.
	
	Positive and negative part of a real function $f$ are denoted by $f^+:=f\vee 0$ and $f^-:=-(f\wedge 0)$, respectively.
	
	Given a normed space $(X,\|\cdot\|_X)$, by $B([a,b];X)$ we mean the space of everywhere defined measurable functions $f\colon [a,b]\to X$ which are bounded in $X$, namely $\sup\limits_{t\in [a,b]}\|f(t)\|_X<+\infty$. 	
	
	\section{Setting}\label{sec:setting}
	
	We first describe the specific mechanical model of sliding laminates we intend to investigate in this paper, showing how  cohesive effects comes into play. We present both the variational and the non-variational formulation of the problem, and we introduce the corresponding notions of solution. We then state our main mathematical results, ensuring existence of such solutions for the two variants of the model.
	
	The reference configuration of the elastic composite is represented by an open, bounded, connected set $\Omega\subseteq\R^d$ with Lipschitz boundary. We point out that the physical dimension of the problem is $d=2$, but the mathematical arguments still remain rigorous in arbitrary dimension $d\in \N$.
	
	\subsection{Potential-based model}
	The total elastic energy of the composite is given by the functional $\mc E\colon H^1(\Omega;\R^d)^2\to [0,+\infty)$ defined as
	\begin{equation*}
		\mc E(\bm{u}):=\sum_{i=1}^{2}\frac 12 \int_{\Omega} \C_i(x) e(u_i(x)):e(u_i(x))\d x.
	\end{equation*}
	Here, the bold letter $\bm u=(u_1,u_2)$ denotes the pair of (horizontal) displacements of the two layers, which completely describes the elastic behaviour of the material since we are working in a linearized setting (small deformations). Moreover $\C_i\colon \Omega\to\R^{d\times d\times d\times d}$ is the stiffness tensor of the $i$th layer; for $i=1,2$ we assume that
	\begin{enumerate}[label=\textup{(C\arabic*)}, start=1]
		\item \label{hyp:C1} $\C_i$ is uniformly continuous with modulus of continuity $\omega_i$,
	\end{enumerate}
	together with the usual assumptions in linearized elasticity
	\begin{enumerate}[label=\textup{(C\arabic*)}, start=2]	
		\item \label{hyp:C2} $\C_i(x)A\in \R^{d\times d}_{\rm sym}$ for all $x\in\Omega$ and $A\in \R^{d\times d}$;
		\item \label{hyp:C3} $\C_i(x)A=\C_i(x)A_{\rm sym}$ for all $x\in\Omega$ and $A\in \R^{d\times d}$;
		\item \label{hyp:C4} $\C_i(x)A:B=\C_i(x)B:A$ for all $x\in\Omega$ and $A,B\in \R^{d\times d}$ (symmetry);
		\item \label{hyp:C5} $\C_i(x)A:A\ge c_i \vert A_{\rm sym}\vert^2_{d\times d}$ for some $c_i>0$ and for all $x\in\Omega$ and $A\in \R^{d\times d}$ (coercivity).
	\end{enumerate}
	We recall that the coercivity condition \ref{hyp:C5} automatically implies the so-called strict Legendre-Hadamard condition (see for instance \cite[end of Chapter 5]{Ciarlet})
	\begin{equation}\label{eq:LegendreHadamard}
		\C_i(x)(v\otimes w):(v\otimes w)\ge \frac{c_i}{2} \vert v\otimes w\vert^2_{d\times d},\qquad \text{ for all $x\in\Omega$ and $v,w\in \R^{d}$}.
	\end{equation}
	
	The (planar) interface between the two layers of material is assumed to behave in a cohesive fashion with respect to their reciprocal slip. Differently from previous contributions \cite{BonCavFredRiva, Riv}, we allow for anisotropy of such interface, possibly due to asymmetries in the microstructures of the strata. To describe such anisotropic effects we fix an integer $m\in \N$, representing the number of cohesive variables, and we consider a function $\mf g\colon \R^d\to [0,+\infty)^m$ satisfying
	\begin{enumerate}[label=\textup{($\mf g$\arabic*)}, start=1]	
		\item \label{hyp:g1} $\mf g(0_d)=0_m$;
		\item \label{hyp:g2} $\mf g$ is Lipschitz continuous in $\R^d$.
	\end{enumerate}
	
	\begin{rmk}\label{ex:1}
		The isotropic case analyzed in \cite{BonCavFredRiva, Riv} can be recovered by choosing $m=1$ and $\mf g(\delta)=|\delta|_d$, for $\delta\in \R^d$, namely the cohesive energy \eqref{eq:cohesiveenergy} below just depends on the amplitude of the slip between the two laminates, but not on its direction.
	\end{rmk}
	
	\begin{ex}\label{ex:2}
		The prototypical anisotropic (also called mixed-mode) case is given by the choice $m=d$ and $\mf g(\delta)=(|\delta_1|, |\delta_2|,\dots,|\delta_d|)$, namely the cohesive energy reacts differently with respect to slips in different directions. One may also replace the modulus $|\delta_i|$ with the asymmetric version $\mu_i^+\delta_i^++\mu_i^-\delta_i^-$, with coefficients $\mu^\pm_i\ge 0$, thus distinguishing between forward and backward slips.
	\end{ex}
	
	The cohesive energy is then given by a functional $\mc K\colon L^0(\Omega;\R^d)\times (L^0(\Omega)^+)^m\to[0,+\infty)$ defined by
	\begin{equation}\label{eq:cohesiveenergy}
		\mc K(\delta,\gamma):=\int_{\Omega}\Phi(x,\mf g(\delta(x)),\gamma(x))\d x,
	\end{equation}
	where the variable $\delta$ denotes the current slip of the two layers $u_1-u_2$, while $\gamma$, here and henceforth called history variable, represents the irreversible counterpart of the cohesive variables $\mathfrak g(\delta)$, namely each component $\gamma_l$ (at time $t$) plays the role of the maximum value reached by $\mathfrak g_l(\delta)$ during the evolution (till time $t$).
	
	The density $\Phi\colon \Omega\times [0,+\infty)^m\times [0,+\infty)^m\to [0,+\infty)$, which takes into account different loading-unloading regimes, is measurable and satisfies the following properties:
	\begin{enumerate}[label=\textup{($\Phi$\arabic*)}, start=1]	
		\item \label{hyp:phi1} $\Phi(x,0_m,0_m)=0$ for a.e. $x\in \Omega$;
		\item \label{hyp:phi2} $\Phi(x,\cdot,\cdot)$ is bounded, essentially uniformly with respect to $x\in \Omega$; also, it is continuous in the whole $[0,+\infty)^m\times [0,+\infty)^m$, for a.e. $x\in \Omega$;
		\item \label{hyp:phi3} the map $y\mapsto \Phi(x,y,z)$ is Lipschitz continuous in $[0,+\infty)^m$, essentially uniformly with respect to $x\in \Omega$ and uniformly with respect to $z\in[0,+\infty)^m$;
		\item \label{hyp:phi4} $\Phi(x,y,z)=\Phi(x,y,y\bm{\vee}z)$ for a.e. $x\in\Omega$ and for all $(y,z)\in [0,+\infty)^m\times [0,+\infty)^m$;
		\item \label{hyp:phi5} for a.e. $x\in\Omega$ and for all $y\in [0,+\infty)^m$ the map $z\mapsto\Phi(x,y,z)$ is nondecreasing with respect to each component.
	\end{enumerate}
	In Section~\ref{sec:examples} we propose an intrinsic way to explicitely construct densities $\Phi$ which behave in a proper fashion with respect to loading ($\mathfrak g_l(\delta)$ increases) and unloading ($\mathfrak g_l(\delta)$ decreases). We also present some examples, which encompass known models analyzed in literature \cite{AleFredd2d,ParkPaulino,PPRthermo}. In particular, we stress that realistic densities should fulfil the following properties (for a.e. $x\in\Omega$ and for all $z\in [0,+\infty)^2$):
	
	\begin{enumerate}[label=\textup{($\Phi$\arabic*)}, start=6]
		\item \label{hyp:phi6} for all $l=1,\dots,m$, the map $y_l\mapsto \Phi(x,y,z)$ is nondecreasing; 
		\item \label{hyp:phi7} for all $l=1,\dots,m$, the map $y_l\mapsto \Phi(x,y,z)$ is convex and quadratic in the unloading zone $y_l<z_l$, while it is concave in the loading zone $y_l\ge z_l$;
		\item \label{hyp:phi8} for all $l,j=1,\dots,m$ with $l\neq j$, the map $y_j\mapsto\partial_{y_l}\Phi(x,y,z)$ is nonincreasing;
		\item \label{hyp:phi9}$\lim\limits_{|y|_m\to \infty}|\nabla_y\Phi(x,y,z)|_m=0$.
	\end{enumerate}
	
	\begin{rmk}\label{rmk:initelast}
		Condition \ref{hyp:phi7} may be weakened requiring that the density is concave in the loading zone just if $z_l$ has reached a certain positive but small threshold $\bar{z}_l$. This describes materials possessing an initial elastic behaviour, included in the so-called intrinsic cohesive models.
	\end{rmk}	
	
	The evolution of the system is driven by a prescribed horizontal external displacement $\ell$ acting on a portion of a boundary $\partial_D\Omega\subseteq\partial\Omega$ of positive Hausdorff measure $\mc H^{d-1}(\partial_D\Omega)>0$. Given $T>0$, we require that
	\begin{equation}\label{eq:externalloading}
		\ell\in W^{1,1}(0,T;H^1(\Omega;\R^d)),
	\end{equation}
	and we postulate that it is slow with respect to internal vibrations, so that inertia can be neglected and the model can be set in a quasistatic framework.
	
	Given a function $f\colon\partial_D\Omega\to \R^d$, we also introduce the following notation:
	\begin{equation*}
		H^1_{D,f}(\Omega;\R^d):=\{v\in H^1(\Omega;\R^d):\, v=f\quad\mc H^{d-1}\text{-a.e. in }\partial_D\Omega\}.
	\end{equation*}
	The total energy of the elastic composite can thus be written through the functional $\mc F\colon [0,T]\times H^1(\Omega;\R^d)^2\times (L^0(\Omega)^+)^m\to [0,+\infty]$ defined by
	\begin{equation*}
		\mc F(t,\bm u, \gamma):=\begin{cases}
			\mc E(\bm u)+\mc K(u_1-u_2,\gamma),&\text{if }\bm u\in (H^1_{D,\ell(t)}(\Omega;\R^d))^2,\\
			+\infty,&\text{otherwise.}
		\end{cases}
	\end{equation*}
	
	At the initial time $t=0$ we prescribe the initial conditions
	\begin{subequations}\label{eq:initialdata}
		\begin{equation}\label{eq:in1}
			(\bm u^0,\gamma^0)\in (H^1_{D,\ell(0)}(\Omega;\R^d))^2\times C^{0,1}_{\rm loc}(\Omega;\R^m),
		\end{equation}
		and we assume that
		\begin{equation}\label{eq:initialstability}
			\begin{gathered}
				\gamma^0_l\ge \mf g_l(u^0_1-u_2^0),\quad\text{ for all }l=1,\dots m, \quad\text{ and }\\
				\mc F(0,\bm u^0,\gamma^0)\le \mc F(0,\bm v,\gamma^0),\quad\text{ for every }\bm v\in H^1(\Omega;\R^d)^2.
			\end{gathered}
		\end{equation}
	\end{subequations}
	
	The notion of solution we adopt in this paper for the potential-based model has a natural variational flavour, and it is well-fitted for quasistatic evolutions \cite{MielkRoubbook}. Roughly speaking, such solution minimizes at all times the total energy $\mc F(t,\cdot,\cdot)$ (see \ref{GS}) while the history variable increases, and at the same time an energy balance is preserved (see \ref{EB}).
	
	\begin{defn}\label{def:genensol}
		Given a prescribed displacement $\ell$ and initial data $(\bm u^0,\gamma^0)$ satisfying \eqref{eq:externalloading} and \eqref{eq:initialdata}, we say that a map  $[0,T]\ni t\mapsto (\bm u(t),\gamma(t))\in  H^1(\Omega;\R^d)^2\times (L^0(\Omega)^+)^m$ is a (generalized) \emph{energetic solution} to the potential-based cohesive interface model if the initial conditions $(\bm u(0),\gamma(0))=(\bm u^0,\gamma^0)$ are attained, each component of the history variable $\gamma$ is nondecreasing in time, and the following global stability condition and energy balance are satisfied for all $t\in [0,T]$:
		\begin{enumerate}[label=\textup{(GS)}]
			\item \label{GS} $\gamma_l(t)\ge \mf g_l(u_1(t)-u_2(t)),\quad\text{ for all }l=1,\dots m,\quad\text{ and }$\\$\mc F(t,\bm u(t),\gamma(t))\le \mc F(t,\bm v,\gamma(t)),\quad \text{ for every }\bm v\in H^1(\Omega;\R^d)^2;$
		\end{enumerate}
		\begin{enumerate}[label=\textup{(EB)}]
			\item \label{EB} $\displaystyle \mc F(t,\bm u(t),\gamma(t))=\mc F(0,\bm u^0,\gamma^0)+\mc W(t);$
		\end{enumerate}
		where the quantity $\mc W(t)$ represents the  amount of work computed by the prescribed displacement until the time $t$, which is defined as
		\begin{equation}\label{eq:work}
			\mc W(t):=\int_{0}^{t}\sum_{i=1}^2\int_{\Omega}\C_i(x) e(u_i(s,x)):e(\dot{\ell}(s,x))\d x\d s.
		\end{equation}
	\end{defn}
	
	Observe that the minimality requirement in \ref{GS} implies that an energetic solution formally solves the following system of partial differential inclusions at all times $t\in [0,T]$:
	\begin{equation}\label{eq:EL}
		\begin{cases}
			-\operatorname{div}\mathbb C_1 e(u_1(t))\in-\nabla_y \Phi(\cdot, \mathfrak g(u_1(t)-u_2(t)),\gamma(t))D\mathfrak g(u_1(t)-u_2(t)), &\text{in }\Omega,\\
			-\operatorname{div}\mathbb C_2 e(u_2(t))\in\nabla_y\Phi(\cdot, \mathfrak g(u_1(t)-u_2(t)),\gamma(t))D\mathfrak g(u_1(t)-u_2(t)), &\text{in }\Omega,\\
			u_1(t)=u_2(t)=\ell(t), &\text{in }\partial_D\Omega,\\
			\partial_n u_1(t)=\partial_n u_2(t)=0_d, &\text{in }\partial\Omega\setminus\partial_D\Omega,
		\end{cases}	
	\end{equation}
	where $D\mathfrak g(\delta)$ denotes the set of matrices $m\times d$ whose $l$-th row belongs to the (convex) subdifferential of $\mathfrak g_l$ at $\delta$.
	
	We now state our first mathematical result, which provides existence of energetic solutions. Its proof can be found in Section~\ref{sec:existence}.
	
	\begin{thm}\label{thm:existence}
		Let the stiffness tensors $\C_i$ satisfy \ref{hyp:C1}-\ref{hyp:C5} and let the cohesive energy density $\Phi$ and the cohesive variables $\mf g$ satisfy \ref{hyp:phi1}-\ref{hyp:phi5} and \ref{hyp:g1}-\ref{hyp:g2}. Then, given a prescribed displacement $\ell$ and initial data $(\bm u^0,\gamma^0)$ fulfilling \eqref{eq:externalloading} and \eqref{eq:initialdata}, there exists an energetic solution $(\bm u,\gamma)$ to the potential-based cohesive interface model in the sense of Definition~\ref{def:genensol}.
		
		Furthermore, such pair of displacements $\bm u$ actually belongs to $B([0,T]; H^1(\Omega;\R^d)^2)$ and to $B([0,T];C^{0,\alpha}(\overline{\Omega'};\R^d)^2)$ for all $\Omega'\subset\subset \Omega$ and $\alpha\in (0,1)$, while the history variable $\gamma$ actually is in $B([0,T];C^{0,\alpha}(\overline{\Omega'};\R^m))$ for all $\Omega'\subset\subset \Omega$ and $\alpha\in (0,1)$. 
	\end{thm}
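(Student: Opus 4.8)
The plan is to build an energetic solution by time-discretization (minimizing movements), as announced in the Introduction. Fix $n\in\N$, set $\tau_n:=T/n$, $t^n_k:=k\tau_n$, and $(\bm u^n_0,\gamma^n_0):=(\bm u^0,\gamma^0)$; recursively, given $(\bm u^n_{k-1},\gamma^n_{k-1})$, let $\bm u^n_k$ minimize $\bm v\mapsto\mc E(\bm v)+\mc K(v_1-v_2,\gamma^n_{k-1})$ over $(H^1_{D,\ell(t^n_k)}(\Omega;\R^d))^2$, and set $\gamma^n_k:=\gamma^n_{k-1}\bm{\vee}\mf g(u^n_{1,k}-u^n_{2,k})$. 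Each incremental minimization is solvable by the direct method: coercivity in $H^1$ comes from \ref{hyp:C5}, Korn's inequality and the Dirichlet condition on $\partial_D\Omega$ (recall $\mc H^{d-1}(\partial_D\Omega)>0$, $\ell\in W^{1,1}(0,T;H^1)$) together with $\mc K\ge0$; as for lower semicontinuity, $\mc E$ is weakly lower semicontinuous by convexity, while along a weakly $H^1$-convergent sequence $\bm v\mapsto\mc K(v_1-v_2,\gamma^n_{k-1})$ is actually \emph{continuous}---by Rellich $v_1-v_2$ converges strongly in $L^2$, hence by \ref{hyp:g2} so does $\mf g(v_1-v_2)$, and one concludes using \ref{hyp:phi2} (continuity, boundedness) and dominated convergence. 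Finally, \ref{hyp:phi4} (which forces $\Phi(x,y,\cdot)$ to be constant wherever $z_l\le y_l$) and \ref{hyp:phi5} give $\mc K(u^n_{1,k}-u^n_{2,k},\gamma^n_{k-1})=\mc K(u^n_{1,k}-u^n_{2,k},\gamma^n_k)$ and, as a consequence, the discrete global stability $\mc F(t^n_k,\bm u^n_k,\gamma^n_k)\le\mc F(t^n_k,\bm v,\gamma^n_k)$ for all $\bm v\in H^1(\Omega;\R^d)^2$.

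Next, the a priori estimates. Comparing minimality at step $k$ with the competitor $\bm u^n_{k-1}+\ell(t^n_k)-\ell(t^n_{k-1})$ (correct boundary datum, same slip as $\bm u^n_{k-1}$) and expanding the quadratic form $\mc E$ gives a discrete \emph{upper} energy estimate; using minimality at step $k-1$ against $\bm u^n_k-\ell(t^n_k)+\ell(t^n_{k-1})$ gives, symmetrically, a discrete \emph{lower} one. Summing over $k$, both compare $\mc F(t^n_j,\bm u^n_j,\gamma^n_j)$ with $\mc F(0,\bm u^0,\gamma^0)$ plus a Riemann sum approximating the work \eqref{eq:work} of the interpolated displacements tested against $\dot\ell$, up to a remainder bounded by $C\sum_k\|\ell(t^n_k)-\ell(t^n_{k-1})\|^2_{H^1}$, which vanishes as $n\to\infty$ since $\ell$ is absolutely continuous in time. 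A discrete Gr\"onwall argument, using $\ell\in W^{1,1}(0,T;H^1)$, then bounds $\mc E(\bm u^n_k)$, hence $\|\bm u^n_k\|_{H^1}$, uniformly in $n$ and $k$, and the cohesive energies are uniformly bounded by \ref{hyp:phi1}--\ref{hyp:phi2}. For the regularity claim, observe that $\bm u^n_k$ solves \eqref{eq:EL} (at time $t^n_k$, with $\gamma=\gamma^n_k$), whose right-hand side is uniformly bounded in $L^\infty(\Omega;\R^d)$, since $|\nabla_y\Phi|_m$ is controlled by the Lipschitz bound \ref{hyp:phi3} and $|D\mf g|$ by \ref{hyp:g2}. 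Because the $\C_i$ are uniformly continuous \ref{hyp:C1} and satisfy the Legendre--Hadamard condition \eqref{eq:LegendreHadamard}, interior regularity for linear elliptic systems bounds $\bm u^n_k$ in $C^{0,\alpha}(\overline{\Omega'};\R^d)^2$, uniformly in $n,k$, for every $\Omega'\subset\subset\Omega$ and $\alpha\in(0,1)$; hence $\mf g(u^n_{1,k}-u^n_{2,k})$, and therefore $\gamma^n_k=\gamma^0\bm{\vee}\bigvee_{j\le k}\mf g(u^n_{1,j}-u^n_{2,j})$ (a finite maximum of functions with a common H\"older seminorm, together with $\gamma^0\in C^{0,1}_{\rm loc}$), inherits the same uniform $C^{0,\alpha}_{\rm loc}$ bound.

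To pass to the limit, introduce the piecewise-constant interpolants $\bm u^n(\cdot),\gamma^n(\cdot)$ and the nodal interpolation $t^n(\cdot)$. Each component of $\gamma^n(\cdot)$ is nondecreasing in time and the $\gamma^n(t)$ are equibounded in $C^{0,\alpha}(\overline{\Omega'})$, so a generalized Helly selection principle, combined with the Arzel\`a--Ascoli theorem through a diagonal argument over an exhaustion $\Omega'_p\uparrow\Omega$, yields a subsequence along which $\gamma^n(t)\to\gamma(t)$ in $C^0(\overline{\Omega'})$ for every $t\in[0,T]$ and every $\Omega'\subset\subset\Omega$, with $\gamma$ nondecreasing, $\gamma(0)=\gamma^0$ and $\gamma\in B([0,T];C^{0,\alpha}_{\rm loc}(\Omega;\R^m))$. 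A further diagonal extraction over a countable dense subset of $[0,T]$ containing $0$ makes $\bm u^n(t)$ converge weakly in $H^1$ there; extending $(\bm u,\gamma)$ to the remaining times---using the closedness of the stable sets, the continuity of $\ell$ in time, and the monotonicity of $\gamma$---and exploiting lower semicontinuity of the $H^1$-norm together with the uniform interior bounds, one gets $\bm u\in B([0,T];H^1(\Omega;\R^d)^2)\cap B([0,T];C^{0,\alpha}_{\rm loc}(\Omega;\R^d)^2)$. The global stability \ref{GS} is verified at each $t$ by testing the discrete stability against the recovery sequence $\bm v+\ell(t^n(t))-\ell(t)$ and letting $n\to\infty$, using continuity of $\ell$ in time, the continuity of $\mc K$ under the above convergences, and lower semicontinuity of $\mc E$; the constraint $\gamma_l(t)\ge\mf g_l(u_1(t)-u_2(t))$ and the initial conditions pass to the limit directly. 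The energy balance \ref{EB} follows by letting $n\to\infty$ in the upper discrete estimate (which yields $\mc F(t,\bm u(t),\gamma(t))\le\mc F(0,\bm u^0,\gamma^0)+\mc W(t)$: the Riemann sums converge to $\mc W(t)$ because the interpolated strains converge weakly against $e(\dot\ell)\in L^1(0,T;L^2)$ while the remainders vanish) and in the lower discrete estimate (which yields the reverse inequality), in both cases via the semicontinuity and continuity properties of $\mc F$.

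I expect the passage to the limit in time to be the main obstacle. Since $\Phi$ is only asked to satisfy \ref{hyp:phi1}--\ref{hyp:phi5}---so $\mc K$ need not be convex and the incremental minimizers need not be unique---there is no a priori time-continuity of $\bm u^n$, and one must carefully coordinate the Helly selection for the monotone history variable $\gamma^n$ with a single subsequence along which the displacements converge at \emph{every} time, then extend $(\bm u,\gamma)$ to all times preserving \ref{GS}, and finally close the two one-sided discrete energy estimates into the equality \ref{EB} (in particular checking that the Riemann sums for the work converge to $\mc W(t)$). A secondary delicate point is establishing the uniform interior H\"older bounds needed to run Helly in a function space rather than pointwise in $x$, as these rely on regularity theory for elliptic \emph{systems} (not merely scalar equations) with continuous coefficients and bounded data.
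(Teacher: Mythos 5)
Your overall strategy coincides with the paper's: time-discretization by recursive minimization, elliptic interior regularity to obtain uniform local H\"older bounds (which feed the Helly-type selection for the monotone history variable), and passage to the limit in stability and energy balance. Most technical steps match, and your discrete lower estimate obtained by testing minimality at step $k-1$ against $\bm u^k-\bm\ell(t^k)+\bm\ell(t^{k-1})$ is itself valid (after invoking \ref{hyp:phi4} and \ref{hyp:phi5} to shift the history variable). Two precision issues, however, deserve attention.

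First, the claim that the right-hand side of \eqref{eq:EL} is ``uniformly bounded in $L^\infty$ since $|\nabla_y\Phi|_m$ is controlled by the Lipschitz bound'' is formal: $\Phi$ is only assumed Lipschitz in $y$ (\ref{hyp:phi3}), so $\nabla_y\Phi$ is defined only a.e.\ and the inclusion \eqref{eq:EL} is not a pointwise equality. What one actually proves, as the paper does, is the distributional estimate $|\langle\div\C_ie(u^k_i),\varphi\rangle_{H^1_0(\Omega;\R^d)}|\le C\|\varphi\|_{L^1(\Omega)}$, obtained by testing minimality with $\bm u^k+h\bm\varphi$ and using the Lipschitz moduli in \ref{hyp:phi3} and \ref{hyp:g2}; this identifies $\div\C_ie(u^k_i)$ as an $L^\infty$ element by duality, which is what the regularity theorem needs.

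Second, deducing the lower inequality in \ref{EB} by ``letting $n\to\infty$ in the lower discrete estimate via semicontinuity of $\mc F$'' is the wrong direction: lower semicontinuity yields $\mc F(t,\bm u(t),\gamma(t))\le\liminf_\tau\mc F(t^\tau,\bm u^\tau(t),\gamma^\tau(t))$, which cannot by itself transfer a lower bound on the discrete energies to $\mc F(t,\bm u(t),\gamma(t))$. To close the argument you must additionally establish a Mosco-type convergence of energies, namely $\limsup_\tau\mc F(t^\tau,\bm u^\tau(t),\gamma^\tau(t))\le\mc F(t,\bm u(t),\gamma(t))$, by testing the discrete minimality against the recovery competitor $\bm u(t)+\bm\ell(t^\tau)-\bm\ell(t)$ and invoking \ref{hyp:phi4}, \ref{hyp:phi5} together with the boundedness and continuity of $\Phi$ and dominated convergence. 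Alternatively—and this is the route the paper takes, following \cite{BonCavFredRiva}—one bypasses the discrete lower estimate entirely and proves the continuous lower energy inequality directly from the limit global stability \ref{GS} via a Riemann-sum/partition argument on $[0,t]$, exploiting \ref{hyp:phi5} together with the constraint $\gamma_l(t)\ge\mf g_l(u_1(t)-u_2(t))$. Either repair is standard, but as written your outline skips this step.
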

	
	\subsection{Non potential-based model}
	As explained in the Introduction, non-variational models are described through equilibrium equations rather than energies. In view of the Euler-Lagrange equations \eqref{eq:EL}, the non potential version of the model under consideration is thus characterized by the following system:
	\begin{equation}\label{eq:system}
		\begin{cases}
			-\operatorname{div}\mathbb C_1 e(u_1(t))\in-\mathcal T(\cdot, \mathfrak g(u_1(t)-u_2(t)),\gamma(t))D\mathfrak g(u_1(t)-u_2(t)), &\text{in }\Omega,\\
			-\operatorname{div}\mathbb C_2 e(u_2(t))\in\mathcal T(\cdot, \mathfrak g(u_1(t)-u_2(t)),\gamma(t))D\mathfrak g(u_1(t)-u_2(t)), &\text{in }\Omega,\\
			u_1(t)=u_2(t)=\ell(t), &\text{in }\partial_D\Omega,\\
			\partial_n u_1(t)=\partial_n u_2(t)=0_d, &\text{in }\partial\Omega\setminus\partial_D\Omega,
		\end{cases}	
	\end{equation}
	where the right-hand side $\mp\mathcal T(\cdot, \mathfrak g(u_1(t)-u_2(t)),\gamma(t))D\mathfrak g(u_1(t)-u_2(t))\in \R^d$ represents the tension acting on the two layers. With a slight abuse of terminology, we still refer to the vector field  $\mathcal T\colon \Omega\times [0,+\infty)^m\times[0,+\infty)^m\to \R^m$ with the name cohesive tension. We also stress that it is not necessarily a gradient, as in the potential-based case. We require that $\mc T$ satisfies the following assumptions:
	\begin{enumerate}[label=\textup{($\mathcal T$\arabic*)}, start=1]	
		\item \label{hyp:T1} $\mathcal T(x,\cdot,\cdot)$ is bounded, essentially uniformly with respect to $x\in \Omega$; also, it is continuous in the whole $[0,+\infty)^m\times [0,+\infty)^m$, for a.e. $x\in \Omega$;
		\item \label{hyp:T2} $\mathcal T(x,y,z)=\mathcal T(x,y,y\bm{\vee}z)$ for a.e. $x\in\Omega$ and for all $(y,z)\in [0,+\infty)^m\times [0,+\infty)^m$.
	\end{enumerate}
	Analogously to the previous formulation, Section~\ref{sec:examples} also contains an intrinsic construction of tensions $\mathcal T$ possessing a realistic behaviour in both loading and unloading regimes. In particular, they also fulfil (for a.e. $x\in\Omega$ and for all $z\in [0,+\infty)^2$):
	\begin{enumerate}[label=\textup{($\mathcal T$\arabic*)}, start=3]
		\item \label{hyp:T3} for all $l=1,\dots,m$, the component $\mathcal T_l(x,y,z)$ is nonnegative;
		\item \label{hyp:T4} for all $l=1,\dots,m$, the map $y_l\mapsto\mathcal T_l(x,y,z)$ is linear and nondecreasing in the unloading zone $y_l<z_l$, while it is nonincreasing in the loading zone $y_l\ge z_l$ (possibly if $z_l\ge \bar z_l>0$, see Remark~\ref{rmk:initelast});
		\item \label{hyp:T5} for all $l,j=1,\dots,m$ with $l\neq j$, the map $y_j\mapsto\mathcal T_l(x,y,z)$ is nonincreasing;
		\item \label{hyp:T6}$\lim\limits_{|y|_m\to \infty}|\mc T(x,y,z)|_m=0$.
	\end{enumerate}	
	
	Here, in addition to \ref{hyp:g1} and \ref{hyp:g2}, we also need to require that the cohesive function $\mathfrak g$ fulfils: 
	\begin{enumerate}[label=\textup{($\mf g$\arabic*)}, start=3]	
		\item \label{hyp:g3} each component of $\mathfrak g$ is convex;
		\item \label{hyp:g4} the following closure property holds: if $\delta_n\to\delta$ strongly in $L^2(\Omega;\R^d)$, $\eta_n\rightharpoonup \eta$ weakly in $L^2(\Omega;\R^{m\times d})$, and $\eta_n(x)\in D\mathfrak g(\delta_n(x))$ for a.e. $x\in\Omega$, then $\eta(x)\in D\mathfrak g(\delta(x))$ for a.e. $x\in\Omega$ as well.
	\end{enumerate}
	
	\begin{rmk}
		It is standard to check that the examples introduced in Remarks~\ref{ex:1} and \ref{ex:2} satisfy the above assumptions \ref{hyp:g3} and \ref{hyp:g4}.
	\end{rmk}
	
	For the non potential-based model, we consider the following notion of quasistatic solution, which somehow replaces the stronger global stability condition \ref{GS} with an equilibrium condition.
	
	\begin{defn}\label{def:geneqsol}
		Given an external displacement $\ell$ and initial data $(\bm u^0,\gamma^0)$ satisfying \eqref{eq:externalloading}, \eqref{eq:in1}, the first condition in \eqref{eq:initialstability} and solving \eqref{eq:system} for $t=0$ (see \eqref{eq:weaksol} below), we say that a map  $[0,T]\ni t\mapsto (\bm u(t),\gamma(t))\in  H^1(\Omega;\R^d)^2\times (L^0(\Omega)^+)^m$ is a (generalized) \emph{equilibrium solution} to the non potential-based cohesive interface model if the initial conditions $(\bm u(0),\gamma(0))=(\bm u^0,\gamma^0)$ are attained, each component of the history variable $\gamma$ is nondecreasing in time, and the following equilibrium condition is satisfied for all $t\in [0,T]$:
		\begin{enumerate}[label=\textup{(EQ)}]
			\item \label{EQ} $\gamma_l(t)\ge \mf g_l(u_1(t)-u_2(t)),\quad\text{ for all }l=1,\dots m,\quad\text{ and }$\\ $\bm u(t)\in (H^1_{D,\ell(t)}(\Omega;\R^d))^2$ is a weak solution to \eqref{eq:system}, namely there exists a function $\bm \eta(t)\in(L^\infty(\Omega;\R^{m\times d}))^2$ such that 
			\begin{equation*}
				\eta_i(t,x)\in D\mathfrak g(u_1(t,x)-u_2(t,x)), \qquad\text{ for a.e. }x\in\Omega,\text{ and for }i=1,2,
			\end{equation*}
			and for all $\bm\varphi \in (H^1_{D,0_d}(\Omega;\R^d))^2$ there holds
			\begin{equation}\label{eq:weaksol}
				\sum_{i=1}^2\int_\Omega \mathbb C_ie(u_i(t)):e(\varphi_i) \d x=-\int_\Omega\mathcal T(x,\mathfrak g(u_1(t)-u_2(t)),\gamma(t))\cdot \Big(\eta_1(t)\varphi_1-\eta_2(t)\varphi_2\Big) \d x.
			\end{equation}
		\end{enumerate}
	\end{defn}
	
	\begin{rmk}
		If the displacement pair $\bm u$ of an equilibrium solution is more regular in time, say $\bm u\in AC([0,T];H^1(\Omega;\R^d)^2)$, then the following energy balance is also satisfied for all $t\in [0,T]$:
		\begin{align*}
			&\mc E(\bm u(t))+\int_0^t\int_\Omega \mc T(x,\mathfrak g(u_1(\tau)-u_2(\tau)),\gamma(\tau))\cdot \Big(\eta_1(\tau)(\dot u_1(\tau){-}\dot\ell(\tau)){-}\eta_2(\tau)(\dot u_2(\tau){-}\dot\ell(\tau))\Big)\d x\d \tau\\
			=&	\mc E(\bm u^0)+\mc W(t),
		\end{align*}
		where the work of the external displacement $\mc W$ has been introduced in \eqref{eq:work}. In the particular case $\mathcal T=\nabla_y\Phi$, namely if the model admits a potential, then one formally recovers \ref{EB}.
		
		A possible approach leading to the validity of a suitable energy balance without additional time-regularity (which is not expected in general) may be the vanishing viscosity argument. However, this goes beyond the scopes of the present paper.
	\end{rmk}
	
	In this setting, we are able to show existence of equilibrium solutions. The proof of the following result is developed in Section~\ref{sec:existence}.
	
	\begin{thm}\label{thm:existencenonpot}
		Let the stiffness tensors $\C_i$ satisfy \ref{hyp:C1}-\ref{hyp:C5} and let the cohesive tension $\mc T$ and the cohesive variables $\mf g$ satisfy \ref{hyp:T1}-\ref{hyp:T2} and \ref{hyp:g1}-\ref{hyp:g4}. Then, given a prescribed displacement $\ell$ and initial data $(\bm u^0,\gamma^0)$ fulfilling \eqref{eq:externalloading}, \eqref{eq:in1}, the first condition in \eqref{eq:initialstability} and \eqref{eq:weaksol} for $t=0$, there exists an equilibrium solution $(\bm u,\gamma)$ to the non potential-based cohesive interface model in the sense of Definition~\ref{def:geneqsol}.
		
		Furthermore, the same regularity properties stated in Theorem~\ref{thm:existence} hold. 
	\end{thm}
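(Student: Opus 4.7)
The proof follows the same discretize-then-pass-to-the-limit scheme as Theorem~\ref{thm:existence}, but, since no energy is available to minimize, I replace the minimizing-movement step with a set-valued fixed point argument of Kakutani type, as announced in the Introduction.

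Fix $n\in\N$, set $\tau_n=T/n$, $t_n^k=k\tau_n$, and construct inductively $(\bm u_n^k,\bm\eta_n^k)$ with history update $\gamma_n^k:=\gamma_n^{k-1}\bm\vee\mf g(u_{n,1}^k-u_{n,2}^k)$ (which, by \ref{hyp:T2}, is interchangeable with $\gamma_n^{k-1}$ inside $\mathcal T$). I recast the incremental problem as a fixed point on the convex, weakly compact subset
\[
K=\big\{(\bm v,\bar\eta)\in(H^1_{D,\ell(t_n^k)}(\Omega;\R^d))^2\times (L^\infty(\Omega;\R^{m\times d}))^2:\|\bm v\|_{H^1}\le R,\ \|\bar\eta\|_\infty\le\mathrm{Lip}(\mf g)\big\},
\]
defining $\Lambda(\bm v,\bar\eta)$ as the family of pairs $(\bm u,\eta)$ where $\bm u$ is the unique Lax--Milgram solution of the linear elliptic system with frozen right-hand side $-\mathcal T(\cdot,\mf g(v_1-v_2),\gamma_n^{k-1})\cdot(\bar\eta_1\varphi_1-\bar\eta_2\varphi_2)$, and $\eta(x)\in D\mf g(u_1(x)-u_2(x))$ a.e. The radius $R$ is selected once and for all using coercivity \ref{hyp:C5}, Korn's inequality, and the uniform $L^\infty$ bound from \ref{hyp:T1}, so that $\Lambda(K)\subseteq K$. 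The values of $\Lambda$ are nonempty (subdifferentials of convex Lipschitz functions admit measurable selections) and convex; upper semicontinuity in the weak $H^1\times L^2$ topology follows from Rellich's compact embedding $H^1\hookrightarrow L^2$ (giving strong $L^2$ convergence of the slips), continuity of $\mathcal T$ from \ref{hyp:T1}, and the closure property \ref{hyp:g4}. Kakutani's theorem then yields a fixed point $(\bm u_n^k,\bm\eta_n^k)$.

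I then introduce piecewise constant time-interpolants $\bar{\bm u}_n,\bar\gamma_n,\bar{\bm\eta}_n$. Componentwise monotonicity of $\bar\gamma_n$ combined with Helly's selection theorem gives pointwise convergence $\bar\gamma_n(t)\to\gamma(t)$ for every $t$, and a diagonal extraction (afforded by the uniform $H^1$ bound on $\bar{\bm u}_n$) produces, for each fixed $t$, a subsequence along which $\bar{\bm u}_n(t)\rightharpoonup\bm u(t)$ in $H^1$ (strongly in $L^2$ by Rellich) and $\bar{\bm\eta}_n(t)\rightharpoonup\bm\eta(t)$ weakly in $L^2$. Passing to the limit in \eqref{eq:weaksol} is then standard: the tension converges strongly in $L^p$ by continuity \ref{hyp:T1} and Lipschitz continuity of $\mf g$, while \ref{hyp:g4} transfers the inclusion $\bar{\bm\eta}_n(t)\in D\mf g(\bar u_{n,1}(t)-\bar u_{n,2}(t))$ to the limit. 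The inequality $\gamma_l(t)\ge\mf g_l(u_1(t)-u_2(t))$ survives the limit by monotonicity of $\bm\vee$ and continuity of $\mf g$. The regularity claims follow from interior estimates for the linear elliptic system with $L^\infty$ right-hand side, exactly as in Theorem~\ref{thm:existence}.

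The principal obstacle lies in the two limit passages for the set-valued inclusion $\bm\eta\in D\mf g(u_1-u_2)$: first inside Kakutani's theorem, to verify upper semicontinuity of $\Lambda$, and then in the diagonal extraction as $n\to\infty$. Both rely crucially on the closure property \ref{hyp:g4}, whose inclusion among the hypotheses on $\mf g$ is motivated precisely by this role.
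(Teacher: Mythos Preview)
Your proof is correct and follows essentially the same scheme as the paper's: time discretization, a Kakutani-type fixed point for the incremental problem, uniform $H^1$ and local H\"older bounds via interior elliptic regularity, Helly-type compactness for $\gamma$ with a $t$-dependent subsequence for $\bm u$, and passage to the limit in the weak formulation using \ref{hyp:T1}, \ref{hyp:T2}, \ref{hyp:g2} and the closure property \ref{hyp:g4}. The only (harmless) variation is in the Kakutani setup: you take the fixed-point variable to be the pair $(\bm v,\bar\eta)$ with $\eta$ in the subdifferential at the \emph{output} $\bm u$, while the paper works with the displacement $\bm s$ alone and lets $\eta$ range over $D\mf g(s_1-s_2)$ at the \emph{input}; both choices yield the same fixed points and the graph-closure verification is essentially identical.
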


	\section{Construction of cohesive energy densities and tensions}\label{sec:examples}
	In this section we present an intrinsic way of building a feasible loading-unloading energy density $\Phi$ and tension $\mc T$, starting from a ``purely loading'' density $\Psi$ or tension $\mc S$, respectively. Actually, as we will see, a good candidate for $\mc S$ is given by $\nabla\Psi\bm\vee 0_d$, indeed the issues observed in \cite{McGarry,PPR} for potential-based models occur when the partial derivatives $\partial_i\Psi$ become negative. We then provide several examples of densities $\Psi$, including many expressions studied in \cite{McGarry,ParkPaulino,PPR,PPRthermo}. 
	
	Although the model depicted in the previous section deals with cohesive delamination, we point out that the arguments of the current section are valid for general cohesive models, so for instance they may be applied to fracture mechanics.
	
	Also supported by the numerical examples performed in Section \ref{sec:numeric}, our analysis essentially proves that variational mixed-mode models are consistent just if the complete delamination energies in the different directions are equal (see \eqref{eq:phi12}). Moreover, even in this case, they return physically realistic responses only in the restrictive (and quite unnatural) situation of unloading happening in one direction, while the slips in the other ones are still. On the contrary, our construction of non-variational mixed-mode models provides a satisfactory description of real instances without any limitations and in any loading-unloading regime.
	
	For applicative reasons, we focus on the physical dimension $d=2$; moreover, for the sake of clarity we restrict our attention to homogeneous interfaces, i.e. the densities do not depend on $x\in\Omega$, but we stress that inhomogeneous ones can be treated in an analogous way.
	
	\subsection{Construction of the potential $\Phi$} We consider a loading function $\Psi:[0,+\infty)^2\to[0,+\infty)$ (we write $\Psi=\Psi(y_1,y_2)$) satisfying 
	\begin{enumerate} [label=\textup{($\Psi$\arabic*)}, start=1]	
		\item \label{Psi1}$\Psi(0,0)=0$;
		\item  \label{Psi2}$\Psi$ is bounded, Lipschitz, with $\nabla \Psi$ and  $\partial _{12}\Psi$ locally Lipschitz in $(0,+\infty)^2$;
		\item   \label{Psi3} for $i,j=1,2$ with $j\neq i$ there hold $\partial_i\Psi\geq y_i\partial_{ii}\Psi\vee 0$ and $\partial_{12}\Psi\leq y_i\partial_{iij}\Psi\wedge 0$ a.e. in $[0,+\infty)^2$;
		\item   \label{Psi4}$\displaystyle\sup_{y_1,y_2>0}(y_1+y_2)|\partial_{12}\Psi(y_1,y_2)|<+\infty$.
	\end{enumerate}
	These requirements are needed for the validity of the mathematical assumptions \ref{hyp:phi1}-\ref{hyp:phi5}; in order to have also the physical properties \ref{hyp:phi6}-\ref{hyp:phi9} we need to add:
	\begin{enumerate} [label=\textup{($\Psi$\arabic*)}, start=5]	
		\item \label{Psi5} $\nabla \Psi$ and $\partial_{12}\Psi$ vanish at infinity;
		\item  \label{Psi6} for $i,j=1,2$ with $j\neq i$ there holds $2\partial_{ii}\Psi\le y_j\partial_{iij}\Psi\wedge 0$ a.e. in $[0,+\infty)^2$ (or if $y_i\ge \bar z_i$, $y_j\ge 0$ in case of Remark~\ref{rmk:initelast}).
	\end{enumerate}
	
	Before writing the definition of $\Phi$, we introduce the following notation. Given $z_1,z_2\geq 0$, playing the role of history variables, we divide the space of possible openings into four regions:
	\begin{eqnarray*}
		&&R_1(z_1,z_2):=\{y_1\geq z_1,\ y_2\geq z_2\},\\
		&&R_2(z_1,z_2):=\{0\leq y_1< z_1,\ y_2\geq z_2\},\\
		&&R_3(z_1,z_2):=\{y_1\geq z_1,\ 0\leq y_2<z_2\},\\
		&&R_4(z_1,z_2):=\{0\leq y_1<z_1,\ 0\leq y_2<z_2\}.
	\end{eqnarray*}
	Observe that $R_1$ and $R_4$ represent the pure loading and the pure unloading phase, respectively; while $R_2$ and $R_3$ describe the mixed phases, in which one direction experiences loading while the other one unloading. 
	
	We then set $\Phi:[0,+\infty)^2{\times}[0,+\infty)^2\to \R$ as
	\begin{eqnarray*}
		\Phi(y_1,y_2,z_1,z_2):=
		\begin{cases}
			\Psi(y_1,y_2), & \text{if } (y_1,y_2)\in R_1(z_1,z_2),\\
			\displaystyle\Psi(z_1,y_2)-\frac{z_1}{2}\partial_1\Psi(z_1,y_2)\left(1-\left(\frac{y_1}{z_1}\right)^2\right), & \text{if } (y_1,y_2)\in R_2(z_1,z_2),\\
			\displaystyle\Psi(y_1,z_2)-\frac{z_2}{2}\partial_2\Psi(y_1,z_2)\left(1-\left(\frac{y_2}{z_2}\right)^2\right),  & \text{if } (y_1,y_2)\in R_3(z_1,z_2),\\
			\displaystyle\Psi(z_1,z_2)-\frac{z_1}{2}\partial_1\Psi(z_1,z_2)\left(1-\left(\frac{y_1}{z_1}\right)^2\right) & \text{}\\
			\displaystyle -\frac{z_2}{2}\partial_2\Psi(z_1,z_2)\left(1-\left(\frac{y_2}{z_2}\right)^2\right) & \text{}\\
			\displaystyle+\frac{z_1z_2}{4}\partial_{12}\Psi(z_1,z_2)\left(1-\left(\frac{y_1}{z_1}\right)^2\right)\left(1-\left(\frac{y_2}{z_2}\right)^2\right),
			& \text{if } (y_1,y_2)\in R_4(z_1,z_2).
		\end{cases}
	\end{eqnarray*}
	Let us briefly describe the structure of $\Phi$. In the pure loading zone $R_1$ it coincides with the loading density $\Psi$; in $R_2$ (and similarly in $R_3$), namely in the unloading phase for $y_1$, it behaves quadratically with respect to $y_1$, and the coefficients are chosen in order to have a smooth junction; finally, in the pure unloading zone $R_4$ the function $\Phi$ is quadratic in both directions $y_1$ and $y_2$.
	
	We actually observe that this expression provides the only candidate of potential energy whose partial derivatives (tensions) experience linear decay in the unloading phase with respect to the corrisponding variable. 
	
	We now check the validity of \ref{hyp:phi1} -- \ref{hyp:phi5}. Continuity of $\Phi$ in $[0,+\infty)^2\times[0,+\infty)^2$ and validity of hypotheses \ref{hyp:phi1} and \ref{hyp:phi4} are immediate. In order the check the remaining assumptions, it is useful to explicitly compute the partial derivatives of $\Phi$:
	\begin{subequations}\label{eq:gradient}
		\begin{eqnarray}
			\partial_{y_1}\Phi(y_1,y_2,z_1,z_2)=
			\begin{cases}
				\partial_{1}\Psi(y_1,y_2), & \text{if } (y_1,y_2)\in R_1(z_1,z_2),\\
				\displaystyle \partial_1\Psi(z_1,y_2)\frac{y_1}{z_1},& \text{if } (y_1,y_2)\in R_2(z_1,z_2),\\
				\displaystyle\partial_{1}\Psi(y_1,z_2)-\frac{z_2}{2}\partial_{12}\Psi(y_1,z_2)\left(1-\left(\frac{y_2}{z_2}\right)^2\right),  & \text{if } (y_1,y_2)\in R_3(z_1,z_2),\\
				\displaystyle \left(\partial_1\Psi(z_1,z_2)-\frac{z_2}{2}\partial_{12}\Psi(z_1,z_2)\left(1-\left(\frac{y_2}{z_2}\right)^2\right)\right)\frac{y_1}{z_1},
				& \text{if } (y_1,y_2)\in R_4(z_1,z_2),
			\end{cases}
		\end{eqnarray}
		\begin{eqnarray}
			\partial_{y_2}\Phi(y_1,y_2,z_1,z_2)=
			\begin{cases}
				\partial_{2}\Psi(y_1,y_2), & \text{if } (y_1,y_2)\in R_1(z_1,z_2),\\
				\displaystyle \partial_{2}\Psi(z_1,y_2)-\frac{z_1}{2}\partial_{12}\Psi(z_1,y_2)\left(1-\left(\frac{y_1}{z_1}\right)^2\right),  & \text{if } (y_1,y_2) \in R_2(z_1,z_2),\\
				\displaystyle \partial_2\Psi(y_1,z_2)\frac{y_2}{z_2},& \text{if } (y_1,y_2)\in R_3(z_1,z_2),\\
				\displaystyle \left(\partial_2\Psi(z_1,z_2)-\frac{z_1}{2}\partial_{12}\Psi(z_1,z_2)\left(1-\left(\frac{y_1}{z_1}\right)^2\right)\right)\frac{y_2}{z_2},
				& \text{if } (y_1,y_2)\in R_4(z_1,z_2),
			\end{cases}
		\end{eqnarray}
	\end{subequations}
	
	\begin{eqnarray*}
		\partial_{z_1}\Phi(y_1,y_2,z_1,z_2)=
		\begin{cases}
			0, & \text{if } (y_1,y_2)\in R_1(z_1,z_2),\\
			\displaystyle \frac{\partial_1\Psi(z_1,y_2)-z_1\partial_{11}\Psi(z_1,y_2)}{2}\left(1-\left(\frac{y_1}{z_1}\right)^2\right),  & \text{if } (y_1,y_2) \in R_2(z_1,z_2),\\
			0, & \text{if } (y_1,y_2)\in R_3(z_1,z_2),\\
			\displaystyle \left(\frac{\partial_1\Psi(z_1,z_2)-z_1\partial_{11}\Psi(z_1,z_2)}{2}-\frac{z_2}{4}(\partial_{12}\Psi(z_1,z_2)\right. & \text{}\\
			\displaystyle\left.-z_1\partial_{112}\Psi(z_1,z_2))\left(1-\left(\frac{y_2}{z_2}\right)^2\right)\right)
			\left(1-\left(\frac{y_1}{z_1}\right)^2\right),
			& \text{if } (y_1,y_2)\in R_4(z_1,z_2),
		\end{cases}
	\end{eqnarray*}
	\begin{eqnarray*}
		\partial_{z_2}\Phi(y_1,y_2,z_1,z_2)=
		\begin{cases}
			0, & \text{if } (y_1,y_2)\in R_1(z_1,z_2),\\
			0,  & \text{if } (y_1,y_2) \in R_2(z_1,z_2),\\
			\displaystyle \frac{\partial_2\Psi(y_1,z_2)-z_2\partial_{22}\Psi(y_1,z_2)}{2}\left(1-\left(\frac{y_2}{z_2}\right)^2\right), & \text{if } (y_1,y_2)\in R_3(z_1,z_2),\\
			\displaystyle \left(\frac{\partial_2\Psi(z_1,z_2)-z_2\partial_{22}\Psi(z_1,z_2)}{2}-\frac{z_1}{4}(\partial_{12}\Psi(z_1,z_2)\right. & \text{}\\
			\displaystyle\left.-z_2\partial_{122}\Psi(z_1,z_2))\left(1-\left(\frac{y_1}{z_1}\right)^2\right)\right)
			\left(1-\left(\frac{y_2}{z_2}\right)^2\right) 
			& \text{if } (y_1,y_2)\in R_4(z_1,z_2).
		\end{cases}
	\end{eqnarray*}
	From \ref{Psi3} we easily get $\partial_{y_i}\Phi\geq0$ and $\partial_{z_i}\Phi\geq0$ for $i=1,2$, hence \ref{hyp:phi5} holds. Furthermore, we deduce
	\[0=\Phi(0,0,0,0)\leq \Phi(y_1,y_2,z_1,z_2)\leq \Phi(y_1\vee z_1,y_2\vee z_2,z_1,z_2)=\Psi(y_1\vee z_1,y_2\vee z_2)\leq \sup_{[0,+\infty)^2}\Psi.\]
	Since the supremum in the last term is finite by \ref{Psi2}, we get that $\Phi$ is bounded and so in particular \ref{hyp:phi2} is fulfilled. It remains to verify \ref{hyp:phi3}. Let us check that $\partial_{y_1}\Phi$ is bounded uniformly with respect to $z$, the computation for $\partial_{y_2}\Phi$ being analogous. In $R_1$ and in $R_2$ the fact is trivial by exploiting \ref{Psi2}, while in $R_3$  we can estimate by using \ref{Psi4}, obtaining
	\begin{equation*}
		|\partial_{y_1}\Phi(y_1,y_2,z_1,z_2)|\le |\partial_1\Psi(y_1,z_2)|+z_2|\partial_{12}\Psi(y_1,z_2)|\le C.
	\end{equation*}
	Similar computations yield the result also in $R_4$, and we conclude.
	
	Let us now check \ref{hyp:phi6}-\ref{hyp:phi9}, assuming in addition \ref{Psi5} and \ref{Psi6}. Notice that we already proved \ref{hyp:phi6}, while we observe that \ref{hyp:phi9} is automatically satisfied whenever \ref{Psi5} is in force. Moreover, \ref{hyp:phi7} follows from \ref{Psi3} and \ref{Psi6} by observing that the partial derivatives $\partial_{y_i}\Phi$ are continuous and by exploiting the expression
	\begin{eqnarray*}
		\partial_{y_1,y_1}\Phi(y_1,y_2,z_1,z_2)=
		\begin{cases}
			\partial_{11}\Psi(y_1,y_2), & \text{if } (y_1,y_2)\in R_1(z_1,z_2),\\
			\displaystyle \partial_1\Psi(z_1,y_2)\frac{1}{z_1},& \text{if } (y_1,y_2)\in R_2(z_1,z_2),\\
			\displaystyle\partial_{11}\Psi(y_1,z_2)-\frac{z_2}{2}\partial_{112}\Psi(y_1,z_2)\left(1-\left(\frac{y_2}{z_2}\right)^2\right),  & \text{if } (y_1,y_2)\in R_3(z_1,z_2),\\
			\displaystyle \left(\partial_1\Psi(z_1,z_2)-\frac{z_2}{2}\partial_{12}\Psi(z_1,z_2)\left(1-\left(\frac{y_2}{z_2}\right)^2\right)\right)\frac{1}{z_1},
			& \text{if } (y_1,y_2)\in R_4(z_1,z_2),
		\end{cases}
	\end{eqnarray*}
	and analogously for $\partial_{y_2,y_2}\Phi$. Finally, \ref{Psi3} yields that the mixed derivative
	\begin{eqnarray*}
		\partial_{y_1,y_2}\Phi(y_1,y_2,z_1,z_2)=
		\begin{cases}
			\partial_{12}\Psi(y_1,y_2), & \text{if } (y_1,y_2)\in R_1(z_1,z_2),\\
			\displaystyle \partial_{12}\Psi(z_1,y_2)\frac{y_1}{z_1},& \text{if } (y_1,y_2)\in R_2(z_1,z_2),\\
			\displaystyle\partial_{12}\Psi(y_1,z_2)\frac{y_2}{z_2},  & \text{if } (y_1,y_2)\in R_3(z_1,z_2),\\
			\displaystyle \partial_{12}\Psi(z_1,z_2)\frac{y_1}{z_1}\frac{y_2}{z_2},
			& \text{if } (y_1,y_2)\in R_4(z_1,z_2),
		\end{cases}
	\end{eqnarray*}
	is nonnegative, whence also \ref{hyp:phi8} holds true.
	
	\subsection{Construction of the non-potential tension $\mc T$} We adopt the same notations of the previous section. Consider a loading tension $\mc S=(\mc S_1,\mc S_2)\colon [0,+\infty)^2\to \R^2$ satisfying
	\begin{enumerate} [label=\textup{($\mc S$\arabic*)}, start=1]	
		\item  \label{S1}$\mc S$ is continuous and bounded in $[0,+\infty)^2$.
	\end{enumerate}		
	
	The tension $\mc T=(\mc T_1,\mc T_2)\colon [0,+\infty)^2\times [0,+\infty)^2\to \R^2$ is then defined as
	\begin{subequations}\label{eq:tensions}
		\begin{eqnarray}
			\mc T_1(y_1,y_2,z_1,z_2)=
			\begin{cases}
				\mc S_1(y_1,y_2), & \text{if } (y_1,y_2)\in R_1(z_1,z_2),\\
				\displaystyle \mc S_1(z_1,y_2)\frac{y_1}{z_1},& \text{if } (y_1,y_2)\in R_2(z_1,z_2),\\
				\displaystyle \mc S_1(y_1,z_2),  & \text{if } (y_1,y_2)\in R_3(z_1,z_2),\\
				\displaystyle \mc S_1(z_1,z_2)\frac{y_1}{z_1},
				& \text{if } (y_1,y_2)\in R_4(z_1,z_2),
			\end{cases}
		\end{eqnarray}
		\begin{eqnarray}
			\mc T_2(y_1,y_2,z_1,z_2)=
			\begin{cases}
				\mc S_2(y_1,y_2), & \text{if } (y_1,y_2)\in R_1(z_1,z_2),\\
				\displaystyle \mc S_2(z_1,y_2),  & \text{if } (y_1,y_2) \in R_2(z_1,z_2),\\
				\displaystyle \mc S_2(y_1,z_2)\frac{y_2}{z_2},& \text{if } (y_1,y_2)\in R_3(z_1,z_2),\\
				\displaystyle \mc S_2(z_1,z_2)\frac{y_2}{z_2},
				& \text{if } (y_1,y_2)\in R_4(z_1,z_2).
			\end{cases}
		\end{eqnarray}
	\end{subequations}
	
	The validity of \ref{hyp:T1} and \ref{hyp:T2} is immediate. One may also directly verify the physical requirements \ref{hyp:T3}-\ref{hyp:T6} assuming in addition that
	\begin{enumerate} [label=\textup{($\mc S$\arabic*)}, start=2]	
		\item  \label{S2}$\mc S$ is valued in $[0,+\infty)^2$ and vanishes at infinity;
		\item   \label{S3} $\mc S_1$ and $\mc S_2$ are nonincreasing in each component ($\mc S_i$ nonincreasing with respect to $y_i$ only in $[\bar z_i,+\infty)$ in case of an intrinsic cohesive model).
	\end{enumerate}
	
	If $\mc S=\nabla\Psi$ for some suitable density $\Psi$, notice that the expressions \eqref{eq:tensions} coincide with \eqref{eq:gradient} in the (not interesting) case of uncoupled directions, namely when $\partial_{12}\Psi\equiv 0$, or equivalently $\Psi(y_1,y_2)=\psi_1(y_1)+\psi_2(y_2)$. This is not a coincidence, indeed it is easy to check that the pair $(\mc T_1,\mc T_2)$ defines a gradient (with respect to $y$) just in this occurence. 
	
	\subsection{Examples of loading density $\Psi$ }
	
	We now present some specific instances of functions $\Psi$. They all fall within the following abstract structure. 
	
	Let us consider two one-dimensional cohesive densities (see \cite{BonCavFredRiva,Riv}) $\psi_i\in C^{1,1}_{\rm loc}([0,+\infty))$, $i=1,2$, namely each $\psi_i$ is bounded, Lipschitz, nondecreasing, and satisfies $\psi_i(0)=0$ and 
	\begin{subequations}\label{eq:psiassumptions}
		\begin{align}
			&\psi_i'(y)-y\psi''_i(y)\geq0,\label{psia}\\
			&\sup_{y>0}y\psi_i'(y)<+\infty \label{psib}.
		\end{align}
	\end{subequations}
	
	Assuming that $\sup \psi_i=1$, we also consider a nonnegative function $F\in C^{2,1}([0,1]^2)$ such that $F(0,0)=0$, $\partial_i F\geq0$, $\partial_{ii}F\leq0$, $\partial_{12}F\leq0$, $\partial_{iij} F\ge 0$ in $[0,1]^2$ for $i,j=1,2$, $i\neq j$.
	
	The density $\Psi$ is now defined as
	\begin{equation}\label{eq:examplePsiF}
		\Psi(y_1,y_2):=F(\psi_1(y_1),\psi_2(y_2)).
	\end{equation}
	
	Let us now check that it satisfies conditions \ref{Psi1}--\ref{Psi4}. It is trivial to see that \ref{Psi1} and \ref{Psi2} hold true, while by simple computations we deduce
	\begin{equation}\label{eq:comp1}
		\partial_i\Psi(y_1,y_2)=\partial_i F(\psi_1(y_1),\psi_2(y_2))\psi'_i(y_i)\ge 0,
	\end{equation}
	since it is the product of nonnegative terms. Analogously we obtain
	\begin{equation}\label{eq:comp2}
		\partial_{12}\Psi(y_1,y_2)=\partial_{12} F(\psi_1(y_1),\psi_2(y_2))\psi'_1(y_1)\psi'_2(y_2)\le 0.
	\end{equation}
	Moreover, we have
	\begin{align}\label{eq:comp3}
		&\partial_i\Psi(y_1,y_2)-y_i \partial_{ii}\Psi(y_1,y_2)\nonumber\\
		=&\partial_i F(\psi_1(y_1),\psi_2(y_2))\Big(\psi_i'(y_i)-y_i \psi''_i(y_i)\Big)-y_i \psi'_i(y_i)^2\partial_{ii} F(\psi_1(y_1),\psi_2(y_2))\ge 0,
	\end{align}
	where we used \eqref{psia} and the assumptions on $F$. Similarly, there holds
	\begin{align}\label{eq:comp4}
		&\partial_{12}\Psi(y_1,y_2)-y_i \partial_{iij}\Psi(y_1,y_2)\nonumber\\
		=& \psi_j'(y_j)\Big((\psi_i'(y_i)-y_i \psi''_i(y_i))\partial_{12} F(\psi_1(y_1),\psi_2(y_2))-y_i(\psi_i'(y_i))^2\partial_{iij} F(\psi_1(y_1),\psi_2(y_2))\Big)\le 0.
	\end{align}
	Combining \eqref{eq:comp1}, \eqref{eq:comp2}, \eqref{eq:comp3} and \eqref{eq:comp4} we finally infer \ref{Psi3}. By exploiting \eqref{psib} we also show \ref{Psi4}:
	\begin{align*}
		&\sup_{y_1,y_2>0}(y_1+y_2)|\partial_{12}\Psi(y_1,y_2)|= \sup_{y_1,y_2>0}(y_1+y_2)|\partial_{12} F(\psi_1(y_1),\psi_2(y_2))|\psi'_1(y_1)\psi'_2(y_2)\\
		\le& \max_{[0,1]^2}|\partial_{12} F|\left(\sup \psi_2' \sup_{y_1\ge0}y_1\psi_1'(y_1)+\sup \psi_1' \sup_{y_2\ge0}y_2\psi_2'(y_2)\right)<+\infty.
	\end{align*}
	If in addition $F$ satisfies
	\begin{equation}\label{eq:Fvanishing}
		\partial_1 F(\xi_1,1)=\partial_2 F(1,\xi_2)=0,\qquad\text{for all }(\xi_1,\xi_2)\in [0,1]^2,
	\end{equation}
	then also \ref{Psi5} can be directly obtained.
	
	The validity of \ref{Psi6} can be instead deduced if the functions $\psi_i$ are concave in $[0,+\infty)$ (or, in view of Remark~\ref{rmk:initelast}, in $[\bar z_i,+\infty)$). Indeed, recalling the assumptions on $F$, in this case we infer
	\begin{align*}
		\partial_{ii}\Psi(y_1,y_2)&= \psi_i''(y_i)\partial_i F(\psi_1(y_1),\psi_2(y_2))+(\psi_i'(y_i))^2\partial_{ii} F(\psi_1(y_1),\psi_2(y_2))\\
		&\le 0\le \frac{y_j}{2}\psi_j'(y_j)\Big(\psi_i''(y_i)\partial_{12} F(\psi_1(y_1),\psi_2(y_2))+(\psi_i'(y_i))^2\partial_{iij} F(\psi_1(y_1),\psi_2(y_2))\Big) \\
		&=\frac{y_j}{2}\partial_{iij}\Psi(y_1,y_2).
	\end{align*}
	
	The simplest, although effective, choice of auxiliary function $F$ satisfying the previous assumptions is given by
	\begin{equation}\label{eq:F}
		F(\xi_1,\xi_2):=\Phi_1\xi_1+\Phi_2\xi_2-\alpha \xi_1\xi_2,
	\end{equation}
	where $\Phi_1,\Phi_2\ge 0$ are nonnegative constants representing complete delamination energies, while the parameter $\alpha$ satisfies $0\le \alpha\le \Phi_1\wedge \Phi_2$. However, observe that \eqref{eq:Fvanishing}, needed to preserve a realistic behaviour, is in force if and only if
	\begin{equation}\label{eq:phi12}
		\alpha=\Phi_1=\Phi_2.
	\end{equation}
	This represents a first limitation of potential-based models, indeed the above constraint represents materials whose delamination energies are equal in both directions.
	
	We now consider examples of one-dimensional densities fulfilling the required assumptions.
	
	\subsubsection*{Negative exponentials} The first example is given by 
	\begin{equation*}
		\psi_{\rm exp}(y)=1-e^{-\rho y},
	\end{equation*}
	with $\rho>0$. It features an infinite delamination opening, namely it reaches its supremum just asymptotically as $y\to +\infty$. 
	
	\subsubsection*{Polynomial behaviour} A second simple example with finite delamination opening $\delta>0$ is the cubic law
	\begin{equation*}
		\psi_{\rm cub}(y)=\begin{cases}\displaystyle \frac y\delta \left(\left(\frac y\delta\right)^2-3 \frac y\delta+3\right), & \text{if }y\in [0,\delta),\\
			1,& \text{if }y\ge \delta.
		\end{cases}
	\end{equation*}
	
	\subsubsection*{Intrinsic densities}
	
	Starting from \emph{concave} densities like the previous two examples one can always construct a whole family of new densities, featuring an initial quadratic (i.e. elastic) behaviour and thus suitable to describe the so-called intrinsic cohesive models. Given a concave function $\psi\in C^{1,1}_{\rm loc}([0,+\infty))$ bounded, Lipschitz, nondecreasing, satisfying $\psi(0)=0$ and \eqref{psib}, for any parameter $\eps>0$ it is enough to define
	\begin{equation*}
		\psi_\eps(y):=\begin{cases}
			\displaystyle\frac{y^2}{2\eps},&\text{if }y\in [0,\bar z_\eps],\\
			\displaystyle \psi(y)-\psi(\bar z_\eps)+\frac{\bar z^2_\eps}{2\eps},&\text{if }y\in(\bar z_\eps,+\infty),
		\end{cases}
	\end{equation*}
	where $\bar z_\eps>0$ is the unique positive number satisfying $\bar z_\eps=\eps\psi'(\bar z_\eps)$. Notice that conditions \eqref{eq:psiassumptions}, as well as the above assumptions, are satisfied by $\psi_\eps$; moreover, it is concave in $[\bar z_\eps,+\infty)$. We actually observe that, in order to have $\sup\psi_\eps=1$, one shall rescale the function with the constant $(1-\psi(\bar z_\eps)+{\bar z^2_\eps}/{2\eps})^{-1}$.
	
	\subsubsection*{PPR model} We now discuss a more involved example introduced in \cite{PPR}--and called PPR from the name of the authors--for an analogous model of fracture. We first introduce the intrinsic cohesive zone model, characterized by an initial elastic behaviour. For $i=1,2$, let us consider the parameters $\alpha_i>1$, $\Phi_i,\sigma_i>0$ and $\lambda_i\in \left(0,\frac{1}{\sqrt{\alpha_i}}\right)$, representing shape index (characterizing material softening responses), complete delamination energy, cohesive strenght, and initial slope indicator in direction $i$, respectively. We then define the constants
	\begin{equation}\label{eq:mi}
		m_i:=\frac{\alpha_i(\alpha_i-1)\lambda_i^2}{1-\alpha_i\lambda_i^2}>0,
	\end{equation}
	the final slip width (or delamination opening)
	\begin{equation}\label{eq:delta_i}
		\delta_i:=\frac{\Phi_i}{\sigma_i}\alpha_i\lambda_i(1-\lambda_i)^{\alpha_i-1}\left(1+\frac{\alpha_i}{m_i}\right)\left(1+\lambda_i\frac{\alpha_i}{m_i}\right)^{m_i-1}>0,
	\end{equation}
	and the energy constants
	\begin{equation}\label{eq:Gammai}
		\Gamma_1:=\begin{cases}
			\displaystyle	-\Phi_1\left(\frac{\alpha_1}{m_1}\right)^{m_1},&\text{if }\Phi_1\ge \Phi_2,\\
			\displaystyle	\left(\frac{\alpha_1}{m_1}\right)^{m_1},&\text{if }\Phi_1< \Phi_2,
		\end{cases}\qquad \Gamma_2:=\begin{cases}
			\displaystyle	\left(\frac{\alpha_2}{m_2}\right)^{m_2},&\text{if }\Phi_1\ge \Phi_2,\\
			\displaystyle	-\Phi_2\left(\frac{\alpha_2}{m_2}\right)^{m_2},&\text{if }\Phi_1< \Phi_2. 
		\end{cases}
	\end{equation}
	The loading density in the PPR model is then defined as (see (8) in \cite{PPR}) 
	\begin{equation*}
		\begin{aligned}
			\Psi_{PPR}(y_1,y_2):=\Phi_1\wedge \Phi_2+&\left[\Gamma_1\left(\left(1-\frac{y_1}{\delta_1}\right)^+\right)^{\alpha_1}\left(\frac{m_1}{\alpha_1}+\frac{y_1}{\delta_1}\right)^{m_1}+(\Phi_1-\Phi_2)^+\right]\times\\
			&\left[\Gamma_2\left(\left(1-\frac{y_2}{\delta_2}\right)^+\right)^{\alpha_2}\left(\frac{m_2}{\alpha_2}+\frac{y_2}{\delta_2}\right)^{m_2}+(\Phi_2-\Phi_1)^+\right].
		\end{aligned}
	\end{equation*}
	
	We observe that, after some simple manipulation, we can rewrite the PPR density in the form \eqref{eq:examplePsiF}, with $F$ as in \eqref{eq:F}. Indeed we have
	\begin{equation}\label{eq:PPrrewritten}
		\Psi_{PPR}(y_1,y_2)=\Phi_1\psi_1(y_1)+\Phi_2\psi_2(y_2)-\Phi_1\vee\Phi_2 \psi_1(y_1)\psi_2(y_2),
	\end{equation}
	where the one-dimensional densities are given by 
	\begin{equation}\label{eq:1dPPR}
		\psi_i(y_i):=1-\left(\left(1-\frac{y_i}{\delta_i}\right)^+\right)^{\alpha_i}\left(1+\frac{\alpha_i}{m_i}\frac{y_i}{\delta_i}\right)^{m_i}.
	\end{equation}
	
	\begin{rmk}\label{rmk:Phi12}
		Comparing \eqref{eq:PPrrewritten} with \eqref{eq:F}, we observe that the constraint $\alpha\le \Phi_1\wedge \Phi_2$ here yields $\Phi_1=\Phi_2$ (see also \eqref{eq:phi12}), namely we are forced to consider equal delamination energy in both directions in order to keep the potential-based structure of the model. This drawback was already observed in \cite[Section 2.2]{PPR} (see also \cite{ParkPaulino}), and it is an intrinsic limitation of potential-based models, which can be overcome by means of non potential-based ones.
	\end{rmk}
	
	We now show that the one-dimensional PPR densities fulfil our assumptions. 
	\begin{lemma}\label{lemma:PPR}
		If $\lambda_i\le \frac{1}{\sqrt{2\alpha_i-1}}$, then the densities defined in \eqref{eq:1dPPR} are nonnegative and satisfy the following properties:
		\begin{itemize}
			\item $\psi_i(0)=\psi_i'(0)=0$;
			\item $\psi_i\in C^{1,1}([0,+\infty))$ is nondecreasing and bounded;
			\item assumption \eqref{eq:psiassumptions} holds true;
			\item $\psi_i$ is concave in $[\delta_i\lambda_i,+\infty)$.
		\end{itemize}
	\end{lemma}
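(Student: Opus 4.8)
The plan is to fix $i$, drop the subscript, and pass to the dimensionless opening $t:=y/\delta_i\in[0,1]$, so that on $[0,\delta_i]$ one has $\psi_i(y)=1-h(t)$ with
\[
h(t):=(1-t)^{\alpha_i}\Bigl(1+\tfrac{\alpha_i}{m_i}\,t\Bigr)^{m_i},
\]
while $\psi_i\equiv 1$ on $[\delta_i,+\infty)$. Every claimed property then reduces to an elementary sign/monotonicity statement about $h,h',h''$ on $[0,1]$, the only genuinely non-trivial ingredient being two algebraic collapses that exploit the precise value \eqref{eq:mi} of $m_i$. I would start from the trivial evaluations $h(0)=1$ and $h(1)=0$, which give $\psi_i(0)=0$, $\psi_i(\delta_i)=1$ and $0\le\psi_i\le 1$; thus $\psi_i$ is bounded with $\sup\psi_i=1$, and, once monotonicity is in hand, nonnegative.

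The second step is the first derivative. A direct computation yields
\[
h'(t)=-\frac{\alpha_i(\alpha_i+m_i)}{m_i}\,t\,(1-t)^{\alpha_i-1}\Bigl(1+\tfrac{\alpha_i}{m_i}\,t\Bigr)^{m_i-1}\le 0\quad\text{on }[0,1],
\]
so $\psi_i'=-h'(\cdot/\delta_i)/\delta_i\ge 0$ and $\psi_i$ is nondecreasing; the prefactor $t$ gives $\psi_i'(0)=0$, while the factor $(1-t)^{\alpha_i-1}$ together with $\alpha_i>1$ gives $h'(1^-)=0$, i.e. $\psi_i'$ glues continuously at $\delta_i$, whence $\psi_i\in C^1([0,+\infty))$; that $\psi_i'$ is moreover locally Lipschitz, hence $\psi_i\in C^{1,1}$, is read off the formula for $h''$ below, the behaviour $h''(t)=O\bigl((1-t)^{\alpha_i-2}\bigr)$ at the junction being harmless here. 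Assumption \eqref{psib} follows at once, since on $(0,\delta_i)$ one has $y\psi_i'(y)=-t\,h'(t)=\tfrac{\alpha_i(\alpha_i+m_i)}{m_i}t^2(1-t)^{\alpha_i-1}(1+\tfrac{\alpha_i}{m_i}t)^{m_i-1}$, a continuous function on the compact $[0,1]$, while $y\psi_i'(y)\equiv0$ on $[\delta_i,+\infty)$.

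The heart of the proof is the second derivative. Differentiating once more and \emph{then} inserting \eqref{eq:mi} — through the identity $\tfrac{\alpha_i(\alpha_i+m_i-1)}{m_i}=\tfrac1{\lambda_i^2}$ (equivalently $\alpha_i+m_i-1=\tfrac{\alpha_i-1}{1-\alpha_i\lambda_i^2}$) — the a priori quadratic-in-$t$ cofactor collapses, giving
\[
h''(t)=-\frac{\alpha_i(\alpha_i+m_i)}{m_i}\,(1-t)^{\alpha_i-2}\Bigl(1+\tfrac{\alpha_i}{m_i}\,t\Bigr)^{m_i-2}\Bigl(1-\tfrac{t^2}{\lambda_i^2}\Bigr)\qquad\text{a.e. on }[0,1].
\]
Since the first two factors are positive, $\psi_i''\le 0$ precisely on $\{t\ge\lambda_i\}$, i.e. on $[\delta_i\lambda_i,\delta_i]$; combined with $\psi_i''\equiv 0$ on $[\delta_i,+\infty)$ and the $C^1$-matching already shown, this gives concavity of $\psi_i$ on $[\delta_i\lambda_i,+\infty)$ (and, in the language of Remark~\ref{rmk:initelast}, on $[\bar z_i,+\infty)$ with $\bar z_i=\delta_i\lambda_i$). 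For \eqref{psia} I would write $\psi_i'(y)-y\psi_i''(y)=\tfrac1{\delta_i}\bigl(t\,h''(t)-h'(t)\bigr)$ and factor
\[
t\,h''(t)-h'(t)=\frac{\alpha_i(\alpha_i+m_i)}{m_i}\,t^2(1-t)^{\alpha_i-2}\Bigl(1+\tfrac{\alpha_i}{m_i}\,t\Bigr)^{m_i-2}\Bigl[\Bigl(\tfrac{\alpha_i}{m_i}-1\Bigr)+\tfrac{\alpha_i}{m_i}(\alpha_i+m_i-2)\,t\Bigr].
\]
The bracket is affine in $t$, with value $\tfrac{\alpha_i-m_i}{m_i}$ at $t=0$ and $\tfrac{(\alpha_i-1)(\alpha_i+m_i)}{m_i}>0$ at $t=1$; hence it is $\ge 0$ on $[0,1]$ exactly when $m_i\le\alpha_i$, which by \eqref{eq:mi} is equivalent to $(2\alpha_i-1)\lambda_i^2\le1$, i.e. precisely to the hypothesis $\lambda_i\le 1/\sqrt{2\alpha_i-1}$. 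This closes the argument.

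The main obstacle is not conceptual but computational: obtaining $h'$ and especially $h''$ cleanly (powers with possibly non-integer and negative exponents), and, above all, recognizing the two simplifications tied to the definition of $m_i$ — the collapse of the quadratic cofactor to $1-t^2/\lambda_i^2$ and the endpoint evaluation of the affine bracket — since it is exactly this that ties the hypothesis $\lambda_i\le 1/\sqrt{2\alpha_i-1}$ to \eqref{psia}. A secondary point requiring light care is the regularity at the complete-delamination point $\delta_i$: the $C^1$-matching is immediate from $\alpha_i>1$ ($h'(1^-)=0$), and the Lipschitz continuity of $\psi_i'$ follows from $h''(t)=O\bigl((1-t)^{\alpha_i-2}\bigr)$, which is bounded near $t=1$ when $\alpha_i\ge 2$ (and in the range $1<\alpha_i<2$ yields the natural regularity $\psi_i\in C^{1,\alpha_i-1}$).
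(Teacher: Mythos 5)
Your proof is correct and essentially the paper's argument: compute $\psi_i'$, $\psi_i''$, and $\psi_i'-y\psi_i''$, simplify the quadratic cofactor via \eqref{eq:mi}, and check signs. The dimensionless variable $t=y/\delta_i$ and the substitution $\psi_i=1-h$ merely streamline the same algebra — your $h'$, $h''$, $th''-h'$ are exactly the paper's $-\delta_i\psi_i'$, $-\delta_i^2\psi_i''$, $\delta_i(\psi_i'-y\psi_i'')$ — and your endpoint evaluation of the affine bracket ($B(0)=\tfrac{\alpha_i-m_i}{m_i}$, $B(1)=\tfrac{(\alpha_i-1)(\alpha_i+m_i)}{m_i}>0$) is precisely what the paper invokes, in the guise ``$\alpha_i\ge m_i$ and $\alpha_i(\alpha_i+m_i-1)\ge m_i$''. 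Where you go beyond the paper is the regularity at the complete-opening point: you rightly note that $\psi_i''$ scales like $(\delta_i-y)^{\alpha_i-2}$, so the stated $C^{1,1}$ regularity genuinely requires $\alpha_i\ge 2$ (for $1<\alpha_i<2$ one only gets $C^{1,\alpha_i-1}$), a restriction the paper leaves implicit when it writes ``one easily deduces the regularity of $\psi_i$''. The only flaw in your writeup is an internal inconsistency: you first declare the behaviour $h''=O((1-t)^{\alpha_i-2})$ ``harmless here'' unconditionally, and only in the last paragraph qualify this with $\alpha_i\ge 2$; the earlier unconditional assertion should be deleted or qualified at once.
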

	\begin{proof}
		By computing the derivatives of $\psi_i$ we obtain
		\begin{align*}
			&\psi_i'(y_i)=\frac{\alpha_i}{\delta_i}\left(1+\frac{\alpha_i}{m_i}\right)\frac{y_i}{\delta_i}\left(\left(1-\frac{y_i}{\delta_i}\right)^+\right)^{\alpha_i-1}\left(1+\frac{\alpha_i}{m_i}\frac{y_i}{\delta_i}\right)^{m_i-1},\\
			&\psi_i''(y_i)=\frac{\alpha_i}{\delta_i^2}\left(1+\frac{\alpha_i}{m_i}\right)\left(\left(1-\frac{y_i}{\delta_i}\right)^+\right)^{\alpha_i-2}\left(1+\frac{\alpha_i}{m_i}\frac{y_i}{\delta_i}\right)^{m_i-2}\left(1-\frac{\alpha_i}{m_i}(\alpha_i+m_i-1)\frac{y_i^2}{\delta_i^2}\right),
		\end{align*}
		whence one easily deduces the regularity of $\psi_i$ and the fact that it is nondecreasing. Moreover, $\psi_i''$ is nonpositive in $\left[\delta_i\sqrt{\frac{m_i}{\alpha_i(\alpha_i+m_i-1)}},+\infty\right)$, namely in $[\delta_i\lambda_i,+\infty)$ by means of \eqref{eq:mi}, and so $\psi_i$ is concave therein.
		
		Using the obvious property $\psi_i(0)=0$, one then infers that $\psi_i$ is nonnegative and bounded (with $\max \psi_i=1$). Also property \eqref{psib} easily follows since $\psi_i'$ is supported in $[0,\delta_i]$, so we are left to check \eqref{psia}. To this aim we compute
		\begin{equation*}
			\psi_i'(y_i)- y_i\psi_i''(y_i)=\frac{\alpha_i}{\delta_i}\left(\!1{+}\frac{\alpha_i}{m_i}\right)\frac{y_i^2}{\delta_i^2}\left(\!\!\left(1{-}\frac{y_i}{\delta_i}\right)^+\!\right)^{\alpha_i-2}\!\!\!\!\left(1{+}\frac{\alpha_i}{m_i}\frac{y_i}{\delta_i}\right)^{m_i-2}\!\!\!\left(\frac{\alpha_i}{m_i}(\alpha_i{+}m_i{-}2)\frac{y_i}{\delta_i}{+}\frac{\alpha_i}{m_i}{-}1\right).
		\end{equation*}
		Observing that, by the expression \eqref{eq:mi}, the assumption $\lambda_i\le (2\alpha_i-1)^{-1/2}$ yields $\alpha_i\ge m_i$, and since under our set of assumptions there holds $\alpha_i(\alpha_i+m_i-1)\ge m_i$, we now infer that the last term within brackets in the above expression is nonnegative (we recall that it is enough to check it for $y_i\in [0,\delta_i]$). This yields \eqref{psia} and we conclude. 
	\end{proof}
	
	Sending the initial slope indicators $\lambda_i$ to zero we recover the extrinsic cohesive zone version of the PPR model, corresponding to a completely anelastic process. By the expressions \eqref{eq:mi}, \eqref{eq:delta_i}, \eqref{eq:Gammai}, after simple computations as $\lambda_i\to 0$ we deduce
	\begin{align*}
		& m_i\to 0,\\
		&\delta_i\to \bar\delta_i:= \frac{\Phi_i}{\sigma_i}\alpha_i,\\
		&\Gamma_1\to \bar\Gamma_1:=\begin{cases}
			\displaystyle	-\Phi_1,&\text{if }\Phi_1\ge \Phi_2,\\
			\displaystyle	1,&\text{if }\Phi_1< \Phi_2,
		\end{cases}\qquad \Gamma_2\to\bar\Gamma_2:=\begin{cases}
			\displaystyle	1,&\text{if }\Phi_1\ge \Phi_2,\\
			\displaystyle	-\Phi_2,&\text{if }\Phi_1< \Phi_2,
		\end{cases}
	\end{align*}
	whence we obtain $\Psi_{PPR}(y_1,y_2)\to \bar\Psi_{PPR}(y_1,y_2)$, where 
	\begin{equation*}
		\bar\Psi_{PPR}(y_1,y_2):=\Phi_1\wedge \Phi_2+\left[\bar\Gamma_1\left(\left(1-\frac{y_1}{\bar\delta_1}\right)^+\right)^{\alpha_1}\!\!\!\!+(\Phi_1{-}\Phi_2)^+\right]\!\!\left[\bar\Gamma_2\left(\left(1-\frac{y_2}{\bar\delta_2}\right)^+\right)^{\alpha_2}\!\!\!\!+(\Phi_2{-}\Phi_1)^+\right]\!.
	\end{equation*}
	Arguing as before we can rewrite $\bar\Psi_{PPR}$ as
	\begin{equation}\label{eq:PPrrewritten2}
		\bar\Psi_{PPR}(y_1,y_2)=\Phi_1\bar\psi_1(y_1)+\Phi_2\bar\psi_2(y_2)-\Phi_1\vee\Phi_2 \bar\psi_1(y_1)\bar\psi_2(y_2),
	\end{equation}
	where the extrinsic one-dimensional densities are given by 
	\begin{equation}\label{eq:1dPPR2}
		\bar\psi_i(y_i):=1-\left(\left(1-\frac{y_i}{\bar\delta_i}\right)^+\right)^{\alpha_i}.
	\end{equation}
	Again, we stress that Remark~\ref{rmk:Phi12} still applies.
	
	We conclude by showing that also in the extrinsic case the one-dimensional densities fulfil our assumptions.
	\begin{lemma}
		The densities defined in \eqref{eq:1dPPR2} are nonnegative and satisfy the following properties:
		\begin{itemize}
			\item $\bar\psi_i(0)=0$;
			\item $\bar\psi_i\in C^{1,1}([0,+\infty))$ is nondecreasing, bounded and concave;
			\item assumption \eqref{eq:psiassumptions} holds true;
			\item $\bar \psi_i$ is concave.
		\end{itemize}
	\end{lemma}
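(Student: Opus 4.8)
The argument is a direct computation, parallel to --- but simpler than --- the proof of Lemma~\ref{lemma:PPR}. First I would record the derivatives of the density $\bar\psi_i$ from \eqref{eq:1dPPR2}: for $y_i\in[0,\bar\delta_i)$ one has
\[
\bar\psi_i'(y_i)=\frac{\alpha_i}{\bar\delta_i}\left(1-\frac{y_i}{\bar\delta_i}\right)^{\alpha_i-1},\qquad
\bar\psi_i''(y_i)=-\frac{\alpha_i(\alpha_i-1)}{\bar\delta_i^{2}}\left(1-\frac{y_i}{\bar\delta_i}\right)^{\alpha_i-2},
\]
while $\bar\psi_i\equiv 1$ on $[\bar\delta_i,+\infty)$, so both derivatives vanish there. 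The equality $\bar\psi_i(0)=0$ and the two-sided bound $0\le\bar\psi_i\le 1$ are immediate from $0\le(1-y_i/\bar\delta_i)^+\le 1$, and the latter already gives boundedness.

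Next I would read the qualitative properties off these formulas. Since $\alpha_i>1$, the factor $(1-y_i/\bar\delta_i)^{\alpha_i-1}$ is nonnegative, so $\bar\psi_i'\ge 0$ and $\bar\psi_i$ is nondecreasing; likewise $\bar\psi_i''\le 0$ on $[0,\bar\delta_i)$ and $\bar\psi_i$ is constant on $[\bar\delta_i,+\infty)$, hence $\bar\psi_i$ is concave on all of $[0,+\infty)$ (this covers both concavity assertions in the statement). For the $C^{1,1}$ regularity, $\bar\psi_i'$ is continuous across $y_i=\bar\delta_i$ since $\alpha_i-1>0$, and --- exactly as in Lemma~\ref{lemma:PPR} --- the Lipschitz bound on $\bar\psi_i'$ near $\bar\delta_i$ reduces to the boundedness of $\bar\psi_i''$ there, i.e. to the exponent $\alpha_i-2$ being nonnegative.

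It remains to check \eqref{eq:psiassumptions}. Property \eqref{psib} is clear because $\bar\psi_i'$ is supported in $[0,\bar\delta_i]$ and bounded there by $\alpha_i/\bar\delta_i$, so $y_i\bar\psi_i'(y_i)\le\alpha_i$. For \eqref{psia}, on $[\bar\delta_i,+\infty)$ both terms vanish, while on $[0,\bar\delta_i)$ I would factor out $\frac{\alpha_i}{\bar\delta_i}(1-y_i/\bar\delta_i)^{\alpha_i-2}\ge 0$ to get
\[
\bar\psi_i'(y_i)-y_i\bar\psi_i''(y_i)=\frac{\alpha_i}{\bar\delta_i}\left(1-\frac{y_i}{\bar\delta_i}\right)^{\alpha_i-2}\left(1+(\alpha_i-2)\frac{y_i}{\bar\delta_i}\right)\ge 0,
\]
since the affine map $y_i\mapsto 1+(\alpha_i-2)y_i/\bar\delta_i$ attains its minimum on $[0,\bar\delta_i]$ at an endpoint, hence is bounded below by $\min\{1,\alpha_i-1\}>0$. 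This delivers all the listed properties. I do not expect any genuine obstacle here; the only point that requires care --- as always for the PPR family --- is the behaviour at the endpoint $y_i=\bar\delta_i$, governed by the exponents $\alpha_i-1$ and $\alpha_i-2$.
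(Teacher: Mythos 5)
Your proof takes the same route as the paper's: compute $\bar\psi_i'$ and $\bar\psi_i''$ and read off the properties, deferring to Lemma~\ref{lemma:PPR} for the regularity. Two remarks are worth making.

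For \eqref{psia} the factorization you carry out is more work than necessary: once you have established $\bar\psi_i'\ge 0$ and $\bar\psi_i''\le 0$, the quantity $\bar\psi_i'(y_i)-y_i\bar\psi_i''(y_i)$ is a sum of two nonnegative terms for every $y_i\ge 0$, and \eqref{psia} is immediate. This is exactly why concavity is listed as a separate bullet in the statement: in the extrinsic case it subsumes \eqref{psia} at no cost, whereas in the intrinsic Lemma~\ref{lemma:PPR} the density is not globally concave and the explicit computation you mimic is genuinely needed.

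On the $C^{1,1}$ claim you correctly note that Lipschitz continuity of $\bar\psi_i'$ at $y_i=\bar\delta_i$ reduces to the exponent $\alpha_i-2$ being nonnegative, but you then treat this as unproblematic. It is not a consequence of the stated hypothesis $\alpha_i>1$: for $1<\alpha_i<2$ the second derivative $\bar\psi_i''$ is unbounded as $y_i\to\bar\delta_i^-$, and $\bar\psi_i'$ is only $(\alpha_i-1)$-H\"older continuous there, so $\bar\psi_i\notin C^{1,1}$. The paper's own proof (and that of Lemma~\ref{lemma:PPR}, which carries the same factor $\left(\left(1-y_i/\delta_i\right)^+\right)^{\alpha_i-2}$ in $\psi_i''$) silently incurs the same restriction; the numerical examples use $\alpha_i=2$, the borderline case. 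To make the statement unconditional one should either add the hypothesis $\alpha_i\ge 2$ or weaken the regularity assertion to $C^{1,(\alpha_i-1)\wedge 1}([0,+\infty))$. You should surface this as a genuine restriction rather than a point that ``does not present an obstacle.''
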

	\begin{proof}
		The result follows by arguing as in Lemma~\ref{lemma:PPR} once we compute
		\begin{align*}
			&\bar\psi_i'(y_i)=\frac{\alpha_i}{\bar\delta_i}\left(\left(1-\frac{y_i}{\bar\delta_i}\right)^+\right)^{\alpha_i-1}\ge 0,\\
			&\bar\psi_i''(y_i)=-\frac{\alpha_i(\alpha_i-1)}{\bar\delta_i^2}\left(\left(1-\frac{y_i}{\bar\delta_i}\right)^+\right)^{\alpha_i-2}\le 0.
		\end{align*}
	\end{proof}
	
	\subsection{Examples of loading tension $\mc S$}
	In non potential-based models a good choice of loading tension is given by 
	\begin{equation}\label{eq:S}
		\mc S(y_1,y_2):=\nabla\Psi(y_1,y_2) \bm{\vee}0_2=(\partial_1\Psi(y_1,y_2)^+, \partial_2\Psi(y_1,y_2)^+),
	\end{equation}
	where $\Psi$ has again the form \eqref{eq:examplePsiF}, but with weaker requirements than before. Here, the function $F$ is just assumed to be of class $C^1$, with no constraints on the sign of its derivatives; while the one-dimensional densities $\psi_i$ do not need to fulfil \eqref{eq:psiassumptions} anymore. Condition \ref{S1} is now a direct consequence of the just listed regularity assumptions.
	
	Conditions \ref{S2} and \ref{S3} require slightly stronger assumptions, as expected. The former is fulfilled whenever $\psi_i'$ vanish as $y_i\to +\infty$ and
	\begin{equation}\label{eq:Fder}
		\partial_1 F(\xi_1,1)\le 0,\quad\text{and}\quad\partial_2 F(1,\xi_2)\le 0,\qquad\text{for all }(\xi_1,\xi_2)\in [0,1]^2.
	\end{equation}
	The latter is instead implied by assuming that $\psi_i$ are concave in $[\bar z_i,+\infty)$, and that $F$ is of class $C^2$ with $\partial_{12}F\le 0$ and $\partial_{ii}F\le 0$. To show it, as before it is enough to compute the derivatives of $S_i$ and verify that they are nonpositive.
	
	The explicit form \eqref{eq:F} of $F$ is included in this setting (in particular \eqref{eq:Fder} is in force) if and only if the parameter $\alpha$ satisfies $\alpha\ge \Phi_1\vee\Phi_2$. For instance, observe that the PPR densities \eqref{eq:PPrrewritten} and \eqref{eq:PPrrewritten2} fit in the non-potential framework even if, differently than the potential-based model (see Remark~\ref{rmk:Phi12}), the two delamination energies $\Phi_1$ and $\Phi_2$ are different.
	
	This fact shows the flexibility of non-variational models with respect to variational ones, in the mixed-mode case.
	
	\section{Representative instances: non trivial loading/unloading paths}\label{sec:numeric}
	In this section, we illustrate the response of a cohesive interface under non trivial loading/unloading/reloading paths in dimension $d=2$ with the prototypical cohesive variable $\mathfrak g(\delta)=(|\delta_1|,|\delta_2|)$. In particular, the respective
	opening separations are assumed to follow the relations:
	\begin{equation*}
		y_1(t)=|a_1\sin(b_1 t)|,\quad y_2(t)=|a_2\sin(b_2 t)|,
	\end{equation*}
	where $t$ is a time-like parameter. We consider the variational intrinsic PPR energy density \eqref{eq:PPrrewritten} and the corresponding non potential-based law induced by \eqref{eq:S}. Without loss of generality the following parameter values are taken: $\alpha_1=\alpha_2=2$, $\sigma_1=\sigma_2=2$ MPa and $\lambda_1	=\lambda_2=0.2$.
	
	The following situations are investigated:
	\begin{itemize}
		\item CASE 1: equal displacement slip evolution ($a_1=a_2=1$, $b_1=b_2=0.2$) and equal energy values $\Phi_1=\Phi_2=2$ N/m (see Fig. \ref{fig_Equal_all}a); 
		\item CASE 2: equal displacement slip values with different phases ($a_1=a_2=1$, $b_1=0.2,\, b_2=0.3$) and equal energy values $\Phi_1=\Phi_2=2$ N/m (see Fig. \ref{fig_Different_v}a);
		\item CASE 3: different displacement slip values with different phases ($a_1=1,\, a_2=3$, $b_1=0.2,\,b_2=0.3$) and different energy values $\Phi_1=6, \, \Phi_2=2$ N/m (see Fig. \ref{fig_All_different}a);
		\item CASE 4: different displacement slip values with different phases ($a_1=1,\, a_2=0.5$, $b_1=0.125,\,b_2=0.4$) but unloading of $y_1$ at fixed $y_2$ value and equal energy values $\Phi_1=\Phi_2=2$ N/m (see Fig. \ref{fig_PPR_funziona}a).
	\end{itemize}
	
	For each case the following plots are given:   evolution of displacement slip values $y_1, y_2$ and history variables $z_1, z_2$, energy evolution (only for the variational model) and traction-displacement slip relations.
	
	Although the energy evolutions of the variational model depicted in Figs. \ref{fig_Equal_all}b, \ref{fig_Different_v}b, \ref{fig_All_different}b seem consistent, the same cannot be said of its derivatives, which from the engineering point of view have a crucial meaning. In all cases, the computed traction–separation relations during the first loading path for the potential-based model and the non potential-based law are equivalent. In CASE 1, the unloading/reloading path is nonlinear for the variational model, see Figs. \ref{fig_Equal_all}c,d, while it is fully linear for the non-potential model Figs. \ref{fig_Equal_all_N}a,b. For CASE 2, the unloading/reloading path of the potential-based model reported in Figs. \ref{fig_Different_v}c,d reveals significant deviations from the expected results as the one obtained with the non potential-based law as depicted in Figs. \ref{fig_Different_v_N}a,b. The problem is further exacerbated in CASE 3, where the potential-based model gives a totally nonphysical response as reported in Figs. \ref{fig_All_different}c,d differently from the non potential-based model that does not allow the change of sign of the stress under partial unloading conditions, see Figs. \ref{fig_All_different_N}a,b.
	Only in the very special CASE 4 the variational model provides a physically reasonable result in the case of partial unloading as illustrated in Fig. \ref{fig_PPR_funziona}.

	\begin{figure}[htpb]
		\centering
		\includegraphics[width=0.95\textwidth]{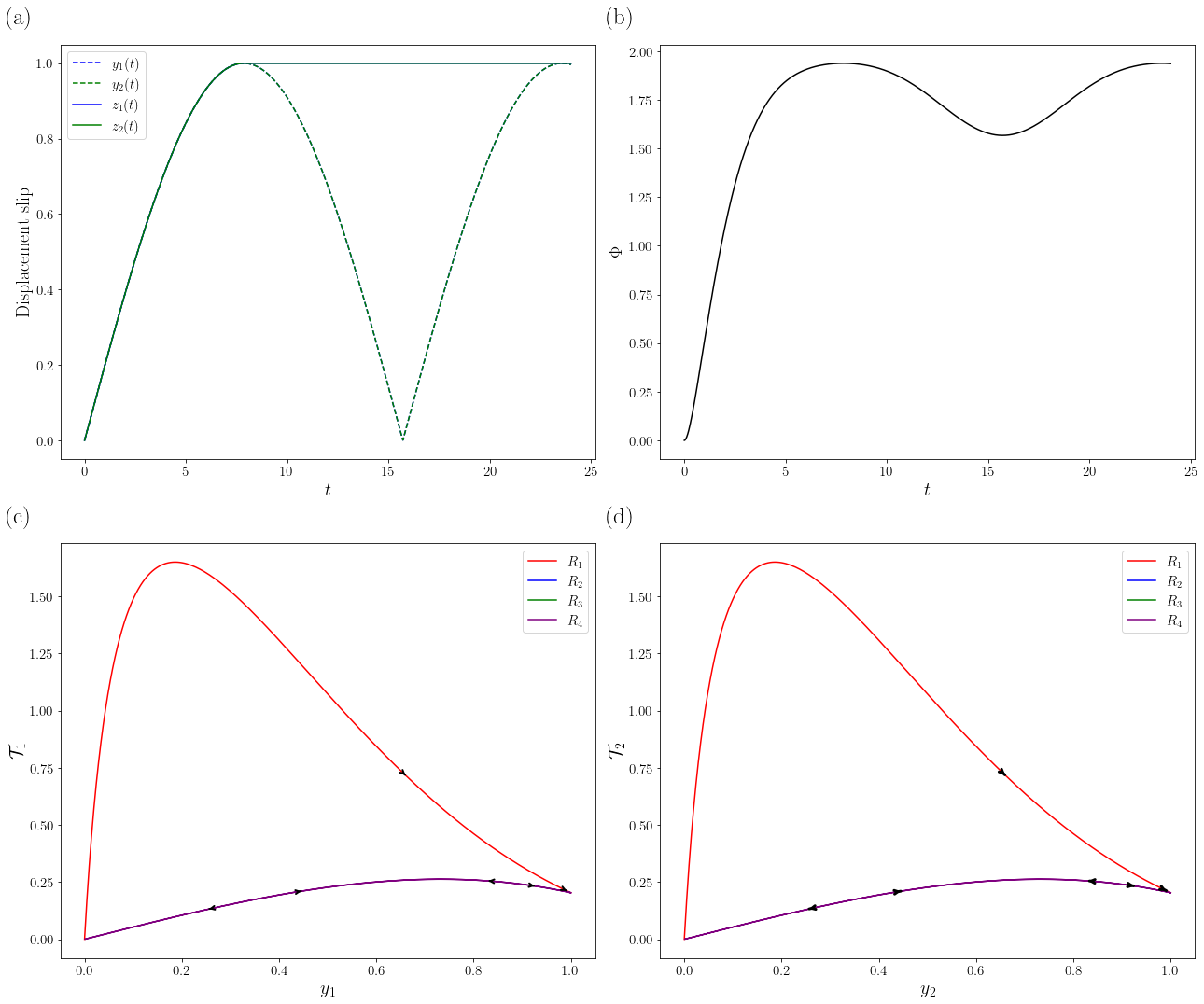}
		\caption{CASE 1: potential-based model. a) Evolution of displacement slip values $y_1, y_2$ and history variables $z_1, z_2$, b) Energy evolution, c) Traction-displacement slip relation $\mc T_1 - y_1$, d) Traction-displacement slip relation $\mc T_2 - y_2$.} 
		\label{fig_Equal_all} 
	\end{figure}
	
	\begin{figure}[htpb]
		\centering
		\includegraphics[width=0.95\textwidth]{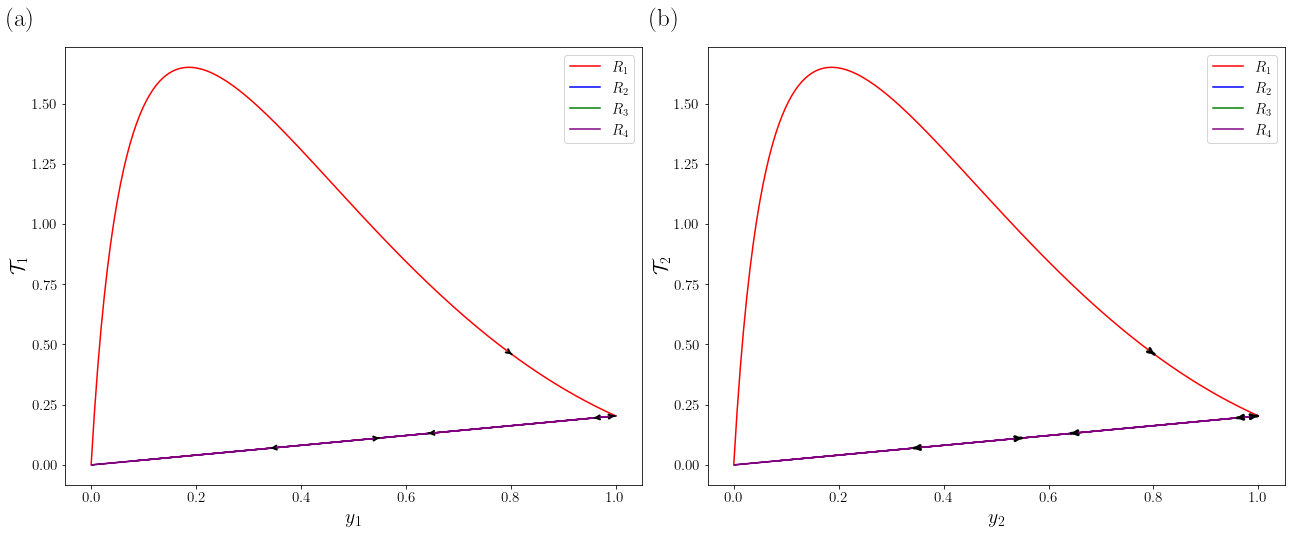}
		\caption{CASE 1: non potential-based model.  a) Traction-displacement slip relation $\mc T_1 - y_1$, b) Traction-displacement slip relation $\mc T_2 - y_2$.} 
		\label{fig_Equal_all_N} 
	\end{figure}

	\begin{figure}[htpb]
		\centering
		\includegraphics[width=0.95\textwidth]{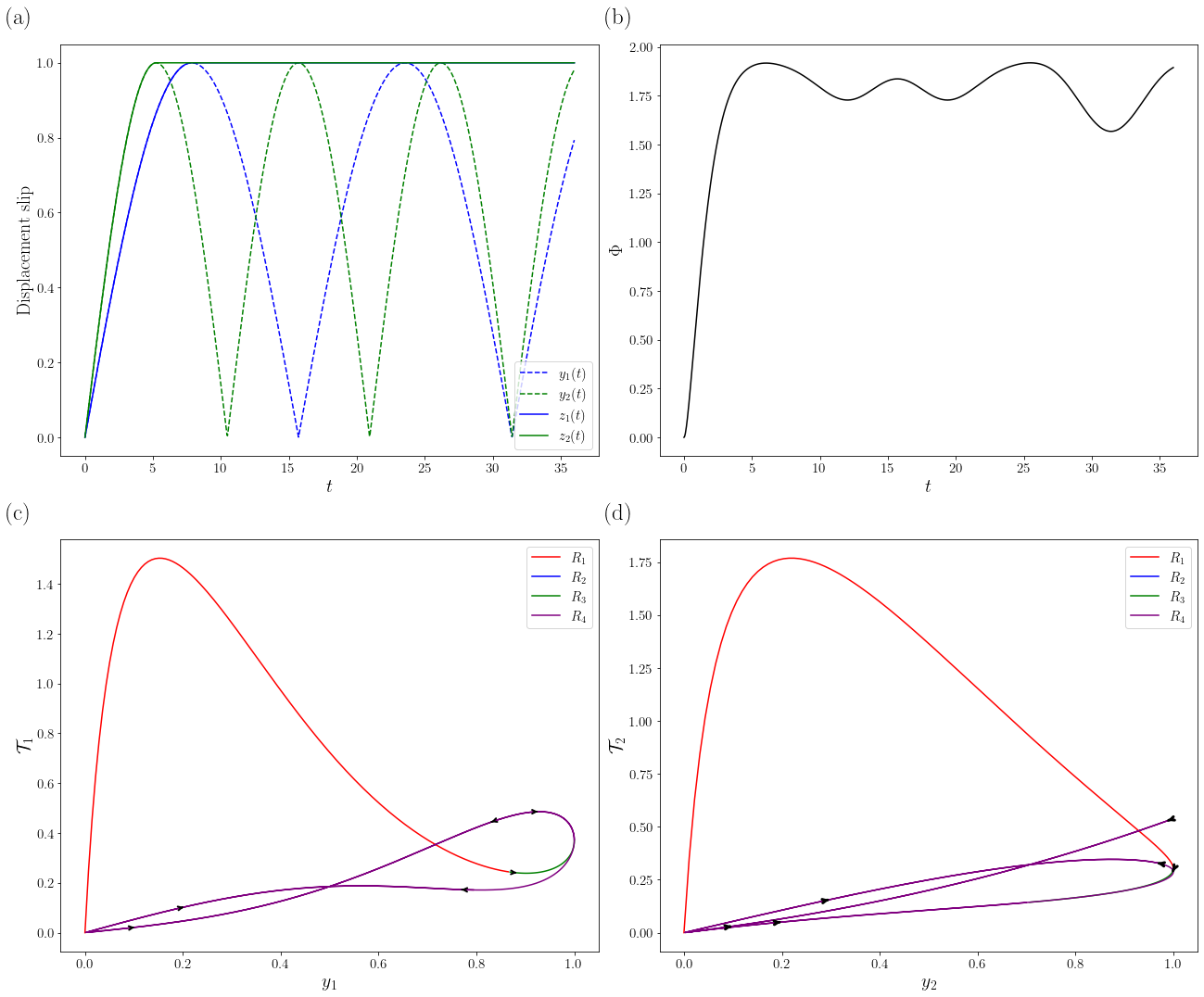}
		\caption{CASE 2: potential-based model. a) Evolution of displacement slip values $y_1, y_2$ and history variables $z_1, z_2$, b) Energy evolution, c) Traction-displacement slip relation $\mc T_1 - y_1$, d) Traction-displacement slip relation $\mc T_2 - y_2$.} 
		\label{fig_Different_v} 
	\end{figure}
	
	\begin{figure}[htpb]
		\centering
		\includegraphics[width=0.95\textwidth]{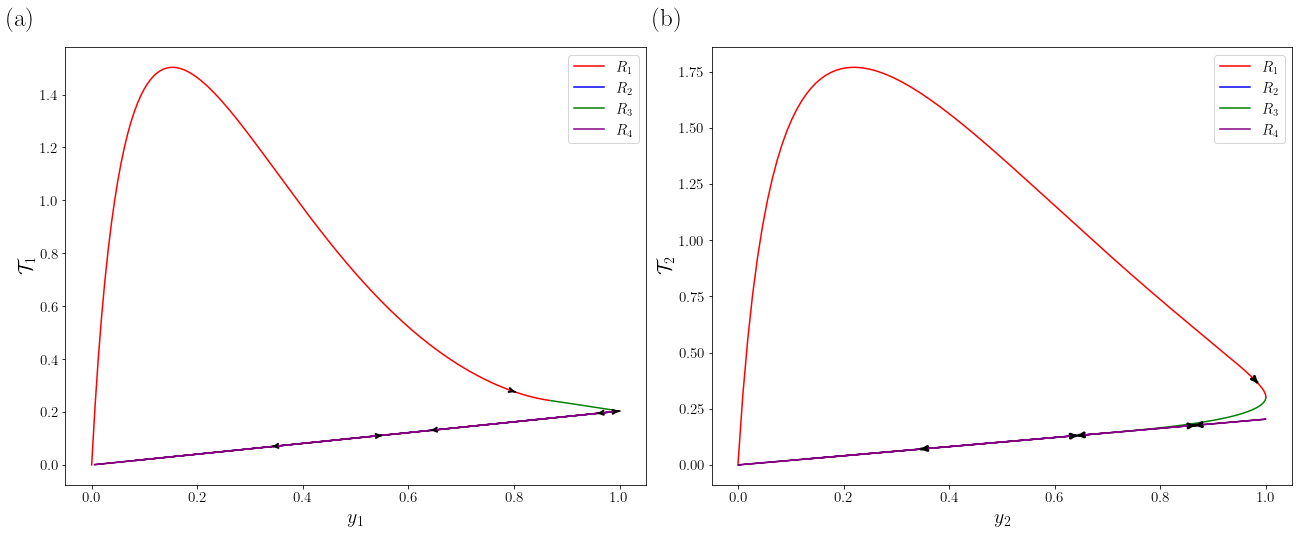}
		\caption{CASE 2: non potential-based model. a) Traction-displacement slip relation $\mc T_1 - y_1$, b) Traction-displacement slip relation $\mc T_2 - y_2$.} 
		\label{fig_Different_v_N} 
	\end{figure}
	
	\begin{figure}[htpb]
		\centering
		\includegraphics[width=0.95\textwidth]{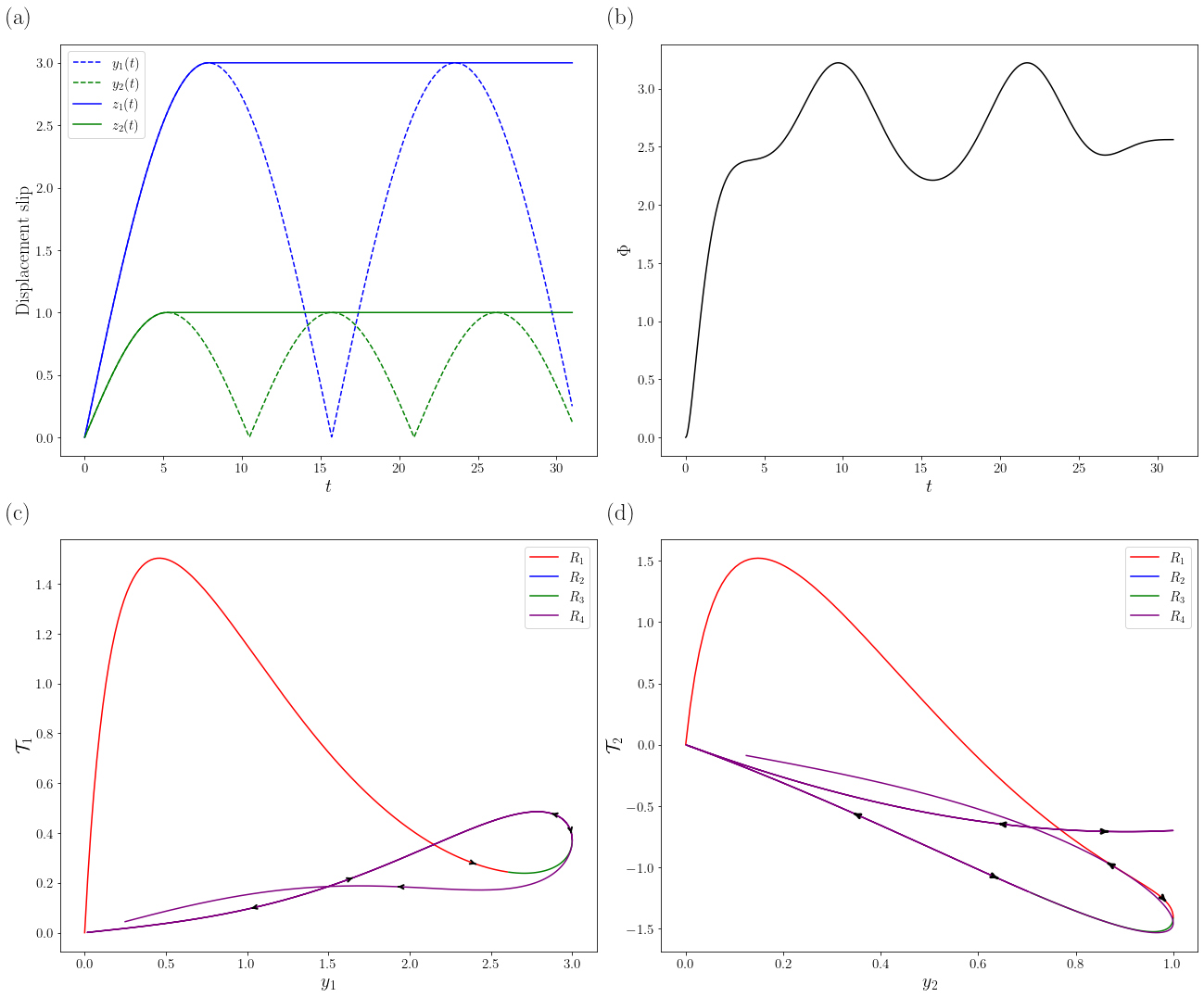}
		\caption{CASE 3: potential-based model. a) Evolution of displacement slip values $y_1, y_2$ and history variables $z_1, z_2$, b) Energy evolution, c) Traction-displacement slip relation $\mc T_1 - y_1$, d) Traction-displacement slip relation $\mc T_2 - y_2$.} 
		\label{fig_All_different} 
	\end{figure}
	
	\begin{figure}[htpb]
		\centering
		\includegraphics[width=0.95\textwidth]{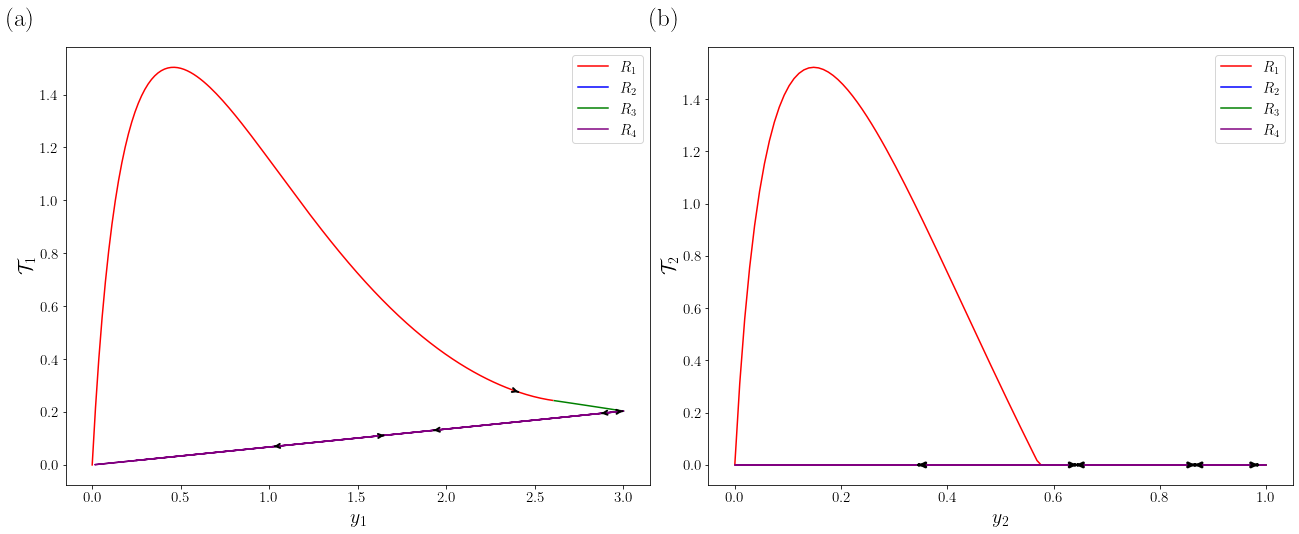}
		\caption{CASE 3: non potential-based model. a) Traction-displacement slip relation $\mc T_1 - y_1$, b) Traction-displacement slip relation $\mc T_2 - y_2$.} 
		\label{fig_All_different_N} 
	\end{figure}
	
	\begin{figure}[htpb]
		\centering
		\includegraphics[width=0.95\textwidth]{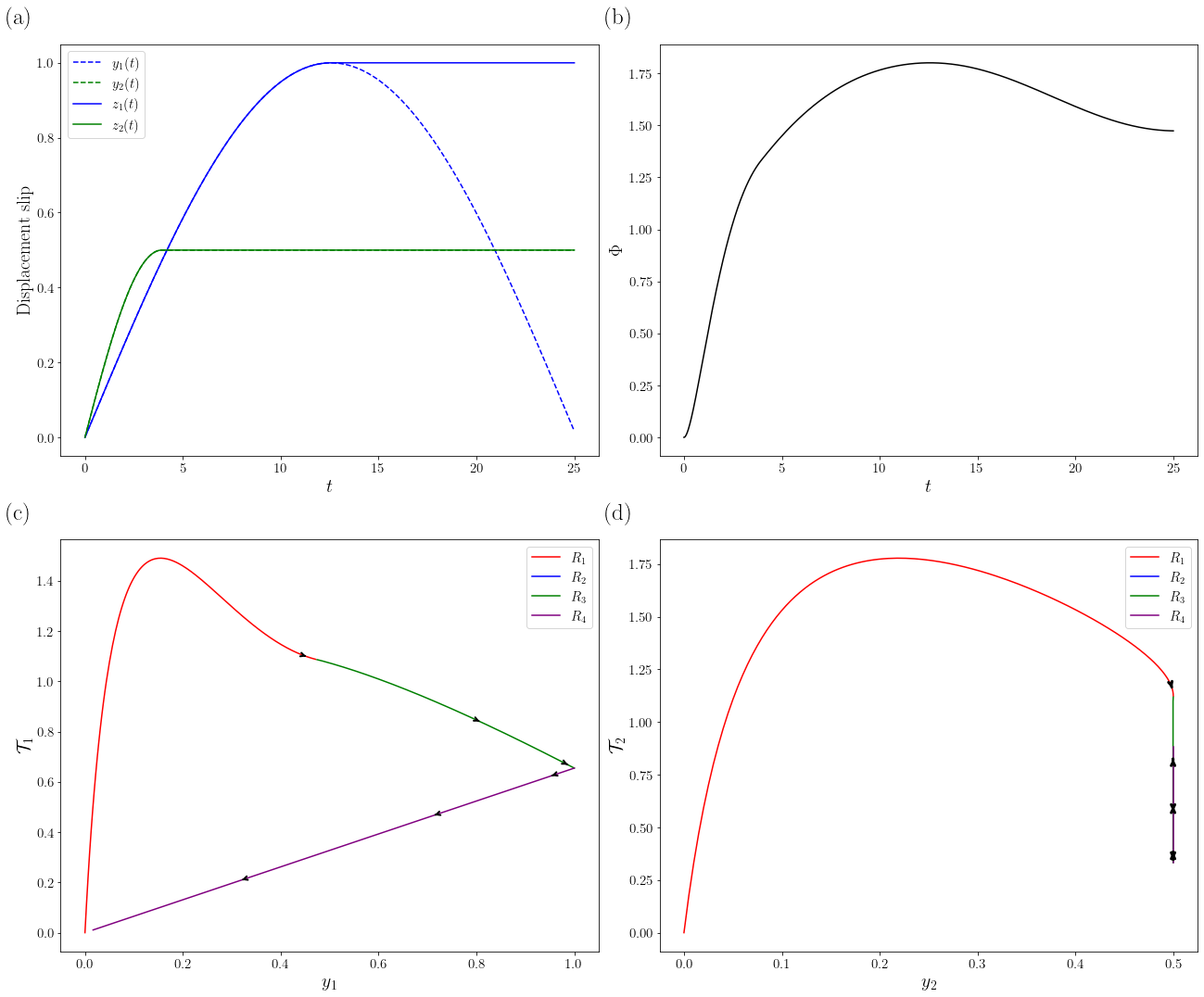}
		\caption{CASE 4: potential-based model. a) Evolution of displacement slip values $y_1, y_2$ and history variables $z_1, z_2$, b) Energy evolution, c) Traction-displacement slip relation $\mc T_1 - y_1$, d) Traction-displacement slip relation $\mc T_2 - y_2$.} 
		\label{fig_PPR_funziona} 
	\end{figure}

	\FloatBarrier
	
	\section{Proof of the existence results}\label{sec:existence}
	This last section is devoted to the proof of Theorems~\ref{thm:existence} and \ref{thm:existencenonpot}; we thus tacitly assume all their hypotheses, when needed. 
	\subsection{Energetic solutions}
	We first consider the potential-based model. The argument is similar to the one developed in \cite{BonCavFredRiva,Riv}, so we only sketch the various proofs stressing the differences which arise due to the anisotropy. We begin by performing a time-discretization algorithm. Let $\tau>0$ such that $T/\tau\in \N$, and for $k=0,\dots, T/\tau$ we define $t^k:=k\tau$. For $k=1,\dots,T/\tau$ we now consider the following recursive minimization scheme: given $(\bm u^{k-1},\gamma^{k-1})$, we set
	\begin{equation}\label{eq:discralg}
		\begin{cases}\displaystyle
			\bm u^k\in\argmin\limits_{\bm{v}\in H^1(\Omega;\R^d)^2}\mc F(t^k,\bm{v}, \gamma^{k-1}),\\
			\gamma^k:=\gamma^{k-1}\bm\vee\mf g(u_1^k-u_2^k),			
		\end{cases}
	\end{equation}
	where the initial conditions naturally are the given pair of initial data $(\bm u^0,\gamma^0)$.
	
	The existence of the minimum in \eqref{eq:discralg}$_1$ directly follows from the Direct Method of the Calculus of Variations in the weak topology of $H^1(\Omega;\R^d)^2$. Coercivity is ensured by Korn-Poincarè inequality, since the cohesive energy $\mc K$ is nonnegative; on the other hand, lower semicontinuity of the elastic energy $\mc E$ is standard, while it follows by Fatou's lemma for $\mc K$.
	
	A crucial tool in order to gain compactness for the discrete history variable $\gamma^k$ will be the following regularity result, whose proof can be found in \cite[Theorem~7.2]{GiacMart}.
	\begin{thm}\label{thm:regularity}
		Let the set $\Omega\subseteq \R^d$ be bilipschitz diffeomorphic to the open unit cube. Let $u\in H^1(\Omega;\R^d)$ be a weak solution of the equation
		\begin{equation*}
			-\div(\mathbb{C}e(u))=g,\qquad\text{in }\Omega,
		\end{equation*}
		where the tensor $\mathbb{C}$ satisfies \ref{hyp:C1}, \ref{hyp:C3} and \eqref{eq:LegendreHadamard}.
		
		If $g\in L^{\frac{dp}{d+p}}(\Omega;\R^d)$ for some $p>2$, then $\nabla u\in L^p_{\rm loc}(\Omega;\R^{d\times d})$ and for all open set $\Omega'\subset\subset\Omega$ there holds
		\begin{equation*}
			\|\nabla u\|_{L^p(\Omega')}\le C'\Big(\| g\|_{L^{\frac{dp}{d+p}}(\Omega)}+\|\nabla u\|_{L^2(\Omega)}\Big),
		\end{equation*}
		where the constant $C'>0$ depends only on $p,d,c,\omega$ and $\mathrm{dist}(\Omega',\Omega)$.
	\end{thm}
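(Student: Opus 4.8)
The statement is the classical interior $L^p$-gradient estimate for linear elliptic systems with uniformly continuous, Legendre--Hadamard coefficients, and I would prove it by the frozen-coefficient (perturbation) method combined with a Calder\'on--Zygmund iteration. Throughout, $(u)_{B}$ denotes the average of $u$ over a ball $B$, and I reduce matters by a covering argument to an a priori estimate on a fixed ball $B_{2R}\subset\subset\Omega$.

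\textbf{Step 1 (constant-coefficient regularity).} The building block is the theory for constant tensors: if $\C_0$ satisfies \eqref{eq:LegendreHadamard} and $v\in H^1(B_r;\R^d)$ solves $-\div(\C_0 e(v))=0$, then by the Agmon--Douglis--Nirenberg regularity for Legendre--Hadamard systems $v$ is smooth in $B_r$, with the Campanato-type decay
\[
\frac{1}{|B_\rho|}\int_{B_\rho}\big|\nabla v-(\nabla v)_{B_\rho}\big|^2\,\d x\le C\Big(\tfrac{\rho}{r}\Big)^{2}\frac{1}{|B_r|}\int_{B_r}|\nabla v|^2\,\d x ,\qquad 0<\rho\le r,
\]
and the interior bound $\essup_{B_{r/2}}|\nabla v|^2\le C\,|B_r|^{-1}\int_{B_r}|\nabla v|^2\,\d x$; in particular the $L^q$-average of $\nabla v$ on $B_{\rho}$ is controlled by the $(q/2)$-power of its $L^2$-average, for every $q\ge 2$.

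\textbf{Step 2 (Caccioppoli and comparison).} Testing the equation with a cut-off of $u-(u)_{B_{2r}}$ and using G\r{a}rding's inequality — which follows from \eqref{eq:LegendreHadamard}, the continuity \ref{hyp:C1} and Korn's inequality after absorbing a lower-order term, provided $2r$ is small enough that the modulus of continuity makes that term absorbable — yields a Caccioppoli estimate for $u$. Next, on $B_{2r}(x_0)\subset\subset\Omega$ let $v$ solve $-\div(\C(x_0)e(v))=0$ in $B_{2r}$ with $v-u\in H^1_0(B_{2r};\R^d)$, and set $w:=u-v$. Then
\[
-\div\big(\C(x_0)e(w)\big)=g+\div\big((\C(x_0)-\C(\cdot))e(u)\big),\qquad w\in H^1_0(B_{2r};\R^d),
\]
and the energy estimate (again via G\r{a}rding on the small ball) gives, in averaged form,
\[
\Big(\tfrac{1}{|B_{2r}|}\!\int_{B_{2r}}\!|\nabla w|^2\Big)^{1/2}\le C\,\omega(2r)\Big(\tfrac{1}{|B_{2r}|}\!\int_{B_{2r}}\!|\nabla u|^2\Big)^{1/2}+C\Big(\tfrac{1}{|B_{2r}|}\!\int_{B_{2r}}\!h^2\Big)^{1/2},
\]
where $h:=I_1|g|$ is the first-order Riesz potential of $|g|$ (the natural scale-invariant data function, since the solution of $-\div(\C(x_0)e(w))=g$ on a ball is pointwise dominated by $I_1|g|$).

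\textbf{Step 3 ($L^p$ bootstrap).} Steps 1--2 say precisely that at every small scale $\nabla u$ is, in the $L^2$ sense, $\delta(r)$-close to a function whose higher $L^q$-averages are controlled by its $L^2$-average, with $\delta(r)\lesssim \omega(2r)\to 0$ plus the $h$-contribution. This is the hypothesis of the Calder\'on--Zygmund / Caffarelli--Peral argument: a good-$\lambda$ inequality comparing the level sets of the maximal function $M(|\nabla u|^2)$ with those of $M(h^2)$ yields $M(|\nabla u|^2)\in L^{p/2}_{\rm loc}$, hence $\nabla u\in L^p_{\rm loc}(\Omega;\R^{d\times d})$, with
\[
\|\nabla u\|_{L^p(\Omega')}\le C'\big(\|h\|_{L^p(\Omega)}+\|\nabla u\|_{L^2(\Omega)}\big),
\]
and finally $\|h\|_{L^p}=\|I_1|g|\|_{L^p}\le C\|g\|_{L^{\frac{dp}{d+p}}(\Omega)}$ by Hardy--Littlewood--Sobolev (note $\tfrac{1}{dp/(d+p)}=\tfrac1p+\tfrac1d$), which is the asserted estimate; the constant depends only on $p,d$, the ellipticity constant $c$, the modulus $\omega$, and $\mathrm{dist}(\Omega',\partial\Omega)$.

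\textbf{Main obstacle.} The delicate points are two. First, the theorem assumes \emph{only} the Legendre--Hadamard condition \eqref{eq:LegendreHadamard} (not the pointwise coercivity \ref{hyp:C5}); consequently even the Caccioppoli and energy estimates are not automatic but require G\r{a}rding's inequality via Korn, which forces one to restrict to balls small enough for the $\omega$-dependent lower-order term to be absorbed, so the geometric constants and the smallness radius must be tracked carefully and the final constant shown to depend on $\omega$ (not on a uniform lower bound of $\C$). Second, the constant-coefficient $L^\infty$-gradient bound and Campanato decay in Step~1 for Legendre--Hadamard (rather than merely strongly monotone) systems rest on the full ADN regularity theory, and a single application of Gehring's lemma would only yield integrability slightly above $2$, so the complete Calder\'on--Zygmund machinery is genuinely needed to reach arbitrary $p$. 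Once these inputs are in place the argument is standard, which is why in the paper we simply invoke \cite[Theorem~7.2]{GiacMart}.
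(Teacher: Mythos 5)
The paper does not prove this theorem at all: it is imported verbatim from \cite[Theorem~7.2]{GiacMart}, which is why nothing else appears in the text. Your sketch is a correct outline of the standard argument (frozen coefficients, constant-coefficient Campanato decay, Caffarelli--Peral/Calder\'on--Zygmund iteration) that one would find behind such a reference, and you correctly identify the two points worth flagging: the Riesz-potential/Hardy--Littlewood--Sobolev step converting $g\in L^{dp/(d+p)}$ into the $L^p$ right-hand side, and the fact that Gehring alone only gives a small integrability gain whereas the good-$\lambda$ machinery is what reaches arbitrary $p>2$. One technical remark on Step~2: under \ref{hyp:C3} one has $\C_0 e(v):e(v)=\C_0\nabla v:\nabla v$, and for $v\in H^1_0$ the Legendre--Hadamard condition \eqref{eq:LegendreHadamard} already yields $\int\C_0\nabla v:\nabla v\,\d x\ge\tfrac{c}{2}\|\nabla v\|_{L^2}^2$ directly by Plancherel, so the coercivity on small balls (and hence Caccioppoli and the energy estimate for $w$) does not actually need Korn's inequality — Korn would only control $\nabla v$ by $e(v)$ and gives no lower bound on the quadratic form. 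This is a detour rather than an error, and the overall structure of your sketch matches the argument in \cite{GiacMart}.
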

	This first lemma provides boundedness of the discrete displacements $\bm u^k$, by exploiting their minimality property \eqref{eq:discralg}$_1$
	\begin{lemma}
		There exists a constant $C>0$ independent of $\tau$ such that
		\begin{equation}\label{eq:boundH1}
			\max\limits_{k=0,\dots,T/\tau}\| \bm u^k\|_{H^{1}(\Omega)^2}\le C.
		\end{equation}
	\end{lemma}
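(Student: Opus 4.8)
The goal is a uniform $H^1$-bound on the discrete displacements $\bm u^k$, and the natural starting point is to test the minimality property \eqref{eq:discralg}$_1$ against a competitor that adjusts $\bm u^{k-1}$ to match the new boundary datum $\ell(t^k)$. Concretely, I would compare $\bm u^k$ with $\bm v^k:=\bm u^{k-1}+(\ell(t^k)-\ell(t^{k-1}),\ell(t^k)-\ell(t^{k-1}))$, which lies in $(H^1_{D,\ell(t^k)}(\Omega;\R^d))^2$ and is thus admissible. Since $\mc F(t^k,\bm u^k,\gamma^{k-1})\le \mc F(t^k,\bm v^k,\gamma^{k-1})$ and both sides are finite, and since the cohesive term $\mc K$ is bounded (by \ref{hyp:phi2}, $0\le \mc K\le |\Omega|\,\mathrm{ess\,sup}\,\Phi=:M$), this yields
\begin{equation*}
	\mc E(\bm u^k)\le \mc E(\bm v^k)+M\le \mc E(\bm v^k)+M.
\end{equation*}
Expanding $\mc E(\bm v^k)$ using $e(v_i^k)=e(u_i^{k-1})+e(\ell(t^k)-\ell(t^{k-1}))$ and the coercivity/continuity of $\C_i$, together with $\ell\in W^{1,1}(0,T;H^1)$ (so that $\|e(\ell(t^k)-\ell(t^{k-1}))\|_{L^2}\le\int_{t^{k-1}}^{t^k}\|\dot\ell(s)\|_{H^1}\,\d s$), gives a bound of the form $\mc E(\bm u^k)\le \mc E(\bm u^{k-1})+C\big(1+\|e(\bm u^{k-1})\|_{L^2}\big)\int_{t^{k-1}}^{t^k}\|\dot\ell(s)\|_{H^1}\,\d s+M$.

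The difficulty with the naive telescoping is the presence of the additive $+M$ at each step, which would accumulate to $M\cdot T/\tau$ and blow up as $\tau\to 0$. To avoid this I would instead iterate the minimality more carefully: comparing $\bm u^j$ with the shifted competitor built from $\bm u^{j-1}$ and summing the resulting inequalities from $j=1$ to $k$, the bounded cohesive energies should telescope correctly because $\mc K(u_1^{j}-u_2^{j},\gamma^{j})\ge 0$ and one can write the balance as $\mc E(\bm u^k)+\mc K(\cdot,\gamma^k)\le \mc E(\bm u^0)+\mc K(\cdot,\gamma^0)+\sum_j(\text{work increments})$, using monotonicity of $\gamma\mapsto\mc K$ (from \ref{hyp:phi5}) and the update rule $\gamma^j=\gamma^{j-1}\bm\vee\mf g(u_1^j-u_2^j)$ to control the cohesive term at consecutive steps. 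This is exactly the discrete counterpart of \ref{EB}, and it shows the only genuinely accumulating quantity is the total work, which is $\le \big(\sup_k\|e(\bm u^k)\|_{L^2}+C\big)\int_0^T\|\dot\ell\|_{H^1}\,\d s$.

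With that discrete energy inequality in hand, let $E_k:=\mc E(\bm u^k)$ and note $c\,\|e(u_i^k)\|_{L^2}^2\le 2E_k$ by \ref{hyp:C5}. Setting $A_k:=\max_{j\le k}\sqrt{E_j}$, the accumulated work is bounded by $C(A_k+1)\int_0^T\|\dot\ell\|_{H^1}\,\d s=:C(A_k+1)L$, so $E_k\le E_0+C(A_k+1)L$ for all $k$, hence $A_k^2\le E_0+C(A_k+1)L$; this quadratic inequality in $A_k$ gives $A_k\le C'$ with $C'$ depending only on $E_0$, $L=\|\dot\ell\|_{L^1(0,T;H^1)}$, and the ellipticity constants — crucially independent of $\tau$. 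Finally, Korn–Poincaré applied on $H^1_{D,\ell(t^k)}$ (with the $L^2$-norm of $\ell(t^k)$, itself bounded uniformly in $t$ since $\ell\in W^{1,1}(0,T;H^1)\hookrightarrow C^0([0,T];H^1)$) upgrades the uniform bound on $\|e(\bm u^k)\|_{L^2}$ to the full $H^1$-bound \eqref{eq:boundH1}. I expect the main obstacle to be precisely the bookkeeping that prevents the bounded cohesive energy from accumulating across the $T/\tau$ time steps: one must exploit the discrete stability/balance structure rather than a step-by-step estimate, and handle the coupling between the unknown bound $A_k$ and the work term via the elementary quadratic-inequality absorption argument above.
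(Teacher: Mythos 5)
Your proposal eventually lands on a correct bound, but it is far more elaborate than needed, and the complications stem from a suboptimal choice of competitor. You compare $\bm u^k$ with the time-shifted state $\bm v^k=\bm u^{k-1}+(\ell(t^k)-\ell(t^{k-1}),\ell(t^k)-\ell(t^{k-1}))$, which forces you into a telescoping/Gr\"onwall machine (essentially re-deriving the discrete energy inequality, which the paper states and proves separately, plus a quadratic-absorption step) to avoid the very accumulation of the additive constant $M$ that you correctly flag.

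The paper instead takes the much simpler competitor $\bm\ell(t^k):=(\ell(t^k),\ell(t^k))$, i.e.\ both layers equal to the boundary datum. This is admissible in $(H^1_{D,\ell(t^k)}(\Omega;\R^d))^2$, and crucially its slip vanishes: $\ell(t^k)-\ell(t^k)=0_d$, so by \ref{hyp:g1} the cohesive term at the competitor is $\mc K(0_d,\gamma^{k-1})=\int_\Omega\Phi(x,0_m,\gamma^{k-1})\,\d x$, which is uniformly bounded by \ref{hyp:phi2} regardless of $k$ or $\gamma^{k-1}$. Combined with coercivity \ref{hyp:C5} on the left-hand side, this yields at one stroke
\[
\sum_{i=1}^2\frac{c_i}{2}\|e(u_i^k)\|_{L^2(\Omega)}^2\le\mc F(t^k,\bm\ell(t^k),\gamma^{k-1})\le C\bigl(\max_{t\in[0,T]}\|\ell(t)\|^2_{H^1(\Omega)}+1\bigr),
\]
and Korn--Poincar\'e finishes the argument, with no iteration and no danger of $M\cdot T/\tau$ blowing up. The lesson: since each $\bm u^k$ is a global minimizer for the current $\gamma^{k-1}$, any single \emph{fixed-energy} admissible competitor at time $t^k$ (rather than one built recursively from $\bm u^{k-1}$) gives a bound that is automatically uniform in $k$ and $\tau$. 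Your recursive competitor discards this structural advantage and makes you work for a fact that is essentially free.
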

	\begin{proof}
		For $k=0$ there is nothing to prove. So we fix $k\ge 1$ and we pick $\bm\ell(t^k):=(\ell(t^k),\ell(t^k))$ as a competitor for $\bm u^k$ in \eqref{eq:discralg}$_1$. By employing \ref{hyp:C1}, \ref{hyp:C5}, \ref{hyp:g1} and \ref{hyp:phi2} we estimate
		\begin{align*}
			\sum_{i=1}^2 \frac{c_i}{2}\| e(u_i^k)\|^2_{L^2(\Omega)}&\le \mc F(t^k,\bm u^k,\gamma^{k-1})\le \mc F(t^k,\bm \ell(t^k),\gamma^{k-1})=\mc E(\bm \ell(t^k))+\mc K(0_d,\gamma^{k-1})\\
			&\le C\| \ell(t^k)\|^2_{H^{1}(\Omega)}+\int_{\Omega}\Phi(x,0_m,\gamma^{k-1})\d x\le C(\max\limits_{t\in[0,T]}\| \ell(t)\|^2_{H^{1}(\Omega)}+1).
		\end{align*}
		We now conclude by means of \eqref{eq:externalloading} and Korn-Poincarè inequality. 
	\end{proof}
	By somehow computing the Euler-Lagrange equations of $\mc F(t^k,\cdot,\gamma^{k-1})$, see also \eqref{eq:EL}, we deduce the following uniform estimate.
	\begin{lemma}
		There exists a constant $C>0$ independent of $\tau$ such that for $i=1,2$ and for any $\varphi\in H^1_0(\Omega;\R^d)$ there holds
		\begin{equation}\label{eq:boundLinfty}
			\max\limits_{k=0,\dots,T/\tau}|\langle\div\C_i e(u^k_i),\varphi\rangle_{H^1_0(\Omega;\R^d)}|\le C\|\varphi\|_{L^1(\Omega)}.
		\end{equation}
	\end{lemma}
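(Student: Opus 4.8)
Estimate \eqref{eq:boundLinfty} is a one-sided Euler--Lagrange inequality for the minimizer $\bm u^k$ of \eqref{eq:discralg}$_1$, so the plan is to test the functional $\mc F(t^k,\cdot,\gamma^{k-1})$ against perturbations of $\bm u^k$ supported in a single layer. Fix $k\ge 1$, $i\in\{1,2\}$ and $\varphi\in H^1_0(\Omega;\R^d)$; for $s\in\R$ I would take as competitor the pair obtained from $\bm u^k$ by adding $s\varphi$ to its $i$-th component only. Since $\varphi$ vanishes on $\partial_D\Omega$, this competitor still belongs to $(H^1_{D,\ell(t^k)}(\Omega;\R^d))^2$, hence is admissible, and minimality applies for every $s$. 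Expanding the quadratic elastic energy (using the symmetry \ref{hyp:C4} for the cross terms) and noting that the perturbation turns the slip $\delta^k:=u_1^k-u_2^k$ into $\delta^k\pm s\varphi$ --- the sign depending on $i$, but immaterial for what follows --- minimality rewrites, for all $s\in\R$, as
\[
0\le s\int_\Omega\C_i e(u_i^k):e(\varphi)\d x+\frac{s^2}{2}\int_\Omega\C_i e(\varphi):e(\varphi)\d x+\Big(\mc K(\delta^k\pm s\varphi,\gamma^{k-1})-\mc K(\delta^k,\gamma^{k-1})\Big).
\]

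\textbf{Controlling the cohesive increment.} The second ingredient is a Lipschitz bound on the cohesive term in which \emph{no} differentiation of $\Phi$ or $\mf g$ is needed. By \ref{hyp:phi3} the map $y\mapsto\Phi(x,y,z)$ is Lipschitz with a constant $L_\Phi$ independent of $x$ and of $z$, and by \ref{hyp:g2} the map $\mf g$ is Lipschitz with some constant $L_{\mf g}$; composing pointwise and integrating over $\Omega$ one obtains
\[
\big|\mc K(\delta^k\pm s\varphi,\gamma^{k-1})-\mc K(\delta^k,\gamma^{k-1})\big|\le L_\Phi L_{\mf g}\,|s|\,\|\varphi\|_{L^1(\Omega)}.
\]
This is the step I expect to be the crux of the proof: it converts the non-smooth cohesive contribution into a linear-in-$|s|$ error controlled by the $L^1$ norm of $\varphi$, which is exactly the form appearing on the right-hand side of \eqref{eq:boundLinfty}.

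\textbf{Conclusion.} Inserting this bound into the displayed inequality, dividing by $s>0$ and letting $s\to 0^+$ (the elastic quadratic term being nonnegative by \ref{hyp:C5} and vanishing in the limit) yields $-\int_\Omega\C_i e(u_i^k):e(\varphi)\d x\le L_\Phi L_{\mf g}\|\varphi\|_{L^1(\Omega)}$; repeating with $s<0$ and letting $s\to 0^-$ gives the reverse inequality, so $\big|\int_\Omega\C_i e(u_i^k):e(\varphi)\d x\big|\le L_\Phi L_{\mf g}\|\varphi\|_{L^1(\Omega)}$. Since $\C_i e(u_i^k)\in L^2(\Omega;\R^{d\times d}_{\rm sym})$ by \ref{hyp:C2}, the left-hand side equals $|\langle\div\C_i e(u_i^k),\varphi\rangle_{H^1_0(\Omega;\R^d)}|$ by the very definition of the distributional divergence; as $L_\Phi$ and $L_{\mf g}$ depend on neither $\tau$ nor $k$, setting $C:=L_\Phi L_{\mf g}$ and taking the maximum over $k$ gives \eqref{eq:boundLinfty}. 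The only genuine subtlety is the non-differentiability of $\Phi\circ\mf g$, which would force a true Euler--Lagrange formulation to involve $\nabla_y\Phi$ and the subdifferential $D\mf g$ as in \eqref{eq:EL}; the one-sided testing above sidesteps this entirely, retaining only the inequality coming from minimality together with the sign of the elastic quadratic term.
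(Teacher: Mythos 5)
Your proposal is correct and follows essentially the same route as the paper: test the minimality of $\bm u^k$ with a competitor of the form $\bm u^k + s\varphi$ (you perturb one layer directly; the paper perturbs both with a general $\bm\varphi$ and specializes to $\varphi_1=0_d$ or $\varphi_2=0_d$ at the end), expand the quadratic elastic part, control the cohesive increment by the uniform Lipschitz constants of $\Phi$ in $y$ (\ref{hyp:phi3}) and of $\mf g$ (\ref{hyp:g2}), then divide by $s$ and pass to the limit. You correctly identify the Lipschitz control $\lvert\mc K(\delta^k\pm s\varphi,\gamma^{k-1})-\mc K(\delta^k,\gamma^{k-1})\rvert\le L_\Phi L_{\mf g}\lvert s\rvert\,\lVert\varphi\rVert_{L^1(\Omega)}$ as the crux, and your observation that one-sided testing avoids having to differentiate $\Phi\circ\mf g$ is exactly the point. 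The only small gap is that you restrict to $k\ge 1$, whereas the lemma also covers $k=0$; that case is not a minimizer of \eqref{eq:discralg}$_1$ but is handled identically via the initial stability condition \eqref{eq:initialstability} (with the convention $\gamma^{-1}:=\gamma^0$), as the paper points out.
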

	\begin{proof}
		Fix $k=0,\dots,T/\tau$ and let $\bm\varphi\in H^1_0(\Omega;\R^d)^2$ and $h>0$. By taking $\bm u^k+h\bm\varphi$ as a competitor for $\bm u^k$ in \eqref{eq:discralg}$_1$ and \eqref{eq:initialstability} (we set $\gamma^{-1}:=\gamma^0$) we obtain
		\begin{align*}
			0&\le \liminf\limits_{h\to 0^+}\frac{\mc F(t^k,\bm u^k+h\bm\varphi,\gamma^{k-1})-\mc F(t^k,\bm u^k,\gamma^{k-1})}{h}\\
			&=\sum_{i=1}^2\int_\Omega \C_i e(u_i^k):e(\varphi_i)\d x+\liminf\limits_{h\to 0^+}\frac{\mc K(u_1^k-u_2^k+h(\varphi_1-\varphi_2),\gamma^{k-1})-\mc K(u_1^k-u_2^k,\gamma^{k-1})}{h}.
		\end{align*}
		Observing that by \ref{hyp:g2} and \ref{hyp:phi3} there holds
		\begin{equation*}
			|\mc K(u_1^k-u_2^k+h(\varphi_1-\varphi_2),\gamma^{k-1})-\mc K(u_1^k-u_2^k,\gamma^{k-1})|\le C h \|\varphi_1-\varphi_2\|_{L^1(\Omega)},
		\end{equation*}
		we thus deduce
		\begin{equation*}
			\sum_{i=1}^2\langle\div\C_i e(u^k_i),\varphi_i\rangle_{H^1_0(\Omega;\R^d)}\le  C\|\varphi_1-\varphi_2\|_{L^1(\Omega)}.
		\end{equation*}
		By choosing $\varphi_1=0_d$ or $\varphi_2=0_d$ we finally conclude.
	\end{proof}
	We now employ the above two lemmas, together with Theorem~\ref{thm:regularity}, in order to improve the previous uniform bounds.
	\begin{lemma}\label{lemma:b}
		For $k=0,\dots, T/\tau$ there holds
		\begin{subequations}
			\begin{align}
				\bm u^k\in W^{1,p}_{\rm loc}(\Omega;\R^d)^2\quad &\text{for all }p>2,\label{eq:uW1p}\\
				\gamma^k\in C^{0,\alpha}_{\rm loc}(\Omega;\R^m)\quad&\text{for all }\alpha\in (0,1),\label{eq:gammaHolder}
			\end{align}
		\end{subequations}
		and for every $\Omega'\subset\subset\Omega$ there exists a constant $C'>0$ independent of $\tau$ (possibly depending on $p$ and $\Omega'$) such that
		\begin{subequations}\label{eq:bounds}
			\begin{align}
				\max\limits_{k=0,\dots, T/\tau}\| \bm u^k\|_{W^{1,p}(\Omega')^2}&\le C',\label{eq:boundW1p}\\
				\max\limits_{k=0,\dots, T/\tau}\|  \gamma^k\|_{\rm{C^{0,\alpha}}(\overline\Omega')}&\le C'.\label{eq:boundHoldergamma}
			\end{align}
		\end{subequations}
	\end{lemma}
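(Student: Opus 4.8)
The plan is to feed the uniform estimates \eqref{eq:boundH1} and \eqref{eq:boundLinfty} into the interior regularity Theorem~\ref{thm:regularity}, thereby upgrading the Sobolev regularity of the discrete displacements, and then to convert this into H\"older regularity of $\gamma^k$ through Morrey's embedding and the recursive definition \eqref{eq:discralg}$_2$; the recurring point is to keep every constant independent of $\tau$, i.e.\ of the index $k\le T/\tau$. As a first step I would notice that \eqref{eq:boundLinfty}, valid on the dense subspace $C^\infty_c(\Omega;\R^d)\subset H^1_0(\Omega;\R^d)\subset L^1(\Omega;\R^d)$, says exactly that $-\div\C_i e(u^k_i)$ extends to a bounded functional on $L^1(\Omega;\R^d)$; by the duality $(L^1)^*=L^\infty$ it is represented by some $g^k_i\in L^\infty(\Omega;\R^d)$ with $\|g^k_i\|_{L^\infty(\Omega)}\le C$ uniformly in $k$ and $\tau$, and since $\Omega$ is bounded we also get $\|g^k_i\|_{L^{dp/(d+p)}(\Omega)}\le C|\Omega|^{(d+p)/(dp)}$ for every $p>2$.

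Next, fix $\Omega'\subset\subset\Omega$ and cover the compact set $\overline{\Omega'}$ by finitely many balls $B_j$ whose concentric doublings $2B_j$ are still contained in $\Omega$. Each such ball is bilipschitz diffeomorphic to the open unit cube, and $\C_i$ satisfies \ref{hyp:C1}, \ref{hyp:C3} and --- by \ref{hyp:C5} --- the Legendre-Hadamard condition \eqref{eq:LegendreHadamard}; hence Theorem~\ref{thm:regularity} applies to $u^k_i$ on each $2B_j$ and yields $\nabla u^k_i\in L^p(B_j)$ with
\[\|\nabla u^k_i\|_{L^p(B_j)}\le C_j\big(\|g^k_i\|_{L^{dp/(d+p)}(2B_j)}+\|\nabla u^k_i\|_{L^2(2B_j)}\big)\le C_j\big(\|g^k_i\|_{L^{dp/(d+p)}(\Omega)}+\|\nabla u^k_i\|_{L^2(\Omega)}\big).\]
Summing over the finitely many $j$ and inserting the uniform bounds of the previous step and of \eqref{eq:boundH1} gives $\max_k\|\nabla u^k_i\|_{L^p(\Omega')}\le C'$; since $u^k_i\in H^1(\Omega)\subset L^1(\Omega')$, a Poincar\'e-Wirtinger inequality on the balls $B_j$ promotes this to the full estimate \eqref{eq:boundW1p} (which in particular contains the qualitative statement \eqref{eq:uW1p}).

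For the history variable, unwinding \eqref{eq:discralg}$_2$ produces the pointwise identity
\[\gamma^k_l(x)=\gamma^0_l(x)\vee\max_{1\le j\le k}\mf g_l\big(u^j_1(x)-u^j_2(x)\big),\qquad l=1,\dots,m,\]
so it is enough to control each $\mf g_l(u^j_1-u^j_2)$ in $C^{0,\alpha}(\overline{\Omega'})$ uniformly in $j$ and $\tau$. Given $\alpha\in(0,1)$ I would choose $p>d/(1-\alpha)$ in the previous step and use Morrey's embedding $W^{1,p}(\Omega'')\hookrightarrow C^{0,1-d/p}(\overline{\Omega''})\hookrightarrow C^{0,\alpha}(\overline{\Omega'})$ on a smooth set $\Omega''$ with $\Omega'\subset\subset\Omega''\subset\subset\Omega$, together with \ref{hyp:g1}--\ref{hyp:g2} (so that $v\mapsto\mf g_l(v)$ maps $C^{0,\alpha}$ into itself with controlled norm, using $\mf g(0_d)=0_m$); this bounds $\|\mf g_l(u^j_1-u^j_2)\|_{C^{0,\alpha}(\overline{\Omega'})}$ by a constant independent of $j$ and $\tau$. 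Since moreover $\gamma^0\in C^{0,1}_{\rm loc}(\Omega;\R^m)\subset C^{0,\alpha}(\overline{\Omega'})$ by \eqref{eq:in1}, and since a finite maximum satisfies $\|\max_i f_i\|_{C^0(\overline{\Omega'})}\le\max_i\|f_i\|_{C^0(\overline{\Omega'})}$ and $[\max_i f_i]_{\alpha,\overline{\Omega'}}\le\max_i[f_i]_{\alpha,\overline{\Omega'}}$, the displayed identity yields the bound \eqref{eq:boundHoldergamma}, hence also \eqref{eq:gammaHolder}.

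The one genuinely delicate point is the uniformity in $\tau$ of the estimate for $\gamma^k$: the number of time steps $k$ diverges as $\tau\to0$, so an inductive argument controlling $\|\gamma^k\|_{C^{0,\alpha}}$ in terms of $\|\gamma^{k-1}\|_{C^{0,\alpha}}$ would accumulate constants and blow up. What saves the argument is precisely that $\gamma^k$ can be written as an \emph{explicit finite maximum} of $\gamma^0$ and of the $\mf g_l(u^j_1-u^j_2)$, and that the H\"older norm of a finite maximum is bounded by the \emph{maximum} --- not the sum --- of the individual norms, so that no accumulation over the $k$ steps occurs. A minor and standard technical care is the passage from the cube-shaped domain in the hypothesis of Theorem~\ref{thm:regularity} to a general $\Omega'\subset\subset\Omega$, which is handled by the finite covering with balls described above.
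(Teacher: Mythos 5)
Your argument is correct and follows the same route as the paper's proof: extract an $L^\infty$ representative of $-\div\C_i e(u^k_i)$ from \eqref{eq:boundLinfty}, cover $\Omega'$ by finitely many sets bilipschitz to a cube (the paper uses a cubic grid rather than doubled balls, but this is immaterial), apply Theorem~\ref{thm:regularity} with the uniform $H^1$ bound \eqref{eq:boundH1}, upgrade to $C^{0,\alpha}_{\rm loc}$ via Morrey, and control $\gamma^k$ through the explicit representation as a finite maximum. Your closing observation that the H\"older seminorm of a finite maximum is bounded by the maximum of the seminorms is precisely the estimate the paper carries out by hand in the two-case analysis, and it is indeed the step that keeps the bound uniform in $\tau$.
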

	\begin{proof}
		We fix $k=0,\dots, T/\tau$, $p>2$ and we first observe that by intersecting the set $\Omega$ with a sufficiently fine cubic grid we may write it as
		\begin{equation*}
			\Omega=\bigcup_{j=1}^N\Omega_j,\text{ where each $\Omega_j$ is bilipschitz diffeomorphic to the open unit cube in $\R^d$}.
		\end{equation*}
		Pay attention that we are not requiring the subsets to be disjoint. We start working in a single set $\Omega_j$ introduced above. For $i=1,2$, by \eqref{eq:boundLinfty} we infer that
		\begin{equation*}
			\div\C_i e(u^k_i)\in L^\infty(\Omega_j;\R^d),\quad\text{with}\quad \|\div\C_i e(u^k_i)\|_{L^\infty(\Omega_j)}\le C.
		\end{equation*}
		The regularity Theorem~\ref{thm:regularity} now yields $\nabla u^k_i\in L^q_{\rm loc}(\Omega_j;\R^{d\times d})$ for any $q>2$ with
		\begin{equation*}
			\| \nabla u^k_i\|_{L^q(\Omega'_j)}\le C'_j(\|\div\C_i e(u^k_i)\|_{L^\infty(\Omega_j)}+\|\nabla u^k_i\|_{L^2(\Omega_j)})\le C'_j,\quad\text{for any }\Omega'_j\subset\subset\Omega_j,
		\end{equation*}
		where we exploited \eqref{eq:boundH1} in the last inequality.
		
		By arguing as in \cite[Proposition~2.5]{Riv} we then deduce that $ u^k_i\in W^{1,p}_{\rm loc}(\Omega_j;\R^{d})$ with
		\begin{equation*}
			\|u^k_i\|_{W^{1,p}(\Omega'_j)}\le C'_j,\quad\text{for any }\Omega'_j\subset\subset\Omega_j.
		\end{equation*}
		This readily implies \eqref{eq:uW1p} and \eqref{eq:boundW1p}, indeed for any $\Omega'\subset\subset \Omega$ one can easily find sets $\Omega_j'\subset\subset \Omega_j$ such that $\Omega'=\bigcup_{j=1}^N\Omega'_j$.
		
		We now fix $k=0,\dots, T/\tau$ and $\alpha\in (0,1)$. If $k=0$ there is nothing to prove since $\gamma^0\in C^{0,1}_{\rm loc}(\Omega;\R^m)$. If $k\ge 1$ we consider $p>\frac{d}{1-\alpha}$, so that by Sobolev embedding one actually has $\bm u^n\in  C^{0,\alpha}_{\rm loc}(\Omega;\R^d)^2$ for any $n=1\dots, k$ with 
		\begin{equation*}
			\max\limits_{n=1,\dots,k}\|\bm u^n\|_{C^{0,\alpha}(\overline{\Omega'})^2}
			\le C'\max\limits_{n=1,\dots,k}\|\bm u^n\|_{W^{1,p}(\Omega'')^2}\le C',\quad\text{for any }\Omega'\subset\subset\Omega,
		\end{equation*}
		where $\Omega''$ is an open set with Lipschitz boundary such that $\Omega'\subset\subset\Omega''\subset\subset\Omega$.
		
		In particular, since $\mf g$ is continuous, for any $l=1,\dots, m$ one deduces
		\begin{equation*}
			\max\limits_{n=1,\dots,k}\|\mf g_l(u_1^n-u_2^n)\|_{C^{0}(\overline{\Omega'})}\le C'.
		\end{equation*}
		Moreover the discrete history variable $\gamma^k_l=\max\limits_{n=1,\dots,k}\mf g_l(u_1^n-u_2^n)\vee \gamma^0_l$ is continuous in $\Omega$ since it is a finite maximum of continuous functions, and for any $\Omega'\subset\subset\Omega$ there holds
		\begin{equation*}
			\|\gamma^k_l\|_{C^0(\overline{\Omega'})}=\max\limits_{n=1,\dots,k}\|\mf g_l(u_1^n-u_2^n)\|_{C^0(\overline{\Omega'})}\vee \|\gamma^0_l\|_{C^0(\overline{\Omega'})}\le C'.
		\end{equation*}
		
		In order to show the validity of \eqref{eq:gammaHolder} and \eqref{eq:boundHoldergamma} we just need to control the H\"older seminorms $[\gamma^k_l]_{\alpha,\overline{\Omega'}}$ for an arbitrary set $\Omega'\subset\subset\Omega$. Let us fix two different points $x,y\in \overline{\Omega'}$; then there are two possibilities: either $\gamma^k_l(x)=\gamma^0_l(x)$ or there exists $\bar n\in \{1,\dots, k\}$ such that $\gamma^k_l(x)=\mf g_l(u_1^{\bar n}(x)-u_2^{\bar n}(x))$. In the first case we can estimate
		\begin{equation*}
			\gamma^k_l(x)\le \gamma^0_l(y)+|\gamma^0_l(x)-\gamma^0_l(y)|\le \gamma^k_l(y)+C'|x-y|_d^\alpha.
		\end{equation*}
		In the second case, instead, by using \ref{hyp:g2} we have
		\begin{align*}
			\gamma^k_l(x)&\le \mf g_l(u_1^{\bar n}(y)-u_2^{\bar n}(y))+|\mf g_l(u_1^{\bar n}(x)-u_2^{\bar n}(x))-\mf g_l(u_1^{\bar n}(y)-u_2^{\bar n}(y))|\\
			&\le \gamma^k_l(y)+ C\sum_{i=1}^2|u_i^{\bar n}(x)-u_i^{\bar n}(y)|_d\le \gamma^k_l(y)+C'|x-y|^\alpha_d.
		\end{align*}
		By the arbitrariness of $x$ and $y$ we finally conclude.
	\end{proof}
	
	We now introduce the piecewise constant interpolants $(\bm u^\tau,\gamma^\tau)$ of the discrete displacements and history variable defined as
	\begin{equation}\label{eq:piecewise}
		\begin{cases}
			\bm u^\tau(t):=\bm u^k,& \gamma^\tau(t):=\gamma^k,\qquad\text{if }t\in[t^k,t^{k+1}),\\
			\bm u^\tau(T):=\bm u^{ T/\tau},& \gamma^\tau(T):=\gamma^{ T/\tau}.
		\end{cases}
	\end{equation}
	For conveniency, we also set $w^\tau$ as 
	\begin{equation*}
		\begin{cases}
			w^\tau(t):=w(t^k),\qquad\text{if }t\in[t^k,t^{k+1}),\\
			w^\tau(T):=w(T).
		\end{cases}
	\end{equation*}
	For a given $t\in [0,T]$ we finally define
	\begin{equation*}
		t^\tau:=\max\{t^k:\,t^k\le t\}.
	\end{equation*}
	We first show that such interpolants satisfy a discrete energy inequality.
	\begin{prop}
		For every $t\in [0,T]$ and $\tau>0$ the following discrete energy inequality holds true:
		\begin{equation}\label{eq:discrineq}
			\mc F(t^\tau,\bm u^\tau(t),\gamma^\tau(t))\le \mc F(0,\bm u^0,\gamma^0)+\mc W^\tau(t)+R^\tau,
		\end{equation}
		where $\displaystyle\mc W^\tau(t)=\int_{0}^t\int_{\Omega}\sum_{i=1}^2\mathbb{C}_ie(u^\tau_i(s)):e(\dot{\ell}(s))\d x\d s$, while $R^\tau\ge 0$ is an infinitesimal remainder.
	\end{prop}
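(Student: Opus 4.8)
The plan is to carry out the usual comparison-plus-telescoping estimate attached to the minimizing movement scheme \eqref{eq:discralg}, pushing all $O(\tau)$ errors into the remainder $R^\tau$. I would first establish a one-step inequality. Fixing $k\in\{1,\dots,T/\tau\}$ and writing $\bm\ell(s):=(\ell(s),\ell(s))$, the competitor $\bm v:=\bm u^{k-1}+\bm\ell(t^k)-\bm\ell(t^{k-1})$ belongs to $(H^1_{D,\ell(t^k)}(\Omega;\R^d))^2$ --- because $\bm u^{k-1}$ equals $\ell(t^{k-1})$ on $\partial_D\Omega$ --- and, crucially, its slip $v_1-v_2$ coincides with $u_1^{k-1}-u_2^{k-1}$, so the cohesive part of $\mc F$ is insensitive to this substitution. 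Testing the minimality \eqref{eq:discralg}$_1$ of $\bm u^k$ against $\bm v$ and expanding $\mc E(\bm v)$ by bilinearity and the symmetry \ref{hyp:C4}, the cross term produces exactly $\sum_i\int_\Omega\C_i e(u_i^{k-1}):e(\ell(t^k)-\ell(t^{k-1}))\d x$, while the leftover $\mc E(\bm\ell(t^k)-\bm\ell(t^{k-1}))=:\rho_k^\tau\ge0$ is quadratic in the increment of $\ell$.

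The second ingredient is the update of the history variable. Since $\gamma^k=\gamma^{k-1}\bm\vee\mf g(u_1^k-u_2^k)$, applying \ref{hyp:phi4} with $y=\mf g(u_1^k-u_2^k)$ and $z=\gamma^{k-1}$ gives $\Phi(x,\mf g(u_1^k-u_2^k),\gamma^{k-1})=\Phi(x,\mf g(u_1^k-u_2^k),\gamma^k)$ for a.e.\ $x\in\Omega$, hence $\mc F(t^k,\bm u^k,\gamma^{k-1})=\mc F(t^k,\bm u^k,\gamma^k)$. Writing $\ell(t^k)-\ell(t^{k-1})=\int_{t^{k-1}}^{t^k}\dot\ell(s)\d s$ (licit by \eqref{eq:externalloading}) and recalling from \eqref{eq:piecewise} that $\bm u^\tau\equiv\bm u^{k-1}$ on $[t^{k-1},t^k)$, the two ingredients combine into
\[\mc F(t^k,\bm u^k,\gamma^k)\le\mc F(t^{k-1},\bm u^{k-1},\gamma^{k-1})+\int_{t^{k-1}}^{t^k}\sum_{i=1}^2\int_\Omega\C_i e(u_i^\tau(s)):e(\dot\ell(s))\d x\d s+\rho_k^\tau.\]

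Next I would sum this over $k=1,\dots,j$, where $t^j=t^\tau$, telescoping the energies and reconstructing $\mc W^\tau(t^\tau)$ from the work integrals. The discrepancy between $\mc W^\tau(t^\tau)$ and $\mc W^\tau(t)$ is the integral over $[t^\tau,t]$ of a quantity that, by boundedness of $\C_i$ (cf.\ \ref{hyp:C1}) and the uniform bound \eqref{eq:boundH1} on $\bm u^\tau$, is controlled by $C\int_{t^\tau}^t\|\dot\ell(s)\|_{H^1(\Omega)}\d s$; the same bound covers the degenerate case $t^\tau=0$, where there is nothing to telescope. Setting $R^\tau:=\sum_{k=1}^{T/\tau}\rho_k^\tau+C\,\omega(\tau)$, where $\omega$ is the modulus of continuity of the nondecreasing function $\sigma(t):=\int_0^t\|\dot\ell(s)\|_{H^1(\Omega)}\d s$ and the full sum over $k$ dominates every partial sum, one arrives at \eqref{eq:discrineq}; $R^\tau$ is manifestly nonnegative and independent of $t$.

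The one point that is not purely mechanical --- and the main obstacle --- is proving that $R^\tau$ is infinitesimal, uniformly in $t$. Using \ref{hyp:C1} and \eqref{eq:externalloading}, each $\rho_k^\tau\le C\|\ell(t^k)-\ell(t^{k-1})\|_{H^1(\Omega)}^2\le C\bigl(\sigma(t^k)-\sigma(t^{k-1})\bigr)^2$, and $\sigma$ is absolutely continuous, hence uniformly continuous on $[0,T]$. Factoring out one maximal increment, $\sum_{k=1}^{T/\tau}\rho_k^\tau\le C\,\omega(\tau)\,\sigma(T)$; since $\omega(\tau)\to0$ as $\tau\to0^+$, both contributions to $R^\tau$ vanish, closing the argument. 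Everything else --- existence and admissibility of the competitor, the algebraic expansion of $\mc E$, and measurability --- is routine under \ref{hyp:C1}, \ref{hyp:C4}, \ref{hyp:g1}, \ref{hyp:phi4} and \eqref{eq:externalloading}.
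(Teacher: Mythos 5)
Your proof is correct and follows the same route the paper takes (the paper cites Proposition~3.1 of~\cite{Riv} for the standard minimizing-movements comparison $\bm v=\bm u^{k-1}+\bm\ell(t^k)-\bm\ell(t^{k-1})$ and telescoping, and singles out the same key identity $\mc K(u_1^k-u_2^k,\gamma^{k})=\mc K(u_1^k-u_2^k,\gamma^{k-1})$ coming from \ref{hyp:phi4}, which you correctly identify and use). Your spelled-out treatment of the remainder $R^\tau$ — bounding $\sum_k\rho_k^\tau$ via squared $H^1$-increments of $\ell$ and the modulus of continuity of $\sigma(t)=\int_0^t\|\dot\ell(s)\|_{H^1}\d s$, plus the $[t^\tau,t]$ tail of the work — is exactly the standard estimate that is being invoked by reference.
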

	\begin{proof}
		The proof follows arguing as in \cite[Proposition~3.1]{Riv} by observing that for any $k=1,\dots,T/\tau$ assumption \ref{hyp:phi4} yields
		\begin{equation}\label{eq:important}
			\mc K(u_1^k-u_2^k,\gamma^{k})=\mc K(u_1^k-u_2^k,\gamma^{k-1}\bm\vee\mf g(u_1^k-u_2^k))=\mc K(u_1^k-u_2^k,\gamma^{k-1}).
		\end{equation}
	\end{proof}
	In view of the bounds obtained in Lemma~\ref{lemma:b}, we now deduce the following compactness result.
	\begin{prop}\label{prop:convsubs}
		There exists a subsequence $\tau_j\searrow0$ and for all $t\in [0,T]$ there exist a further subsequence $\tau_j(t)$ (possibly depending on time), and functions $\bm u(t)\in (H^1_{D,\ell(t)}(\Omega;\R^d)\cap C^{0,\alpha}_{\rm loc}({\Omega};\R^d))^2$ and $\gamma(t)\in C^{0,\alpha}_{\rm loc}({\Omega};\R^m)$ for any $\alpha\in(0,1)$ such that for all $t\in [0,T]$ there hold:
		\begin{equation}\label{eq:convtau}
			\begin{aligned}
				&	\bm u^{\tau_j(t)}(t)\xrightharpoonup[j\to+\infty]{H^1(\Omega;\R^d)^2} \bm u(t),\qquad\text{and}\qquad	\bm u^{\tau_j(t)}(t)\xrightarrow[j\to+\infty]{} \bm u(t) \text{ locally uniformly in $\Omega$},\\
				& \gamma^{\tau_j}(t) \xrightarrow[j\to+\infty]{} \gamma(t) \text{ locally uniformly in $\Omega$}.
			\end{aligned}
		\end{equation}
		In particular one has $(\bm u(0),\gamma(0))=(\bm u^0,\gamma^0)$.\\
		Moreover for all $l=1,\dots,m$ the function $\gamma_l$ is nondecreasing in time, and
		\begin{equation}\label{eq:gammasup}
			\gamma_l(t,x)\ge \sup\limits_{s\in[0,t]}\mf g_l(u_1(s,x)-u_2(s,x)), \quad\text{for every $(t,x)\in [0,T]\times\Omega$.}
		\end{equation}
		Finally there holds $u\in B([0,T];H^1(\Omega;\R^d)^2)$ and for all $\Omega'\subset\subset\Omega$ and any $\alpha\in(0,1)$ there also hold $u\in B([0,T];C^{0,\alpha}(\overline{\Omega'};\R^d))^2)$ and $\gamma\in B([0,T];C^{0,\alpha}(\overline{\Omega'};\R^m))$.
	\end{prop}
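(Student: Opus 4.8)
The plan is to exploit separately the fact that the history variable $\gamma$ is monotone in time, while the displacement pair $\bm u$ is not, extracting compactness accordingly. For $\gamma$, I would first fix a countable dense set $D\subseteq[0,T]$ containing $0$ and $T$ and exhaust $\Omega$ by open sets $\Omega'_k\subset\subset\Omega$. The uniform bound \eqref{eq:boundHoldergamma}, the Arzelà--Ascoli theorem and a diagonal argument over $k$ give, for each $t\in D$, a locally uniformly convergent sub-sequence with limit in $C^{0,\alpha}_{\rm loc}(\Omega;\R^m)$ for every $\alpha\in(0,1)$; a further diagonal argument over $t\in D$ produces a single vanishing sequence $\tau_j$ and a map $D\ni t\mapsto\gamma(t)$ with $\gamma^{\tau_j}(t)\to\gamma(t)$ locally uniformly. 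Since $\gamma^k=\gamma^{k-1}\bm\vee\mf g(u_1^k-u_2^k)\ge\gamma^{k-1}$ componentwise, every interpolant $\gamma^\tau(\cdot,x)$ is nondecreasing in $t$, hence so is $t\mapsto\gamma(t,x)$ on $D$. One then extends $\gamma$ monotonically to all of $[0,T]$ and upgrades the convergence to hold at every $t$ by squeezing $\gamma^{\tau_j}(t,x)$ between its values at points of $D$ straddling $t$; at the (countably many) jump instants of the limiting monotone profiles an additional refinement of the subsequence is needed, which is the (generalized) Helly selection argument, carried out as in \cite{Riv}. Nonnegativity passes to the limit, measurability of $t\mapsto\gamma(t)$ follows since it is an everywhere pointwise limit of the measurable piecewise-constant interpolants, and lower semicontinuity of the $C^{0,\alpha}$-norm under locally uniform convergence turns \eqref{eq:boundHoldergamma} into $\gamma\in B([0,T];C^{0,\alpha}(\overline{\Omega'};\R^m))$.

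For $\bm u$ I would fix $t$ and, using the uniform $H^1$ bound \eqref{eq:boundH1} and reflexivity, extract from $\{\tau_j\}$ a further (now $t$-dependent) subsequence $\tau_j(t)$ along which $\bm u^{\tau_j(t)}(t)\rightharpoonup\bm u(t)$ weakly in $H^1(\Omega;\R^d)^2$; the local bound \eqref{eq:boundW1p}, the compact embedding $W^{1,p}(\Omega')\hookrightarrow\hookrightarrow C^{0,\alpha}(\overline{\Omega'})$ for $p>d/(1-\alpha)$, and again a diagonal argument over $\Omega'_k\uparrow\Omega$ upgrade this to locally uniform convergence, with $\bm u(t)\in C^{0,\alpha}_{\rm loc}(\Omega;\R^d)^2$ and the two limits coinciding by uniqueness of distributional limits. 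The Dirichlet datum passes to the limit since $\bm u^{\tau_j(t)}(t)$ agrees on $\partial_D\Omega$ with $\bm\ell$ evaluated at the corresponding grid point, which converges strongly in $H^1(\Omega;\R^d)^2$ to $\bm\ell(t)$ (because $\ell\in W^{1,1}(0,T;H^1(\Omega;\R^d))\subset C([0,T];H^1(\Omega;\R^d))$ by \eqref{eq:externalloading}), while $(H^1_{D,0_d}(\Omega;\R^d))^2$ is weakly closed; hence $\bm u(t)\in(H^1_{D,\ell(t)}(\Omega;\R^d))^2$. Passing \eqref{eq:boundH1} and \eqref{eq:boundW1p} to the liminf, and using lower semicontinuity of the $C^{0,\alpha}$-norm, yields the stated membership of $\bm u$ in $B([0,T];H^1(\Omega;\R^d)^2)$ and in $B([0,T];C^{0,\alpha}(\overline{\Omega'};\R^d)^2)$, the measurability in $t$ being handled as in \cite{Riv}.

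It remains to collect the residual assertions. At $t=0$ the interpolants are constant, $\bm u^\tau(0)=\bm u^0$ and $\gamma^\tau(0)=\gamma^0$, so $(\bm u(0),\gamma(0))=(\bm u^0,\gamma^0)$; monotonicity in time of each $\gamma_l$ has already been established above. For \eqref{eq:gammasup} I would fix $x\in\Omega$, $l\in\{1,\dots,m\}$ and $s\le t$: the recursion yields $\gamma^\tau(s,x)\ge\mf g(u_1^\tau(s,x)-u_2^\tau(s,x))$ componentwise for every $\tau$, and passing to the limit along $\tau_j(s)$ — using $\gamma^{\tau_j}(s)\to\gamma(s)$ and $\bm u^{\tau_j(s)}(s)\to\bm u(s)$ locally uniformly together with the continuity of $\mf g$ — gives $\gamma_l(s,x)\ge\mf g_l(u_1(s,x)-u_2(s,x))$; combining with $\gamma_l(t,x)\ge\gamma_l(s,x)$ and taking the supremum over $s\in[0,t]$ concludes.

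The principal difficulty is the first step: producing a single subsequence $\tau_j$, \emph{independent of $t$}, along which $\gamma^{\tau_j}(t)\to\gamma(t)$ locally uniformly for every $t\in[0,T]$, jump instants of the limiting monotone profiles included; this is exactly the point where the generalized Helly selection principle is required, rather than a plain Arzelà--Ascoli extraction. A secondary, more technical point is establishing the measurability (and not merely the boundedness) of $t\mapsto\bm u(t)$, for which one leans on the explicit construction along $\tau_j$ on the dense set $D$ together with separability of the target spaces, as in \cite{BonCavFredRiva,Riv}.
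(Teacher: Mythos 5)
Your argument is correct and follows exactly the strategy the paper delegates to \cite[Proposition~3.3]{Riv}: a Helly-type/Arzelà--Ascoli selection over a countable dense set of times for the monotone history variable (giving a universal subsequence), combined with a pointwise, $t$-dependent weak-$H^1$ plus $W^{1,p}_{\rm loc}\hookrightarrow\hookrightarrow C^{0,\alpha}_{\rm loc}$ extraction for the displacement pair, all powered by the uniform bounds \eqref{eq:boundH1} and \eqref{eq:bounds}. The one small imprecision — when establishing \eqref{eq:gammasup} the inequality $\gamma^\tau(s)\ge\mathfrak g(u_1^\tau(s)-u_2^\tau(s))$ at $s\in[0,t^1)$ follows from the first condition in \eqref{eq:initialstability} rather than from the recursion, which only covers $k\ge 1$ — is trivial, and the measurability caveat you flag is indeed the delicate point that the paper itself does not spell out, deferring to the same reference.
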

	\begin{proof}
		The compactness result \eqref{eq:convtau} can be proved exactly as in \cite[Proposition~3.3]{Riv} once we have at our disposal the uniform bounds \eqref{eq:bounds}. In the same way, inequality \eqref{eq:gammasup} follows arguing as \cite[Proposition~3.3]{Riv} since $\mf g_l$ is a continuous function. All the other properties are simple byproducts of the convergences \eqref{eq:convtau} and the uniform bounds \eqref{eq:boundH1} and \eqref{eq:bounds}.
	\end{proof}
	We finally conclude by showing that the just obtained limit functions are an energetic solution to the potential-based cohesive interface model.
	\begin{prop}
		For every $t\in [0,T]$ the limit pair $(\bm u,\gamma)$ obtained in Proposition~\ref{prop:convsubs} satisfies the global stability condition \ref{GS} and the energy balance \ref{EB} of Definition~\ref{def:genensol}.
	\end{prop}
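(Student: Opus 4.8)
The plan is to complete the minimizing–movements scheme, in the spirit of \cite{BonCavFredRiva,Riv}, by combining three ingredients: global stability \ref{GS} obtained as the limit of the discrete minimality \eqref{eq:discralg}$_1$, an upper energy estimate coming from the discrete energy inequality \eqref{eq:discrineq}, and a matching lower energy estimate extracted from \ref{GS} itself; the two estimates then give \ref{EB}. We start with \ref{GS}. Its first half, $\gamma_l(t)\ge \mf g_l(u_1(t)-u_2(t))$, is \eqref{eq:gammasup} read at $s=t$. For the minimality, fix $t$, work along the time-dependent subsequence $\tau_j(t)$ of Proposition~\ref{prop:convsubs}, and take a competitor $\bm v$; we may assume $\bm v\in (H^1_{D,\ell(t)}(\Omega;\R^d))^2$, otherwise $\mc F(t,\bm v,\gamma(t))=+\infty$. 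With $\bm\ell:=(\ell,\ell)$, the shifted map $\bm v^\tau:=\bm v+\bm\ell(t^\tau)-\bm\ell(t)$ is an admissible competitor at time $t^\tau$ whose slip $v_1^\tau-v_2^\tau$ still equals $v_1-v_2$. Then \eqref{eq:discralg}$_1$, the identity \eqref{eq:important} (which uses \ref{hyp:phi4}) applied to $\bm u^k$, and the monotonicity \ref{hyp:phi5} applied to $\bm v^\tau$ together with $\gamma^k\ge\gamma^{k-1}$ give the discrete stability $\mc F(t^\tau,\bm u^\tau(t),\gamma^\tau(t))\le \mc F(t^\tau,\bm v^\tau,\gamma^\tau(t))$. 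Letting $j\to\infty$: on the left one uses weak $H^1$ lower semicontinuity of $\mc E$ and dominated convergence for $\mc K$ — because $\Phi$ is bounded and continuous (\ref{hyp:phi2}), $\mf g$ is continuous, and $\bm u^{\tau_j(t)}(t)\to\bm u(t)$, $\gamma^{\tau_j(t)}(t)\to\gamma(t)$ locally uniformly, hence a.e.; on the right $\bm\ell(t^\tau)\to\bm\ell(t)$ in $H^1(\Omega;\R^d)$ (as $\ell\in C^0([0,T];H^1(\Omega;\R^d))$ by \eqref{eq:externalloading}) and dominated convergence give $\mc F(t^{\tau_j(t)},\bm v^{\tau_j(t)},\gamma^{\tau_j(t)}(t))\to\mc F(t,\bm v,\gamma(t))$. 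This proves \ref{GS}.

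For \ref{EB} we prove two inequalities. The upper one, $\mc F(t,\bm u(t),\gamma(t))\le \mc F(0,\bm u^0,\gamma^0)+\mc W(t)$, follows by letting $j\to\infty$ along $\tau_j(t)$ in \eqref{eq:discrineq}: the left-hand side is controlled by the lower semicontinuity just used, $R^\tau\to0$, and the discrete work $\mc W^\tau(t)$ converges to $\mc W(t)$. For the lower one we exploit \ref{GS}: given a partition $0=s_0<\dots<s_M=t$ we insert, at each step, the competitor $\bm u(s_i)+\bm\ell(s_{i-1})-\bm\ell(s_i)\in (H^1_{D,\ell(s_{i-1})}(\Omega;\R^d))^2$ — which has the same slip as $\bm u(s_i)$ — into the stability at time $s_{i-1}$; since $\gamma$ is nondecreasing in time, \ref{hyp:phi5} lets us replace $\gamma(s_{i-1})$ by $\gamma(s_i)$ in the cohesive energies at the cost of an inequality in the favourable direction. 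Telescoping and expanding the quadratic $\mc E$ (using the symmetry \ref{hyp:C4}) yields $\mc F(t,\bm u(t),\gamma(t))\ge \mc F(0,\bm u^0,\gamma^0)+\sum_{i=1}^M\sum_{k=1}^2\int_\Omega\C_ke(u_k(s_i)):e(\ell(s_i)-\ell(s_{i-1}))\d x-\sum_{i=1}^M\mc E(\bm\ell(s_i)-\bm\ell(s_{i-1}))$; refining the partition, the quadratic remainder vanishes because $\sum_i\|\ell(s_i)-\ell(s_{i-1})\|^2_{H^1(\Omega)}\le \max_i\|\ell(s_i)-\ell(s_{i-1})\|_{H^1(\Omega)}\,\|\dot\ell\|_{L^1(0,t;H^1(\Omega))}\to0$ by absolute continuity of $\ell$, while the first-order sum tends to $\mc W(t)$. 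Combining the two inequalities gives \ref{EB} for every $t\in[0,T]$, which completes the proof of Theorem~\ref{thm:existence}.

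The main technical obstacle is the convergence $\mc W^\tau(t)\to\mc W(t)$ of the discrete work (and, relatedly, the identification of the Riemann-sum limit with $\mc W(t)$ in the lower estimate): the uniform bound \eqref{eq:boundH1} only provides, via Banach--Alaoglu, a single time-independent weak-$\ast$ limit of $\bm u^\tau$ in $L^\infty(0,T;H^1(\Omega;\R^d)^2)$, which must be identified with the pointwise-in-time limit $\bm u$ of Proposition~\ref{prop:convsubs} and then tested against $\mathbf 1_{[0,t]}\,e(\dot\ell)$. This delicate passage is carried out exactly as in the corresponding step of \cite{Riv}, the anisotropy of $\mf g$ and $\Phi$ playing no role in it.
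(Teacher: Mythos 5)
Your proof is correct and takes essentially the same approach as the paper, which sketches the identical steps by referring to \cite{Riv} (Propositions~3.4 and 3.5) and \cite{BonCavFredRiva} (Proposition~3.10): discrete stability with the updated history variable obtained from \eqref{eq:discralg}$_1$, \eqref{eq:important} and \ref{hyp:phi5}, a limit passage by weak $H^1$ lower semicontinuity and dominated convergence for the cohesive term, a telescoping Riemann-sum argument derived from \ref{GS} for the lower energy estimate, and the discrete energy inequality \eqref{eq:discrineq} for the upper one. The Riemann-sum/work-convergence subtlety you flag at the end is exactly the technical point the paper delegates to those references, so nothing is missing.
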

	\begin{proof}
		The first condition in \ref{GS} is implied by \eqref{eq:gammasup}, while the global minimality property follows as in \cite[Proposition~3.5]{Riv} recalling assumptions \ref{hyp:phi4}, \ref{hyp:phi5} and property \eqref{eq:important}.
		
		As a consequence of \ref{GS}, arguing as in \cite[Proposition~3.10]{BonCavFredRiva} one can prove the lower energy inequality
		\begin{equation*}
			\mc F(t,\bm u(t),\gamma(t))\ge \mc F(0,\bm u^0,\gamma^0)+\mc W(t),
		\end{equation*}
		by exploiting assumption \ref{hyp:phi5}. The opposite inequality, which finally yields \ref{EB}, instead follows by sending $\tau_j\to 0$ in the discrete energy inequality \eqref{eq:discrineq}, see \cite[Proposition~3.4]{Riv} for more details.
	\end{proof}
	
	\subsection{Equilibrium solutions}
	We now focus on Theorem~\ref{thm:existencenonpot}. We discretize the time interval $[0,T]$ as in the previous section, but here we need to consider a different recursive scheme. Given a pair $(\bm u^{k-1}, \gamma^{k-1})$ we first select $\bm u^k\in (H^1_{D,\ell(t^k)}(\Omega;\R^d))^2$ as a solution to \eqref{eq:system} with history variable at the previous step $\gamma^{k-1},$ namely satisfying
	\begin{subequations}\label{eq:sist2}
		\begin{equation}\label{eq:weaksoldiscr}
			\sum_{i=1}^2\int_\Omega \mathbb C_ie(u_i^k):e(\varphi_i) \d x=-\int_\Omega\mathcal T(x,\mathfrak g(u_1^k-u_2^k),\gamma^{k-1})\cdot (\eta_1^k\varphi_1-\eta_2^k\varphi_2) \d x,
		\end{equation}
		for all $\bm\varphi \in (H^1_{D,0_d}(\Omega;\R^d))^2$ and for some 
		\begin{equation}
			\bm \eta^k\in (L^\infty(\Omega;\R^{m\times d}))^2 \text{ such that } \eta^k_i(x)\in D\mathfrak g(u_1^k(x)-u_2^k(x))\text{ for a.e. } x\in\Omega.
		\end{equation}
	\end{subequations} 
	Then we define $\gamma^k$ as in \eqref{eq:discralg}$_2$. 
	
	The existence of a solution $\bm u^k$ to \eqref{eq:sist2} is granted by the well-known Kakutani's fixed point theorem (or better, by its infinite-dimensional generalization \cite{Fan,Glick}).
	
	\begin{thm}\label{Kakutani}
		Let $S$ be a nonempty convex compact subset of a Hausdorff locally convex topological vector space, and consider a set-valued function $\mc R\colon S\to 2^S$ satisfying:
		\begin{itemize}
			\item $\mc R(s)$ is nonempty and convex for all $s\in S$;
			\item the graph of $\mc R$ is closed, i.e. if $s_n\to s$, $u_n\to u$, $s_n\in S$ and $u_n\in \mc R(s_n)$, then $u\in \mc R(s)$.
		\end{itemize}
		Then, there exists a fixed point $s\in \mc R(s)$.
	\end{thm}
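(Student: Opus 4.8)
This is the Fan--Glicksberg set-valued generalisation of Brouwer's and Kakutani's fixed point theorems, a classical result (which is precisely why we invoke \cite{Fan,Glick}); nonetheless, let me sketch the route I would follow. The plan is to reduce everything to Brouwer's fixed point theorem in two steps---first the \emph{finite-dimensional} Kakutani theorem, then an approximation of the compact convex set $S$ by finite-dimensional polytopes---and finally to pass to a limit along a net of shrinking neighbourhoods. For Step~1, assume $S$ is a compact convex subset of some $\R^n$. Fix the $k$-th barycentric subdivision of $S$; at each of its vertices $v$ choose an arbitrary $f_k(v)\in\mc R(v)$ and extend $f_k$ affinely on each subsimplex, obtaining a continuous self-map $f_k\colon S\to S$. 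Brouwer's theorem gives $x_k=f_k(x_k)$, lying in a subsimplex with vertices $v^k_0,\dots,v^k_n$, so that $x_k=\sum_j\lambda^k_j v^k_j=\sum_j\lambda^k_j f_k(v^k_j)$ with $\lambda^k_j\ge 0$ and $\sum_j\lambda^k_j=1$. Along a subsequence, $\lambda^k_j\to\lambda_j$ and $x_k\to x^\ast$; since the mesh of the subdivision vanishes, $v^k_j\to x^\ast$ for every $j$, and along a further subsequence $f_k(v^k_j)\to w_j$. As $(v^k_j,f_k(v^k_j))$ lies in the graph of $\mc R$ and converges to $(x^\ast,w_j)$, the closed-graph hypothesis forces $w_j\in\mc R(x^\ast)$; convexity of $\mc R(x^\ast)$ then yields $x^\ast=\sum_j\lambda_j w_j\in\mc R(x^\ast)$.

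For Step~2, fix a convex balanced neighbourhood $V$ of $0$ in the ambient space. By compactness choose $x_1,\dots,x_N\in S$ with $S\subseteq\bigcup_i(x_i+V)$, set $P:=\mathrm{conv}\{x_1,\dots,x_N\}\subseteq S$ (a finite-dimensional compact convex polytope) and let $\{p_i\}$ be a continuous partition of unity on $S$ subordinate to $\{x_i+V\}$. The map $\pi_V(x):=\sum_i p_i(x)x_i$ is continuous from $S$ into $P$ and satisfies $\pi_V(x)-x\in V$ for all $x\in S$. Define $\mc R_V\colon P\to 2^P$ by $\mc R_V(x):=\pi_V(\mc R(x))$; then $\mc R_V$ has nonempty convex values ($\pi_V$ affine, $\mc R$ convex-valued) and closed graph (on the compact set $P$ the map $\mc R|_P$ is upper semicontinuous with compact values and $\pi_V$ is continuous). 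Step~1 applies to $\mc R_V$ and produces $x_V\in P$ with $x_V\in\mc R_V(x_V)$, i.e. $x_V=\pi_V(y_V)$ for some $y_V\in\mc R(x_V)$, so that $x_V-y_V=\pi_V(y_V)-y_V\in V$. Now let $V$ run over the family of convex balanced neighbourhoods of $0$, directed by reverse inclusion: the net $(x_V)$ lies in the compact set $S$, hence a subnet converges to some $x^\ast\in S$; along it $y_V\in\mc R(x_V)$ with $x_V-y_V\in V\to 0$, so $y_V\to x^\ast$ as well, and the closed-graph hypothesis applied to $(x_V,y_V)\to(x^\ast,x^\ast)$ gives $x^\ast\in\mc R(x^\ast)$, as required.

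The main obstacle is Step~1, the finite-dimensional Kakutani theorem: beyond the barycentric-subdivision construction of the auxiliary continuous maps $f_k$, the delicate point is the simultaneous passage to the limit of the barycentric coordinates and of the images $f_k(v^k_j)$---it is essential that all vertices $v^k_j$ of the containing subsimplex collapse to the \emph{same} point $x^\ast$, so that the closed-graph hypothesis may be invoked at a single point and convexity of $\mc R(x^\ast)$ then closes the argument. Everything in Step~2---the projection $\pi_V$, the verification that $\mc R_V$ inherits the hypotheses, and the final net/closed-graph limit---is routine once Step~1 is in hand.
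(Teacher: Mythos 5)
The paper does not prove this theorem: it is the classical Kakutani--Fan--Glicksberg fixed-point theorem, stated and cited to \cite{Fan,Glick}, so there is no ``paper's own proof'' to compare against. Your sketch follows the standard two-step route (finite-dimensional Kakutani via Brouwer, then Schauder projection onto finite-dimensional sections, then a net limit), and Step~1 is essentially correct up to the usual technicality that barycentric subdivision is defined for simplices rather than arbitrary compact convex bodies (enclose $S$ in a large simplex $\Delta$ and replace $\mc R$ by $\mc R\circ r$ with $r\colon\Delta\to S$ the metric retraction, or triangulate $S$ directly).

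There is, however, a genuine gap in Step~2: you justify convexity of $\mc R_V(x):=\pi_V(\mc R(x))$ by asserting that $\pi_V$ is affine, and it is not. The Schauder projection $\pi_V(x)=\sum_i p_i(x)x_i$ is built from a partition of unity $\{p_i\}$ subordinate to $\{x_i+V\}$; these $p_i$ are bump-like continuous functions, not affine, so $\pi_V$ does not carry convex sets to convex sets and $\mc R_V$ as defined may fail the convexity hypothesis of Step~1. The standard repair is to set $\mc R_V(x):=\mathrm{conv}\bigl(\pi_V(\mc R(x))\bigr)$. Since $\mc R$ has closed graph and $S$ is compact, each $\mc R(x)$ is compact, hence $\pi_V(\mc R(x))$ is compact, hence its convex hull in the finite-dimensional set $P$ is compact and convex; the closed-graph (equivalently, upper semicontinuity) property passes to $\mathrm{conv}\circ\pi_V\circ\mc R$ by a Carath\'eodory decomposition with a fixed number of summands and a subsequence extraction, or by invoking that the convex hull of a compact-valued u.s.c.\ correspondence into $\R^n$ is again compact-valued u.s.c. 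With that correction the net limit goes through; one should also observe that for a non-metrizable locally convex space the closed-graph hypothesis must be read for nets, not merely sequences -- the sequential form in the statement suffices for the paper's application because bounded subsets of $H^1$ with the weak topology are metrizable.
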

	\begin{cor}\label{cor:fixedpoint}
		Under the assumptions of Theorem~\ref{thm:existencenonpot}, system \eqref{eq:sist2} admits a solution.
	\end{cor}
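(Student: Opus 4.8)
The plan is to apply Kakutani's theorem (Theorem~\ref{Kakutani}) to a set-valued map on a compact convex subset of $L^2(\Omega;\R^d)^2$. Fix the step $k$ and abbreviate $\ell^*:=\ell(t^k)$ and $\gamma^*:=\gamma^{k-1}$; note that \eqref{eq:externalloading} gives $L:=\sup_{t\in[0,T]}\|\ell(t)\|_{H^1(\Omega)}<+\infty$. First I would record the a priori estimate fixing the size of the domain: choosing $\bm\varphi:=\bm u-(\ell^*,\ell^*)$ as test function in \eqref{eq:weaksoldiscr} and combining the coercivity \ref{hyp:C5}, the boundedness of $\mc T$ (\ref{hyp:T1}), the uniform bound on $D\mathfrak g$ coming from the Lipschitz continuity of $\mathfrak g$ (\ref{hyp:g2}), and the Korn and Poincar\'e inequalities, one obtains a constant $M=M(L)>0$, independent of $k$, bounding the $H^1(\Omega)^2$-norm of every such solution $\bm u$. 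Accordingly, set
\[
S:=\{\bm w\in (H^1_{D,\ell^*}(\Omega;\R^d))^2:\ \|\bm w\|_{H^1(\Omega)^2}\le M\},
\]
endowed with the strong topology of $L^2(\Omega;\R^d)^2$. Then $S$ is nonempty (it contains $(\ell^*,\ell^*)$), convex, and compact in $L^2$: it is bounded in $H^1$, hence any sequence in $S$ has a subsequence converging weakly in $H^1$ and, by Rellich's theorem, strongly in $L^2$, the limit still lying in $S$ by weak closedness of the affine (Dirichlet) constraint and weak lower semicontinuity of the $H^1$-norm.

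Next I would define the map $\mc R$. For $\bm w\in S$, let $\Sigma(\bm w)$ be the set of $\bm\eta=(\eta_1,\eta_2)\in (L^\infty(\Omega;\R^{m\times d}))^2$ such that $\eta_i(x)\in D\mathfrak g(w_1(x)-w_2(x))$ for a.e.\ $x\in\Omega$ and $i=1,2$. Since $\mathfrak g$ is Lipschitz and convex (\ref{hyp:g2}, \ref{hyp:g3}), the multifunction $x\mapsto D\mathfrak g(w_1(x)-w_2(x))$ is measurable with nonempty, convex, uniformly bounded values, so it possesses a measurable (hence $L^\infty$) selection: $\Sigma(\bm w)\neq\emptyset$, and it is evidently convex. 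For each $\bm\eta\in\Sigma(\bm w)$, the bilinear form $(\bm u,\bm\varphi)\mapsto\sum_{i=1}^2\int_\Omega \C_i e(u_i):e(\varphi_i)\d x$ is continuous and, by \ref{hyp:C5} together with Korn's inequality, coercive on $(H^1_{D,0_d}(\Omega;\R^d))^2$, while $\bm\varphi\mapsto-\int_\Omega \mc T(x,\mathfrak g(w_1-w_2),\gamma^*)\cdot(\eta_1\varphi_1-\eta_2\varphi_2)\d x$ is a bounded linear functional there; Lax--Milgram thus provides a unique $\bm u_{\bm\eta}\in (H^1_{D,\ell^*}(\Omega;\R^d))^2$ solving \eqref{eq:weaksoldiscr}. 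Set $\mc R(\bm w):=\{\bm u_{\bm\eta}:\ \bm\eta\in\Sigma(\bm w)\}$. The a priori bound gives $\mc R(\bm w)\subseteq S$, so $\mc R\colon S\to 2^S$; it has nonempty and, since $\bm\eta\mapsto\bm u_{\bm\eta}$ is affine while $\Sigma(\bm w)$ is convex, convex values.

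The heart of the argument is the closed-graph property. Suppose $\bm w_n\to\bm w$ and $\bm u_n\to\bm u$ in $L^2(\Omega;\R^d)^2$ with $\bm u_n=\bm u_{\bm\eta_n}\in\mc R(\bm w_n)$ for some $\bm\eta_n\in\Sigma(\bm w_n)$. Since $\bm u_n,\bm w_n\in S$ are $H^1$-bounded and the $\bm\eta_n$ are $L^\infty$-bounded, along a subsequence $\bm u_n\rightharpoonup\bm u$ weakly in $H^1(\Omega;\R^d)^2$ and $\bm\eta_n\rightharpoonup\bm\eta$ weakly in $L^2(\Omega;\R^{m\times d})^2$; applying the closure property \ref{hyp:g4} with $\delta_n:=w_{1,n}-w_{2,n}\to w_1-w_2$ in $L^2$ yields $\bm\eta\in\Sigma(\bm w)$. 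Now I would pass to the limit in \eqref{eq:weaksoldiscr}: the left-hand side converges by the weak $H^1$-convergence of $\bm u_n$, and for the right-hand side one uses that $\mc T(\cdot,\mathfrak g(w_{1,n}-w_{2,n}),\gamma^*)\to\mc T(\cdot,\mathfrak g(w_1-w_2),\gamma^*)$ strongly in $L^2$ — the convergence holding a.e.\ along a subsequence by continuity of $\mathfrak g$ and of $\mc T$ in its second entry (\ref{hyp:T1}) and being dominated via the uniform bound in \ref{hyp:T1} — so that the product of this strong limit with the fixed test functions and the weakly convergent $\bm\eta_n$ passes to the limit in the integral. Hence $\bm u$ solves \eqref{eq:weaksoldiscr} with multiplier $\bm\eta\in\Sigma(\bm w)$, i.e.\ $\bm u\in\mc R(\bm w)$. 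Theorem~\ref{Kakutani} then furnishes $\bm u^k\in\mc R(\bm u^k)$, which by construction is a solution of \eqref{eq:sist2}; setting $\gamma^k$ as in \eqref{eq:discralg} completes the step. I expect this last limit passage to be the main obstacle: only the $L^\infty$ bound on $\bm\eta_n$ is available, so one must pass to the limit weakly in the multipliers, and it is precisely \ref{hyp:g4} — together with the strong $L^2$-convergence of the tensions — that keeps the nonlinear coupling through $\mathfrak g$ and $\mc T$ alive in the limit.
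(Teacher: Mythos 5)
Your proposal is correct and follows essentially the same route as the paper: a set-valued fixed-point map built from the linear (Lax--Milgram) problem with frozen $\mathfrak g(w_1-w_2)$ and subgradient $\bm\eta$, an a priori $H^1$ bound to confine the map to a compact convex set $S$, convex values from \ref{hyp:g3}, and closed graph via weak $H^1$/$L^2$ compactness together with \ref{hyp:g4} and strong $L^2$ convergence of the tension. The only cosmetic divergence is the topology on $S$: you endow it with the strong $L^2$ topology while the paper uses the (metrizable) weak $H^1$ topology, but by Rellich these coincide on $S$, so the two arguments are interchangeable.
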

	\begin{proof}
		It is enough to show that the following set-valued map admits a fixed point. We define $S:=\left\{\bm s\in  (H^1_{D,\ell(t^k)}(\Omega;\R^d))^2:\, \|s_i\|_{H^1(\Omega)}\le R\text{ for }i=1,2\right\}$, with $R>0$ to be chosen, and we consider $\mc R^k\colon S\to 2^S$ which maps a function $\bm s\in S$ to the set of functions $\bm u\in (H^1_{D,\ell(t^k)}(\Omega;\R^d))^2$ solving
		\begin{equation}\label{eq:eqs}
			\sum_{i=1}^2\int_\Omega \mathbb C_ie(u_i):e(\varphi_i) \d x=-\int_\Omega\mathcal T(x,\mathfrak g(s_1-s_2),\gamma^{k-1})\cdot (\eta_1\varphi_1-\eta_2\varphi_2) \d x,
		\end{equation}
		for all $\bm\varphi \in (H^1_{D,0_d}(\Omega;\R^d))^2$, as $\bm \eta\in (L^\infty(\Omega;\R^{m\times d}))^2$ such that $\eta_i(x)\in D\mathfrak g(s_1(x)-s_2(x))$ for a.e. $x\in\Omega$ varies.
		
		The set $S$ is clearly nonempty and convex; moreover, it is compact if endowed with the weak topology of $(H^1(\Omega;\R^d))^2$ (which is also metrizable on $S$, since it is bounded). Let us first show that $\mc R^k$ is valued in $S$, up to choosing $R$ large enough. Given $\bm u\in \mc R^k(\bm s)$, by taking as a test function $\bm \varphi=\bm u-\bm\ell(t^k)$ we infer
		\begin{align}\label{eq:H1bound2}
			\sum_{i=1}^2c_i\|e(u_i)\|^2_{L^2(\Omega)}&\le \sum_{i=1}^2\int_\Omega \mathbb C_ie(u_i):e(u_i-\ell(t^k))  \d x+\int_\Omega\mathbb C_ie(u_i):e(\ell(t^k)) \d x\nonumber\\
			&\le C\|\mc T\|_{L^\infty}\|\mathfrak g\|_{C^{0,1}(\R^d)}\sum_{i=1}^2\|u_i-\ell(t^k)\|_{L^2(\Omega)}+C\sum_{i=1}^2\|e(u_i)\|_{L^2(\Omega)}\\
			&\le C \left(\sum_{i=1}^2\|e(u_i)\|_{L^2(\Omega)}+1\right),\nonumber
		\end{align}
		where we exploited assumptions \ref{hyp:T1}, \ref{hyp:g2} together with Korn-Poincar\'e inequality. Notice that the constant $C$ does not depend on $\bm s$, thus the above chain of inequalities implies that $\|e(u_i)\|_{L^2(\Omega)}$ is uniformly bounded, and so again by Korn-Poincar\'e inequality one infers that $\bm u\in S$ if $R$ is large.
		
		Recalling that by \ref{hyp:g3} the set $D\mathfrak g(\delta)$ is convex for all $\delta\in \R^d$, we also easily deduce that $\mc R^k(\bm s)$ is convex as well (and clearly nonempty) for all $\bm s\in S$.
		
		In order to apply Theorem~\ref{Kakutani} and conclude, we just need to show that the graph of $\mc R^k$ is closed with respect to the weak topology of $(H^1(\Omega;\R^d))^2$. To this aim, consider sequences $\bm s^n\rightharpoonup \bm s$, $\bm u^n\rightharpoonup \bm u$ weakly in $(H^1(\Omega;\R^d))^2$, such that $\bm u^n\in \mc R^k(\bm s^n)$. In particular, let $\bm \eta^n\in (L^\infty(\Omega;\R^{m\times d}))^2$ such that $\eta^n_i(x)\in D\mathfrak g(s^n_1(x)-s^n_2(x))$ for a.e. $x\in\Omega$ satisfying \eqref{eq:eqs}  given by the definition of $\mc R^k$. Since $\mathfrak g$ is Lipschitz, then $\bm \eta^n$ is uniformly bounded in $L^\infty$, so without loss of generality we may assume that $\bm \eta^n\xrightharpoonup[]{*}\bm \eta$ weakly$^*$ in $(L^\infty(\Omega;\R^{m\times d}))^2$. Hence, by means of \ref{hyp:g4}, we deduce that $\eta_i(x)\in D\mathfrak g(s_1(x)-s_2(x))$ for a.e. $x\in\Omega$. Moreover, equation \eqref{eq:eqs} passes to the limit: indeed, on the left-hand side we may use weak convergence in $H^1$, while on the right-hand side we exploit the fact that 
		\begin{equation*}
			\mathcal T(\cdot,\mathfrak g(s^n_1-s^n_2),\gamma^{k-1})\eta^n_i\rightharpoonup\mathcal T(\cdot,\mathfrak g(s_1-s_2),\gamma^{k-1})\eta_i,\quad\text{weakly in }L^2(\Omega;\R^d),
		\end{equation*}
		since it is the product of the weak convergent sequence $\eta_i^n$ and of the strong convergent one $\mathcal T(\cdot,\mathfrak g(s^n_1-s^n_2),\gamma^{k-1})$ (this latter property follows by Dominated Convergence Theorem in view of \ref{hyp:T1} and \ref{hyp:g2}), and both of them are bounded in $L^\infty$.
		
		We have thus proved that $\bm u\in \mc R^k(\bm s)$, namely the graph of $\mc R^k$ is closed, and we conclude.
	\end{proof}
	
	From now on, the strategy is similar to the one presented in the previous section. We first show uniform bounds for the sequence of pairs $(\bm u^{k}, \gamma^{k})$, which yield convergence of the corresponding piecewise constant interpolants. We then prove that the obtained limit are indeed an equilibrium solution to the non potential-based cohesive interface model. 
	
	\begin{prop}
		There exists a constant $C>0$ independent of $\tau$ such that the bounds \eqref{eq:boundH1} and \eqref{eq:bounds} hold true.
	\end{prop}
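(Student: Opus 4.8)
The plan is to reproduce, in the non-variational setting, the same three-step chain of a priori estimates established for the energetic scheme in Lemma~\ref{lemma:b} and the two lemmas preceding it: first a uniform $H^1$ bound, then a uniform $L^\infty$ bound on $\div\C_i e(u_i^k)$, and finally the local $W^{1,p}$ and $C^{0,\alpha}$ bounds via the regularity Theorem~\ref{thm:regularity}. The only structural change is that, since $\bm u^k$ is no longer characterized by minimality, one tests the equilibrium identity \eqref{eq:weaksoldiscr} directly with suitable test functions.

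\emph{Step 1 (uniform $H^1$ bound).} For $k=0$ there is nothing to prove, $\bm u^0$ being a given element of $H^1(\Omega;\R^d)^2$. For $k\ge 1$ one tests \eqref{eq:weaksoldiscr} with $\bm\varphi=\bm u^k-\bm\ell(t^k)\in (H^1_{D,0_d}(\Omega;\R^d))^2$, exactly as in the proof of Corollary~\ref{cor:fixedpoint} (see the chain \eqref{eq:H1bound2}). Using the coercivity \ref{hyp:C5}, the boundedness of $\mathcal T$ from \ref{hyp:T1}, the bound $\|\eta_i^k\|_{L^\infty(\Omega)}\le\|\mf g\|_{C^{0,1}(\R^d)}$ following from \ref{hyp:g2}, the uniform bound on $\|e(\ell(t^k))\|_{L^2(\Omega)}$ guaranteed by \eqref{eq:externalloading}, and the Korn--Poincar\'e inequality, one gets $\sum_{i=1}^2 c_i\|e(u_i^k)\|_{L^2(\Omega)}^2\le C\big(\sum_{i=1}^2\|e(u_i^k)\|_{L^2(\Omega)}+1\big)$ with $C$ independent of $\tau$ and $k$, hence \eqref{eq:boundH1}.

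\emph{Step 2 (uniform $L^\infty$ bound on the divergence).} Testing \eqref{eq:weaksoldiscr} with one component of $\bm\varphi$ in $H^1_0(\Omega;\R^d)$ and the other equal to $0_d$, the right-hand side is estimated by $\|\mathcal T\|_{L^\infty}\,\|\mf g\|_{C^{0,1}(\R^d)}\,\|\varphi_i\|_{L^1(\Omega)}$ again via \ref{hyp:T1} and \ref{hyp:g2}, so that $|\langle\div\C_i e(u_i^k),\varphi_i\rangle_{H^1_0}|\le C\|\varphi_i\|_{L^1(\Omega)}$ with $C$ independent of $\tau$ and $k$; this is the precise analogue of \eqref{eq:boundLinfty}. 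Note that $\gamma^{k-1}$ enters only through the globally bounded tension $\mathcal T$, so none of the structural properties \ref{hyp:T3}--\ref{hyp:T6}, nor \ref{hyp:g3}--\ref{hyp:g4}, play any role in these bounds (the case $k=0$ being covered by the hypothesis that $\bm u^0$ solves \eqref{eq:weaksol} at $t=0$).

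\emph{Step 3 (local higher regularity).} With Steps 1--2 available, the argument of Lemma~\ref{lemma:b} applies verbatim: one covers $\Omega$ by finitely many open sets bilipschitz diffeomorphic to the unit cube, invokes Theorem~\ref{thm:regularity} on each (its hypotheses \ref{hyp:C1}, \ref{hyp:C3} and \eqref{eq:LegendreHadamard} being consequences of \ref{hyp:C1}--\ref{hyp:C5}) with $g=\div\C_i e(u_i^k)\in L^\infty(\Omega;\R^d)\subseteq L^{dp/(d+p)}(\Omega;\R^d)$, and controls $\|\nabla u_i^k\|_{L^2(\Omega)}$ by Step 1; arguing as in \cite[Proposition~2.5]{Riv} one then upgrades to $\bm u^k\in W^{1,p}_{\rm loc}(\Omega;\R^d)^2$ together with the uniform bound \eqref{eq:boundW1p}. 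The estimate \eqref{eq:boundHoldergamma} for $\gamma^k=\gamma^{k-1}\bm\vee\mf g(u_1^k-u_2^k)$ then follows exactly as in Lemma~\ref{lemma:b}, via the Sobolev embedding $W^{1,p}\hookrightarrow C^{0,\alpha}$ for $p>d/(1-\alpha)$, the Lipschitz continuity of $\mf g$, the regularity $\gamma^0\in C^{0,1}_{\rm loc}(\Omega;\R^m)$ from \eqref{eq:in1}, and the case-by-case bound on the H\"older seminorm of a finite maximum of uniformly $C^{0,\alpha}$-bounded functions. I expect no genuine obstacle here: because $\mathcal T$ is globally bounded, these estimates are in fact more direct than in the variational case (where the minimality inequality and \ref{hyp:phi2}--\ref{hyp:phi3} were needed); the only point requiring mild care is bookkeeping the constants, ensuring they depend only on the data ($\C_i$, $\|\mf g\|_{C^{0,1}}$, $\|\mathcal T\|_{L^\infty}$, $\|\ell\|_{W^{1,1}(0,T;H^1)}$, $p$, $\Omega'$) and not on $\tau$ or $k$, which is immediate from the uniformity built into \ref{hyp:T1} and \ref{hyp:g2}.
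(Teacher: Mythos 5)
Your proposal is correct and follows exactly the paper's strategy: test \eqref{eq:weaksoldiscr} with $\bm\varphi=\bm u^k-\bm\ell(t^k)$ as in \eqref{eq:H1bound2} for the $H^1$ bound, deduce the analogue of \eqref{eq:boundLinfty} directly from \eqref{eq:weaksoldiscr} via \ref{hyp:T1} and \ref{hyp:g2}, and then repeat the argument of Lemma~\ref{lemma:b} for the local $W^{1,p}$ and H\"older bounds. You spell out the steps in more detail than the paper, which compresses this into a three-sentence proof, but the reasoning is the same.
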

	\begin{proof}
		The $H^1$ bound \eqref{eq:boundH1} can be obtained as in \eqref{eq:H1bound2} by taking as a test function $\bm \varphi=\bm u^k-\bm\ell(t^k)$. Instead, the bounds \eqref{eq:bounds} follow by arguing exactly as in the proof of Lemma~\ref{lemma:b}: the validity of \eqref{eq:boundLinfty} in the current framework can be indeed directly checked from \eqref{eq:weaksoldiscr} by means of \ref{hyp:T1} and \ref{hyp:g2}.
	\end{proof}
	
	We now consider the piecewise constant interpolants $(\bm u^\tau,\gamma^\tau)$ defined as in \eqref{eq:piecewise}. Due to the just obtained uniform bounds, we immediately deduce that the same results of Proposition~\ref{prop:convsubs} hold true also in the current setting. We thus conclude the proof of Theorem~\ref{thm:existencenonpot} if we show that the limit pair $(\bm u(t),\gamma(t))$ solves the equilibrium equation \eqref{eq:weaksol} for all times.
	
	To this aim, we first introduce the piecewise constant interpolant $\bm \eta^\tau$ defined as in \eqref{eq:piecewise} where the discrete values $\bm\eta^k$ are given by \eqref{eq:sist2}. Notice that, by \ref{hyp:g3}, without loss of generality we may assume that
	\begin{equation*}
		\bm \eta^{\tau_j(t)}(t)\xrightharpoonup[j\to+\infty]{L^\infty(\Omega;\R^{m\times d})^2_*} \bm \eta(t),\quad\text{ for all }t\in [0,T].
	\end{equation*}
	Moreover, by using \ref{hyp:g4}, we also infer that $\eta_i(t,x)\in D\mathfrak g(u_1(t,x)-u_2(t,x))$ for a.e. $x\in\Omega$. Then, fix $t\in [0,T]$ and $\bm\varphi \in (H^1_{D,0_d}(\Omega;\R^d))^2$. By \eqref{eq:weaksoldiscr} and exploiting \ref{hyp:T2} together with the definition of $\gamma^k$ we observe that
	\begin{align*}
		&\sum_{i=1}^2\int_\Omega \mathbb C_ie(u_i^{\tau_j(t)}(t)):e(\varphi_i) \d x\\
		=&-\int_\Omega\mathcal T(x,\mathfrak g(u_1^{\tau_j(t)}(t)-u_2^{\tau_j(t)}(t)),\gamma^{\tau_j(t)}(t-{\tau_j(t)}))\cdot (\eta_1^{\tau_j(t)}(t)\varphi_1-\eta_2^{\tau_j(t)}(t)\varphi_2) \d x,\\
		=&-\int_\Omega\mathcal T(x,\mathfrak g(u_1^{\tau_j(t)}(t)-u_2^{\tau_j(t)}(t)),\gamma^{\tau_j(t)}(t))\cdot (\eta_1^{\tau_j(t)}(t)\varphi_1-\eta_2^{\tau_j(t)}(t)\varphi_2) \d x.
	\end{align*}
	By means of \eqref{eq:convtau}, and arguing as in the last part of Corollary~\ref{cor:fixedpoint}, we can pass to the limit the first and the last line above by weak convergence in $H^1$ and $L^2$, respectively, exploiting \ref{hyp:T1} and \ref{hyp:g2}. Thus, equation \eqref{eq:weaksol} is satisfied and Theorem~\ref{thm:existencenonpot} is proved.

	\bigskip
	
	\noindent\textbf{Acknowledgements.} The authors wish to thank F. Iurlano for fruitful discussions on the topic. F. R. is a member of GNAMPA--INdAM.	
	\bigskip

	{\small
		
		\vspace{15pt} (Francesco Freddi) Universit\`{a} degli Studi di Parma, Dipartimento di Ingegneria e Architettura, \par \textsc{Parco Area delle Scienze, 181/A, 43124, Parma, Italy}
		\par
		\textit{e-mail address}: \textsf{francesco.freddi@unipr.it}
		\par
		\textit{Orcid}: \textsf{https://orcid.org/0000-0003-0601-6022}
		\par
		
		\vspace{10pt} (Filippo Riva) Universit\`{a} Commerciale Luigi Bocconi, Dipartimento di Scienze delle Decisioni, \par
		\textsc{via Roentgen 1, 20136 Milano, Italy}
		\par
		\textit{e-mail address}: \textsf{filippo.riva@unibocconi.it}
		\par
		\textit{Orcid}: \textsf{https://orcid.org/0000-0002-7855-1262}
		\par
		
	}
	
\end{document}